\newcommand{\al}{\alpha}
\newcommand{\be}{\beta}
\newcommand{\ga}{\gamma}
\newcommand{\de}{\delta}
\newcommand{\la}{\lambda}
\newcommand{\om}{\omega}
\newcommand{\eps}{\varepsilon}
\newcommand{\vv}{\varphi}
\theoremstyle{plain}
\numberwithin{equation}{section}
\newtheorem{thm}{Theorem}[section]
\newtheorem{lem}[thm]{Lemma}
\newtheorem{prop}[thm]{Proposition}
\newtheorem{cor}[thm]{Corollary}
\theoremstyle{definition}
\newtheorem{example}[thm]{Example}
\newtheorem{ip}[thm]{Inverse Problem}
\theoremstyle{remark}
\newtheorem{remark}[thm]{Remark}
\DeclareMathOperator*{\Res}{Res}
\begin{document}

\begin{center}
{\Large\bf Uniform stability of the inverse problem \\[0.2cm] for the non-self-adjoint Sturm-Liouville operator}
\\[0.5cm]
{\bf Natalia P. Bondarenko}
\end{center}

\vspace{0.5cm}

{\bf Abstract.} In this paper, we develop a new approach to investigation of the uniform stability for inverse spectral problems. We consider the non-self-adjoint Sturm-Liouville problem that consists in the recovery of the potential and the parameters of the boundary conditions from the eigenvalues and the generalized weight numbers. The special case of simple eigenvalues, as well as the general case with multiple eigenvalues are studied. We find various subsets in the space of spectral data, on which the inverse mapping is Lipschitz continuous, and obtain the corresponding unconditional uniform stability estimates. Furthermore, the conditional uniform stability of the inverse problem under a priori restrictions on the potential is studied. In addition, we prove the uniform stability of the inverse problem by the Cauchy data, which are convenient for numerical reconstruction of the potential and for applications to partial inverse problems.

\medskip

{\bf Keywords:} inverse spectral problems; non-self-adjoint Sturm-Liouville operator; uniform stability; method of spectral mappings.

\medskip

{\bf AMS Mathematics Subject Classification (2020):} 34A55 34B05 34B09 34L40 
  
\vspace{1cm}

\section{Introduction} \label{sec:intr}

This paper deals with the following non-self-adjoint Sturm-Liouville problem $L = L(q, h, H)$:
\begin{gather} \label{eqv}
-y'' + q(x) y = \la y, \quad x \in (0, \pi), \\ \label{bc}
y'(0) - h y(0) = 0, \quad y'(\pi) + H y(\pi) = 0,
\end{gather}
where $q \in L_2(0,\pi)$ is a complex-valued function, $h$ and $H$ are complex constants, and $\la$ is the spectral parameter. We consider an inverse spectral problem that consists in the recovery of the parameters $q$, $h$, and $H$ from the eigenvalues $\{ \la_n \}_{n \ge 1}$ and the generalized weight numbers $\{ \al_n \}_{n \ge 1}$. This paper is focused on the study of the uniform stability for this inverse problem. We study the case of simple eigenvalues, as well as the general case when a finite number of multiple eigenvalues are possible.
Our goal is to develop a general approach to investigating the uniform stability of inverse spectral problems for various classes of operators, including non-self-adjoint ones with multiple eigenvalues. 

Inverse Sturm-Liouville problems and their generalizations have been studied for more than seventy years (see the monographs \cite{Lev84, PT87, FY01, Mar11, Krav20} and references therein). Such problems arise in quantum and classical mechanics, geophysics, acoustics, nanotechnology, and other applications. The major part of the research on inverse spectral theory is concerned with self-adjoint operators. Anyway, several approaches to the non-self-adjoint Sturm-Liouville problems have also been developed, see \cite{FY01} for the case of simple eigenvalues and \cite{Tkach02, But07, AHM08} for the case of multiplicities. Nevertheless, the inverse Sturm-Liouville theory still contains open questions, some of which are addressed in this paper.

In the fundamental study \cite{Borg46}, Borg has proved the uniqueness
and the local solvability of the inverse problem, which consists in determining the Sturm-Liouville potential from two spectra. Gelfand and Levitan \cite{GL51} developed a method for reconstruction of the potential from the spectral data (spectral function), which are equivalent to Borg's two spectra. Basing on the Gelfand-Levitan method, the following necessary and sufficient conditions on the spectral data $\{ \la_n, \al_n \}_{n \ge 1}$ of the \textit{self-adjoint} problem \eqref{eqv}--\eqref{bc} (i.e. for real-valued $q$, $h$, and $H$) have been obtained (see, e.g., \cite[Theorem 1.3.2]{FY01}):
\begin{gather} \label{sdself}
\la_n \in \mathbb R, \quad \la_n \ne \la_k \: (n \ne k), \quad \al_n > 0, \quad n \ge 1, \\
\label{asympt}
\rho_n := \sqrt{\la_n} = n - 1 + \frac{\om}{\pi n} + \frac{\varkappa_n}{n}, \quad
\al_n = \frac{2}{\pi} + \frac{s_n}{n}, \quad \{ \varkappa_n \}, \, \{ s_n \} \in l_2,
\end{gather}
where
\begin{equation} \label{defom}
\om := h + H + \frac{1}{2} \int_0^{\pi} q(x) \, dx.
\end{equation}

The stability of inverse Sturm-Liouville problems has been investigated in \cite{FY01, Hoch77, McL88, BSY13, BK19, Bond20, BB17} and many other studies. However, the stability in the mentioned papers has a \textit{local} nature. In other words, those studies dealt with local perturbations of spectral data for a fixed operator, and constants in stability estimates depended on that operator. Such estimates are inconvenient for practical applications, since in practice an exact solution is typically unknown. Therefore, \textit{uniform stability} estimates are needed. Note that the inverse Sturm-Liouville problem cannot be uniformly stable for all the collections of the spectral data satisfying \eqref{sdself} and \eqref{asympt}. Indeed, if $\la_n$ tends to $\la_{n+1}$ or $\al_n$ tends to zero, then the norm of the potential $q$ can grow infinitely. Thus, it is important to find such sets of spectral data, on which the uniform stability holds. We distinguish \textit{unconditional} uniform stability, for which some restrictions are imposed directly on the spectral data, and \textit{conditional} uniform stability, for which the potential is required to fulfill some a priori bounds.

Unconditional uniform stability for the first time was obtained by Marchenko and Ostrovskii \cite{MO80} for the periodic inverse problem. In 2010, Savchuk and Shkalikov \cite{SS10} have proved the uniform stability for the recovery of the self-adjoint Sturm-Liouville operators with distribution potentials of the Sobolev spaces $W_2^{\alpha}$, $\al > -1$, from the two spectra and from the spectral data. Note that the results of \cite{SS10} appeared to be new even for regular potentials $q \in L_2$. Later on, the uniform stability of inverse spectral problems for Sturm-Liouville and Dirac operators has been obtained by Hryniv \cite{Hryn11-1, Hryn11-2} by using a different method.

Uniform stability of inverse spectral problems is closely related to the recovery of operators from finite sets spectral data. The results of Savchuk and Shkalikov \cite{SS10} allowed them to obtain estimates for approximation of a potential by finite data, which are convenient for practical use (see \cite{SS14, Sav16}). Reconstruction of the Sturm-Liouville operators on finite intervals by finite data was previously investigated by Ryabushko \cite{Ryab73}, Alekseev \cite{Al86}, Marletta and Weikard \cite{MW05}. Stability of inverse problems by finite data on the line in different statements was considered by Aktosun \cite{Akt87} and Hitrik \cite{Hit00}. A great contribution to the study of inverse Sturm-Liouville problems on the half-line by a finite part of the scattering data and of the spectral function has been made by Marchenko and his co-authors. In their works \cite{Mar68, LM69, MM70, Mar11}, the difference of two potentials was estimated under the assumption that a finite part of the corresponding spectral data are equal and under some a priori restrictions on the potentials. In recent years, those restrictions were weakened by Xu \cite{Xu21}. Moreover, the ideas of Marchenko were generalized to the matrix Sturm-Liouville operators with possibly multiple eigenvalues in \cite{XB22}. We also mention that stability of recovering the potential from a finite number of resonances was investigated by Marletta et al \cite{MSW09} for the half-line and by Bledsoe \cite{Bled12} for the line.

In the recent paper \cite{GMXA23}, uniform stability estimates have been obtained for the inverse Sturm-Liouville problem by two spectra in the non-self-adjoint case under a priori restriction of the potential norm. Furthermore, conditional uniform stability for the inverse scattering problem on the lasso-graph was investigated by Mochizuki and Trooshin \cite{MT19}. Some conditional uniform stability estimates in various $L_p$-norms were derived by Horv\'ath and Kiss \cite{HK10} for the inverse Sturm-Lioville problem by eigenvalues from several spectra. 

It is worth mentioning that the unconditional uniform stability of inverse spectral problems has been recently proved by Buterin \cite{But21} for integro-differential operators, by Buterin and Djuric \cite{BD22} for functional-differential operators with delay, and by Kuznetsova \cite{Kuz23} for the Sturm-Liouville-type operators with the so-called ``frozen argument''. However, the operators that are studied in \cite{But21, BD22, Kuz23} are non-local. Therefore, for solving the corresponding inverse problems, the classical methods (the Gelfand-Levitan method, the methods of spectral mappings, etc.) do not work, so the authors applied fundamentally different methods.

In this paper, we study the uniform stability of the inverse Sturm-Liouville problem in the non-self-adjoint case. First, we focus on the case of simple spectrum, and then consider the general case with multiplicities.
In the latter case, the approach of Buterin \cite{But07} is used for introducing the generalized spectral data. We find various subsets in the space of spectral data, on which the inverse mapping is Lipschitz continuous, and obtain the corresponding unconditional uniform stability estimates. Furthermore, the conditional uniform stability of the inverse problem under a priori restrictions on the potential is studied. In addition, we prove the uniform stability of the inverse problem by the so-called Cauchy data. Use of the Cauchy data is convenient for numerical reconstruction (see \cite{RS92, MPS94}) and for investigation of partial inverse problems (see \cite{Bond23}).

For investigating both conditional and unconditional uniform stability, we develop a general approach based on the method of spectral mappings \cite{FY01, Yur02}. This method consists in reducing a nonlinear inverse problem to a linear equation in a suitable Banach space. For the purposes of this paper, we present a modified construction of the main equation that allows us to analyze the uniform stability. Furthermore, we have found two new classes of spectral data, for which the main equation, as well as the inverse spectral problem, are uniquely solvable. For working with multiple eigenvalues, we develop the ideas of \cite{Bond20}, which allow us to obtain the uniform stability estimates for parameters of two problems having different eigenvalue multiplicities.

The method of spectral mappings is a universal tool in inverse spectral theory. By using this method, inverse problems for higher-order differential operators, for differential operators on graphs, for general first order systems, and for several other classes of differential operators have been solved (see, e.g., \cite{Yur02, Yur16} and references therein). Therefore, our approach, which is presented in this paper for the Sturm-Liouville problem \eqref{eqv}--\eqref{bc}, in the future can be applied to more general classes of differential operators, including self-adjoint and non-self-adjoint ones with multiple eigenvalues. 

The paper is organized as follows. In Section~\ref{sec:main}, we formulate the main results. In Sections~\ref{sec:maineq}--\ref{sec:cond}, the inverse Sturm-Liouville problem is studied for the case of simple eigenvalues. In Section~\ref{sec:maineq}, we derive the linear main equation, which is convenient for investigation of the uniform stability of the inverse spectral problem. In Section~\ref{sec:nsc}, the unique solvability of the main equation is proved for two new classes of spectral data in the non-self-adjoint case. In Section~\ref{sec:ubound}, we prove the uniform boundedness of the inverse problem solution. In Section~\ref{sec:ustab}, the uniform stability of the inverse problem is proved under upper bounds for the spectral data and for the norm of the inverse operator from the main equation. That theorem plays a crucial role for obtaining unconditional uniform stability in Section~\ref{sec:uncond} and conditional uniform stability in Section~\ref{sec:cond}. In Section~\ref{sec:mult}, we generalize our results to the case of multiple eigenvalues. Section~\ref{sec:Cauchy} is concerned with the uniform stability of the inverse problem by the Cauchy data.

Throughout the paper, we use the following \textbf{notations}:
\begin{itemize}
\item $\rho := \sqrt{\la}$, $\rho_n := \sqrt{\la_n}$, where $\arg \rho, \, \arg \rho_n \in \left[ -\frac{\pi}{2}, \frac{\pi}{2}\right)$, $n \ge 1$.
\item The prime $y'(x,\rho)$ denotes the derivative with respect to $x$ and the dot $\dot y(x, \rho)$, with respect to $\rho$.
\item In estimates, we denote by the same symbol $C$ various positive constants not depending on $x$, $\la$, $n$, $k$, etc. The notations $C = C(\Omega)$, $C = C(\Omega, K)$, etc. mean that $C$ depends on $\Omega$, on $\Omega$ and $K$, respectively, etc.
\item $\de_{jk}$ is the Kronecker delta.
\item We consider the standard functional spaces $L_2(0, \pi)$, $W_2^1[0,\pi]$ and the spaces $l_2$, $l_2^1$ of infinite sequences $X = \{ x_n \}_{n \ge 1}$ with the corresponding norms:
\begin{gather*}
\| f \|_{L_2} = \sqrt{\int_0^{\pi} |f(x)|^2 \, dx}, \qquad \| f \|_{W_2^1} = \sqrt{\| f \|_{L_2}^2 + \| f' \|_{L_2}^2}, \\
\| X \|_{l_2} = \sqrt{\sum_{n = 1}^{\infty} |x_n|^2}, \qquad
\| X \|_{l_2^1} = \sqrt{\sum_{n = 1}^{\infty} (n |x_n|)^2}.
\end{gather*}
\item The symbol $m$ denotes the Banach space of bounded infinite sequences $a = [a_{ni}]_{n \ge 1, \, i = 0, 1}$ with the norm $\| a \|_m = \sup_{n,i} |a_{ni}|$. The notation $\| . \|_{m \to m}$ is used for the norm in the space of bounded linear operators on $m$, and $I$ denotes the identity operator in $m$.
\item Along with the problem $L = L(q, h, H)$, we consider the problem $\tilde L = L(\tilde q, \tilde h, \tilde H)$ of the same form \eqref{eqv}--\eqref{bc} with the zero coefficients: $\tilde q(x) \equiv 0$, $\tilde h = \tilde H = 0$. We agree that, if a symbol $\gamma$ denotes an object related to $L$, then the symbol $\tilde \ga$ with tilde will denote the analogous object related to $\tilde L$. Similarly, the notations $L^{(1)} = L(q^{(1)}, h^{(1)}, H^{(1)})$ and $L^{(2)} = L(q^{(2)}, h^{(2)}, H^{(2)})$ are used for two problems of form \eqref{eqv}--\eqref{bc} with different coefficients. The upper index $^{(s)}$ means that the object belongs to the corresponding problem $L^{(s)}$, $s = 1, 2$.
\item Denote by $\mathcal S$ the set of sequences $S = \{ \rho_n, \al_n \}_{n \ge 1}$ such that $\rho_n, \al_n \in \mathbb C$, $\arg \rho_n \in [-\tfrac{\pi}{2}, \tfrac{\pi}{2})$, and the asymptotics \eqref{asympt} are valid for some $\om \in \mathbb C$ and $\{ \varkappa_n \}, \, \{ s_n \} \in l_2$. For two sequences $S^{(1)} = \{ \rho_n^{(1)}, \al_n^{(1)} \}_{n \ge 1}$ and $S^{(2)} = \{ \rho_n^{(2)}, \al_n^{(2)} \}_{n \ge 1}$ of $\mathcal S$, introduce the distance
\begin{equation} \label{defdist}
d(S^{(1)}, S^{(2)}) := \left( |\om^{(1)} - \om^{(2)}|^2 + \sum_{n \ge 1} \bigl( |\varkappa_n^{(1)} - \varkappa_n^{(2)}|^2 + |s_n^{(1)} - s_n^{(2)}|^2 \bigr) \right)^{1/2}
\end{equation}
\end{itemize}

\section{Main results} \label{sec:main}

Consider the boundary value problem $L = L(q, h, H)$ given by \eqref{eqv}--\eqref{bc}. It is well-known that $L$ has a countable set of eigenvalues $\{ \la_n \}_{n \ge 1}$. For convenience, introduce the spectral parameter $\rho := \sqrt{\la}$ and put $\rho_n := \sqrt{\la_n}$, $\arg \rho, \, \rho_n \in \left[ -\frac{\pi}{2}, \frac{\pi}{2}\right)$. Denote by $\varphi(x, \rho)$ the solution of equation \eqref{eqv} with $\la = \rho^2$ under the initial conditions $\vv(0, \rho) = 1$, $\vv'(0,\rho) = h$. Suppose that the eigenvalues $\{ \la_n \}_{n \ge 1}$ are simple. Then $\{ \varphi(x, \rho_n) \}_{n \ge 1}$ are the corresponding eigenfunctions, and the generalized weight numbers are correctly defined by the formula 
\begin{equation} \label{defal}
\al_n := \left( \int_0^{\pi} \varphi^2(x, \rho_n) \, dx\right)^{-1}.
\end{equation}

As in the self-adjoint case, the spectral data $\{ \rho_n, \al_n \}_{n \ge 1}$ satisfy the asymptotics \eqref{asympt} and uniquely specify $q$, $h$, and $H$ (see, e.g., \cite{FY01}). We begin with considering the corresponding inverse spectral problem for the case of simple eigenvalues:

\begin{ip} \label{ip:simp}
Given the spectral data $S = \{ \rho_n, \al_n \}_{n \ge 1}$, find $q$, $h$, and $H$.
\end{ip}

For solving Inverse Problem~\ref{ip:simp}, we derive the modified main equation $(I + \tilde R(x)) \psi(x) = \tilde \psi(x)$ by the method of spectral mappings. It is a linear equation in the Banach space $m$ for each fixed $x \in [0,\pi]$, $\psi(x)$ and $\tilde \psi(x)$ are elements of $m$, and $\tilde R(x)$ is a linear bounded operator in $m$ (see Section~\ref{sec:maineq} for details). The element $\tilde \psi(x)$ and $\tilde R(x)$ are constructed by using the spectral data $S$, and $\psi(x)$ is the unknown element, which is related to the problem parameters $q$, $h$, and $H$. Using the modified main equation, we obtain the following necessary and sufficient conditions for solvability of Inverse Problem~\ref{ip:simp}.

\begin{thm} \label{thm:nsc}
For complex numbers $\{ \rho_n, \al_n \}_{n \ge 1}$, $\arg \rho_n \in [-\frac{\pi}{2}, \tfrac{\pi}{2})$, to be the spectral data of a problem $L = L(q, h, H)$ with simple eigenvalues, the asymptotics \eqref{asympt} and the existence of a bounded inverse operator $(I + \tilde R(x))^{-1}$ for each fixed $x \in [0, \pi]$ are necessary and sufficient.
\end{thm}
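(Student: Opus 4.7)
My overall strategy is to prove both directions via the modified main equation $(I + \tilde R(x))\psi(x) = \tilde\psi(x)$ promised in Section~\ref{sec:maineq}, exploiting the symmetric roles of $L$ and $\tilde L$ in the construction. The element $\psi(x)$ is built from $\varphi(x,\rho_n)$, its derivative with respect to $\rho$ at $\rho_n$, and the weights $\alpha_n$; the operator $\tilde R(x)$ is an explicit kernel assembled from analogous data of $\tilde L$ together with the given numbers $\{\rho_n,\alpha_n\}$. Thus the main equation encodes the entire spectral correspondence and the question of invertibility of $I+\tilde R(x)$ becomes the central object.

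For the necessity direction, assume $\{\rho_n,\alpha_n\}_{n\ge1}$ are the spectral data of an admissible $L=L(q,h,H)$ with simple spectrum. The asymptotics \eqref{asympt} are obtained by the standard transformation-operator expansions of $\varphi(x,\rho)$ and of the characteristic function $\Delta(\rho)$, together with a Rouch\'e argument for the location of zeros and term-by-term differentiation for $\alpha_n=\bigl(-2\rho_n\dot\Delta(\rho_n)\beta_n\bigr)^{-1}$ or the equivalent integral formula \eqref{defal}. To show that $I+\tilde R(x)$ is invertible, I would verify by direct substitution that the element $\psi(x)$ constructed from the true $L$ satisfies the main equation, and then interchange the roles of $L$ and $\tilde L$ to derive the dual equation $(I+R(x))\tilde\psi(x)=\psi(x)$. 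A routine calculation using the bilinear structure of the kernels gives $(I+\tilde R(x))(I+R(x))=(I+R(x))(I+\tilde R(x))=I$ as operators on $m$, which establishes the two-sided bounded inverse for every $x\in[0,\pi]$.

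For the sufficiency direction, suppose only \eqref{asympt} and invertibility of $I+\tilde R(x)$ for each $x$. I would define $\psi(x):=(I+\tilde R(x))^{-1}\tilde\psi(x)$, read off from its components candidate functions $\varphi(x,\rho_n)$, and then reconstruct the coefficients via explicit formulas of the type
\[
h := \varphi'(0) \text{-type initial coefficient}, \quad
H := -\frac{\varphi'(\pi,\rho_n)}{\varphi(\pi,\rho_n)}\Big|_{\text{suitable }n}, \quad
q(x) := 2\frac{d}{dx}\bigl(\text{a series built from }\psi(x)\bigr),
\]
as is customary in the method of spectral mappings. The work is then to check (i) that the series defining $q$ converges in $L_2(0,\pi)$, using the $l_2$-tails in \eqref{asympt} and the uniform-in-$x$ boundedness of $(I+\tilde R(x))^{-1}$; (ii) that the reconstructed $\varphi(x,\rho_n)$ solve \eqref{eqv}--\eqref{bc} with these coefficients at $\lambda=\rho_n^2$; and (iii) that the spectral data of the reconstructed $L$ coincide with the prescribed $\{\rho_n,\alpha_n\}$, which reduces to a contour-integral/residue computation matching the singularities of the reconstructed Weyl-type function against those encoded in $\tilde R(x)$.

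The main obstacle I expect is step~(iii) of the sufficiency argument in the non-self-adjoint setting: one has to exclude extraneous eigenvalues of the reconstructed problem and to confirm that each prescribed $\rho_n$ is a \emph{simple} eigenvalue with weight exactly $\alpha_n$, without the benefit of positivity, orthogonality, or a priori spectral completeness. The present choice of the Banach space $m$ (rather than a weighted $\ell_2$-type space used in the self-adjoint theory of \cite{FY01}) is what makes this feasible, since uniform-in-$n$ control of $\psi(x)$ propagates cleanly through the contour integrals that equate residues to the prescribed data, and the hypothesis that $I+\tilde R(x)$ is invertible closes the loop by ruling out any ``ghost'' solution that could produce additional eigenvalues.
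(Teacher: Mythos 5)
Your proposal follows essentially the same route as the paper: necessity of the invertibility is obtained from the explicit inverse built out of the second spectral-mappings relation (the paper's \eqref{mainR}, which yields $(I+\tilde R(x))^{-1}=I-R(x)$ in \eqref{invR}), and sufficiency proceeds by solving the main equation, passing from $\psi_{ni}$ back to $\vv_{ni}$, and reconstructing $q$, $h$, $H$ from the series of Proposition~\ref{prop:recqhH}, with the asymptotics \eqref{asympt} guaranteeing convergence. One small correction: with the paper's sign conventions for the kernel $R_{ni,kj}$ the operator identity reads $(I+\tilde R(x))(I-R(x))=I$, not $(I+\tilde R(x))(I+R(x))=I$, so the dual relation obtained by interchanging $L$ and $\tilde L$ carries a minus sign on $R(x)$.
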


Next, we find two new classes of spectral data, for which the invertibility of the operator $(I + \tilde R(x))$ is guaranteed.

\begin{thm} \label{thm:suff}
Suppose that $\{ \rho_n, \al_n \}_{n \ge 1} \in \mathcal S$, $\{ \rho_n^2 \}_{n \ge 1}$ are real and distinct, and $\{ \al_n \}_{n \ge 1}$ fulfill one of the following conditions: 
\begin{enumerate}
\item $\al_n \ne 0$, $\arg \al_n \in [\al_{min}, \al_{max}]$, $n \ge 1$, where $\al_{max} - \al_{min} < \pi$.
\item There exists $\mathcal Z \subset \mathbb N$ such that $\al_n < 0$ for $n \in \mathcal Z$ and, for $n \in \mathbb N \setminus \mathcal Z$, the values $\{ \mbox{Im}\, \al_n \}$ are either all positive or all negative.
\end{enumerate}
Then, $\{ \rho_n, \al_n \}_{n \ge 1}$ are the spectral data of some problem $L(q, h, H)$.
\end{thm}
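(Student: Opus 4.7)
The plan is to invoke Theorem~\ref{thm:nsc}: the asymptotic portion of the hypothesis is built into $\{\rho_n,\al_n\}\in\mathcal S$, so it remains to prove, for every fixed $x\in[0,\pi]$, the invertibility of $I+\tilde R(x)$ on $m$. Within the method of spectral mappings this reduces to showing that the homogeneous main equation $(I+\tilde R(x))\psi(x)=0$ has only the trivial solution, the standard compactness/Fredholm-type argument for operators of this specific form (see \cite{FY01, Yur02}) then upgrading injectivity to full invertibility.

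Arguing by contradiction, suppose there exist $x_0\in[0,\pi]$ and a non-trivial $\psi(x_0)\in m$ with $(I+\tilde R(x_0))\psi(x_0)=0$. Following the standard scheme, I would construct from the components of $\psi(x_0)$ and the model solution $\tilde\vv(x,\rho)$ an entire function $f(\la)$, not identically zero, of Paley--Wiener type governed by \eqref{asympt}, together with a Parseval-type identity
\[
\sum_{n\ge1}\al_n\,|f(\la_n)|^2 \;=\; 0.
\]
The squared-modulus form, rather than a bare $f(\la_n)^2$, is exactly what the assumption $\rho_n^2\in\mathbb R$ is there to secure: reality of the spectrum converts the bilinear pairing produced by the method of spectral mappings into a Hermitian one via the natural involution $\rho\mapsto\bar\rho$.

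With this identity in place, conditions (1) and (2) are each tailored to force $f(\la_n)=0$ for every $n$. In case (1), since $\al_{max}-\al_{min}<\pi$, one may choose $\theta\in\bigl(-\tfrac{\pi}{2}-\al_{min},\,\tfrac{\pi}{2}-\al_{max}\bigr)$, whence $\mathrm{Re}\bigl(e^{i\theta}\al_n\bigr)>0$ for all $n$; taking real parts of $e^{i\theta}$ times the identity gives $f(\la_n)=0$ for every $n$. In case (2), one first takes imaginary parts: the contribution from $n\in\mathcal Z$ drops because $\al_n\in\mathbb R$ there, and the constant-sign assumption on $\mathrm{Im}\,\al_n$ outside $\mathcal Z$ forces $f(\la_n)=0$ for $n\notin\mathcal Z$; what remains is $\sum_{n\in\mathcal Z}\al_n|f(\la_n)|^2=0$ with $\al_n<0$, yielding $f(\la_n)=0$ on $\mathcal Z$ as well. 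In either case $f$ vanishes on the whole sequence $\{\la_n\}$, and the growth/counting estimate from \eqref{asympt} forces $f\equiv 0$, which unwinds to $\psi(x_0)=0$, contradicting the choice of a non-trivial solution.

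The main obstacle I anticipate is the derivation of the squared-modulus Parseval identity itself in the genuinely non-self-adjoint setting: in the self-adjoint treatment this identity is essentially immediate, but here $q$ is complex-valued and the identity must be extracted by pairing the formal series arising from $\psi(x_0)$ against its image under the antiholomorphic symmetry induced by $\la_n\in\mathbb R$, equivalently by identifying a suitable companion problem for $\bar q$ whose spectrum also equals $\{\la_n\}$. Once that step is executed cleanly, cases (1) and (2) become elementary consequences of the sign/phase hypotheses, and Theorem~\ref{thm:nsc} closes the argument.
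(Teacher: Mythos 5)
Your proposal is correct and follows essentially the same route as the paper: reduce to the triviality of the homogeneous main equation, derive the identity $\sum_{n}\al_n|\ga_n|^2=0$ (the paper gets it by applying the Residue Theorem to $\overline{\ga(\overline{\la})}G(\la)$, exactly the Hermitian pairing via $\rho\mapsto\bar\rho$ that you describe), and then run the same two case analyses — rotation into a half-plane for Condition 1, imaginary parts followed by the negative reals for Condition 2. The step you flag as the main obstacle is handled in the paper precisely by following the scheme of Lemma 1.6.6 in \cite{FY01}, as you anticipated.
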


Condition 1 of Theorem~\ref{thm:suff} holds, in particular, when $\al_n > 0$ for all $n \ge 1$. Therefore, Theorem~\ref{thm:suff} generalizes the classical result by Gelfand and Levitan \cite{GL51} to non-self-adjoint Sturm-Liouville operators.

Proceed to the uniform stability of Inverse Problem~\ref{ip:simp}. For a fixed $\Omega > 0$, define the balls in the space $\mathcal S$ of spectral data:
\begin{equation} \label{defB}
B_{\Omega} := \{ S \in \mathcal S \colon d(S, \tilde S) \le \Omega \}, \quad 
\mathring{B}_{\Omega} := \{ S \in B_{\Omega} \colon \omega = 0 \}.
\end{equation}
Note that due to the asymptotics \eqref{asympt}, $\om$ can be found from $\{ \rho_n \}_{n \ge 1}$ by the formula
$$
\om = \pi \lim_{n \to \infty} n (\rho_n - n + 1).
$$
Next, for each $S \in \mathcal S$, we can construct the operator $(I + \tilde R(x))$. For fixed $\Omega > 0$ and $K > 0$, introduce the set
\begin{equation} \label{defBK}
\mathring{B}_{\Omega, K} := \left\{ S \in \mathring{B}_{\Omega} \colon \| (I + \tilde R(x))^{-1} \|_{m \to m} \le K \right\}.
\end{equation}

In particular, the condition in \eqref{defBK} means that, for $S \in \mathring{B}_{\Omega,K}$, the corresponding operator $(I + \tilde R(x))$ has a bounded inverse for each $x \in [0,\pi]$. Therefore, by Theorem~\ref{thm:nsc}, the inverse problem is uniquely solvable, that is, there exist $q \in L_2(0, \pi)$, $h,H \in \mathbb C$ such that $S$ are the spectral data of $L(q, h, H)$. Our central result on the uniform stability is the following theorem.

\begin{thm} \label{thm:uni}
Suppose that $S^{(1)}, S^{(2)} \in \mathring{B}_{\Omega, K}$. Then, the corresponding solutions $(q^{(1)}, h^{(1)}, H^{(1)})$ and $(q^{(2)}, h^{(2)}, H^{(2)})$ of the inverse problem satisfy the uniform estimate
\begin{equation} \label{uni}
\| q^{(1)} - q^{(2)} \|_{L_2} + |h^{(1)} - h^{(2)}| + |H^{(1)} - H^{(2)}| \le C d(S^{(1)}, S^{(2)}),
\end{equation}
where $C = C(\Omega, K)$. Thus, the mapping $S \mapsto (q, h, H)$ is Lipschitz continuous on $\mathring{B}_{\Omega, K}$.
\end{thm}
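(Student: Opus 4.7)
The strategy is to compare the two problems through the common reference $\tilde L$ underlying the main equation from Section~\ref{sec:maineq}. Writing $(I + \tilde R^{(s)}(x))\psi^{(s)}(x) = \tilde\psi^{(s)}(x)$ for $s = 1, 2$ and subtracting yields
$$(I + \tilde R^{(1)}(x))(\psi^{(1)}(x) - \psi^{(2)}(x)) = (\tilde\psi^{(1)}(x) - \tilde\psi^{(2)}(x)) - (\tilde R^{(1)}(x) - \tilde R^{(2)}(x))\psi^{(2)}(x).$$
Since $S^{(1)} \in \mathring B_{\Omega, K}$, the operator $(I + \tilde R^{(1)}(x))^{-1}$ has $m \to m$ norm at most $K$ for every $x \in [0,\pi]$, so inverting reduces the problem to estimating the two right-hand factors and the norm $\|\psi^{(2)}(x)\|_m$.

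Next I would bound each ingredient separately. The vector $\tilde\psi^{(s)}(x)$ and the operator $\tilde R^{(s)}(x)$ are defined explicitly as series in the spectral data and the unperturbed solutions $\tilde\varphi(x, \tilde\rho_n)$, and their entries are affine in $\rho_n^{(s)}, \alpha_n^{(s)}$ up to factors that are uniformly bounded for $S^{(s)} \in B_\Omega$ by the asymptotics \eqref{asympt}. Applying a Cauchy-Schwarz argument for vectors in $m$ and a Schur-type test for operators on $m$, one should obtain
$$\|\tilde\psi^{(1)}(x) - \tilde\psi^{(2)}(x)\|_m + \|\tilde R^{(1)}(x) - \tilde R^{(2)}(x)\|_{m \to m} \le C(\Omega)\, d(S^{(1)}, S^{(2)})$$
uniformly on $[0,\pi]$. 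The remaining factor $\|\psi^{(2)}(x)\|_m$ is controlled by the uniform boundedness established in Section~\ref{sec:ubound}, giving $\|\psi^{(2)}(x)\|_m \le C(\Omega, K)$. Combining these bounds we arrive at
$$\|\psi^{(1)}(x) - \psi^{(2)}(x)\|_m \le C(\Omega, K)\, d(S^{(1)}, S^{(2)}), \qquad x \in [0,\pi].$$

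Finally, I would convert this $m$-norm bound into the desired estimate via the reconstruction formulas that express $(q, h, H)$ through $\psi$. The constants $h$ and $H$ are recovered from specific coordinates of $\psi$ at $x = 0$ and $x = \pi$, combined with the already-controlled quantity $\omega^{(1)} - \omega^{(2)}$ (which equals $\pi \lim_n n(\rho_n^{(1)} - \rho_n^{(2)})$ plus a term in $d(S^{(1)}, S^{(2)})$), so their Lipschitz estimates are immediate. The potential reconstruction formula, however, is a series bilinear in $\psi(x)$ that must be differentiated in $x$ before being evaluated; this brings a factor $\rho_n$ into each term and forces one to pass from the $m$-norm bound to an $l_2^1$-type estimate on $\psi^{(1)} - \psi^{(2)}$. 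I expect this derivative bookkeeping to be the main obstacle: after isolating a principal part whose contribution telescopes out by virtue of the structure of the main equation, one must verify that the remaining terms gain a $1/n$ factor from \eqref{asympt}, so that the tail series converges and, after integrating in $x \in (0,\pi)$, yields an $L_2$-bound on $q^{(1)} - q^{(2)}$ with constant $C(\Omega, K)$.
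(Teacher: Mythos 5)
Your skeleton coincides with the paper's: subtract the two main equations, invert $(I+\tilde R^{(1)}(x))$ using the bound $K$, estimate $\tilde\psi^{(1)}-\tilde\psi^{(2)}$ and $\tilde R^{(1)}-\tilde R^{(2)}$, and push the result through the reconstruction of $\eps$ and \eqref{recqhH}. But the proof as written stops exactly where the real work begins, and the intermediate conclusion you do reach is too weak to finish. A sup-norm bound $\|\psi^{(1)}(x)-\psi^{(2)}(x)\|_m\le C\,d(S^{(1)},S^{(2)})$ cannot produce an $L_2$ bound on $q^{(1)}-q^{(2)}=-2\frac{d}{dx}(\eps^{(1)}-\eps^{(2)})$: differentiating the bilinear series \eqref{defeps} brings a factor $\rho_k\sim k$ into each term, and a uniformly small but non-decaying sequence of coefficients gives a divergent series. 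What is actually needed is the \emph{element-wise} decay $|\psi^{(1)}_{ni}-\psi^{(2)}_{ni}|\le C(\zeta_n+Z\eta_n)$ and $|\tfrac{d}{dx}(\psi^{(1)}_{ni}-\psi^{(2)}_{ni})|\le C(n\zeta_n+Z)$ of \eqref{estdifpsi}, where $\{\zeta_n\},\{\eta_n\}\in l_2$. These in turn require element-wise estimates \eqref{estdifR}--\eqref{estdifR2} on $R^{(1)}_{ni,kj}-R^{(2)}_{ni,kj}$ carrying the weights $1/(|n-k|+1)$, $\eta_n$, $\eta_k$; the paper obtains them not from the resolvent identity in operator norm (which only gives \eqref{estdifRop}) but from the componentwise identities $R=\tilde R-R\tilde R$ and $R=\tilde R-\tilde R R$ via a two-pass bootstrap (a rough bound \eqref{estdifRrough} first, then the refined one). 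Your operator-norm argument discards precisely this information.

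Two further points you wave at but do not resolve. First, even with the element-wise bounds, the terms of \eqref{defeps} must be grouped so that the $k0$ and $k1$ contributions partially cancel; this is the content of \eqref{estvvpsi}--\eqref{estvvpsip}, which compare $\vv^{(1)}_{n0}-\vv^{(1)}_{n1}-\vv^{(2)}_{n0}+\vv^{(2)}_{n1}$ with $(\rho_n^{(1)}-\rho_n^{(2)})\psi^{(2)}_{n0}$ and gain an extra factor $\xi_n^{(1)}$. Second, your ``principal part that telescopes out'' does not telescope: the model part $\tfrac{d}{dx}(\tilde\eps^{(1)}-\tilde\eps^{(2)})$ is a trigonometric series with coefficients $s_k^-$ and $\varkappa_k^-$ that is \emph{not} absolutely convergent; it is bounded in $L_2$ (not uniformly) by Bessel's inequality, which is why the final estimate lands in $L_2(0,\pi)$ rather than $C[0,\pi]$. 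Until the element-wise estimates of Lemma~\ref{lem:estdif} and the splitting $\eps=\tilde\eps+\hat\eps$ of Lemma~\ref{lem:esidifeps} are supplied, the proposal is an accurate plan rather than a proof.
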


Finding specific formulas for the constant $C = C(\Omega, K)$ in Theorem~\ref{thm:uni} and in the subsequent theorems is technically complicated and outside the scope of this paper.

Relying on Theorem~\ref{thm:uni}, we obtain several results on unconditional and conditional uniform stability of Inverse Problem~\ref{ip:simp}.

For $\Omega > 0$ and $\de \in (0, 1)$, denote by $\mathcal V_{\Omega, \de}$ the set of sequences $S = \{ \rho_n, \al_n \}_{n \ge 1} \in B_{\Omega}$ satisfying the additional requirements
\begin{gather} \label{reqrho}
\rho_n \in \mathbb R, \quad \rho_{n+1} - \rho_n \ge \de, \quad |\al_n| \ge \de, \quad n \ge 1, \\ \label{reqalpha}
\sup_{n \ge 1} \arg \al_n - \inf_{n \ge 1} \arg \al_n \le \pi - \de.
\end{gather}

Let $\tau = \{ \tau_n \}_{n \ge 1} \in l_2^1$ be a sequence of positive numbers. For $\Omega > 0$ and $\tau$, denote by $\mathcal V^+_{\Omega, \tau}$ ($\mathcal V^-_{\Omega, \tau}$) the set of sequences $S \in B_{\Omega}$ satisfying \eqref{reqrho} and the following requirement: there exists a set $\mathcal Z$ such that 
\begin{equation} \label{reqalpha2}
\al_n < 0, \: n \in \mathcal Z, \qquad
\mbox{Im}\, \al_n \ge \tau_n \:\: (-\mbox{Im}\,\al_n \ge \tau_n), \quad n \not\in \mathcal Z.
\end{equation}
In view of the asymptotics \eqref{asympt}, such a set $\mathcal Z$ is finite.

Clearly, sequences $S$ in $\mathcal V_{\Omega, \de}$ and in $\mathcal V_{\Omega, \tau}^{\pm}$ fulfill the Conditions 1 and 2 of Theorem~\ref{thm:suff}, respectively. Therefore, Inverse Problem~\ref{ip:simp} is uniquely solvable on these sets. Moreover, we prove the following uniform stability theorem.

\begin{thm} \label{thm:V}
Suppose that $S^{(1)}, S^{(2)} \in \mathcal V_{\Omega, \de}$ $(\mathcal V^{\pm}_{\Omega, \tau})$. Then, the corresponding solutions $(q^{(1)}, h^{(1)}, H^{(1)})$ and $(q^{(2)}, h^{(2)}, H^{(2)})$ of Inverse Problem~\ref{ip:simp} satisfy the uniform estimate \eqref{uni}, where $C = C(\Omega, \de)$ $(C = C(\Omega, \tau))$. In other words, the mapping $S \mapsto (q, h, H)$ is Lipschitz continuous on $\mathcal V_{\Omega, \de}$ $(\mathcal V^{\pm}_{\Omega, \tau})$.
\end{thm}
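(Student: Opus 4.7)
The plan is to deduce Theorem~\ref{thm:V} from Theorem~\ref{thm:uni} by showing that, after a scalar shift of the potential, the sets $\mathcal V_{\Omega,\delta}$ and $\mathcal V^{\pm}_{\Omega,\tau}$ embed into some ball $\mathring B_{\Omega_1, K}$ whose parameters depend only on $(\Omega,\delta)$ and $(\Omega,\tau)$, respectively. Two ingredients are needed: a reduction to the case $\omega = 0$, and a uniform bound $\|(I + \tilde R(x))^{-1}\|_{m \to m} \le K$ across the whole class.

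For the reduction, I would first note that since $\{\rho_n\} \subset \mathbb R_{\ge 0}$ (recall $\arg \rho_n \in [-\tfrac{\pi}{2},\tfrac{\pi}{2})$), the limit $\omega/\pi = \lim_n n(\rho_n - n + 1)$ is real, so $\omega \in \mathbb R$ with $|\omega| \le \Omega$. Setting $c := 2\omega/\pi$ and $\rho_n' := \sqrt{\rho_n^2 - c}$ (branch fixed by $\arg \rho_n' \in [-\tfrac{\pi}{2},\tfrac{\pi}{2})$; only finitely many values fail to be positive real), the sequence $S' := \{\rho_n', \alpha_n\}_{n \ge 1}$ is precisely the spectral data of the translated problem $L(q-c,h,H)$. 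A direct computation from \eqref{asympt} and \eqref{defdist} yields $\omega' = 0$, $S' \in B_{\Omega_1}$ with $\Omega_1 = \Omega_1(\Omega)$, and $d(S'^{(1)}, S'^{(2)}) \le C(\Omega)\, d(S^{(1)}, S^{(2)})$. Crucially, the hypotheses of Theorem~\ref{thm:suff} are preserved under this shift: the $\alpha_n$ are unchanged, $\{\rho_n'^2\}$ remain real and distinct, and Condition~1 (sector of length $\le \pi - \delta$) or Condition~2 (sign/imaginary-part split with the $\tau$-lower bound, and $\mathcal Z$ finite by \eqref{asympt}) carries over. Hence $(I + \tilde R'(x))^{-1}$ exists pointwise for every $x \in [0,\pi]$.

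The hard step is turning this pointwise invertibility into the uniform estimate $\|(I + \tilde R'(x))^{-1}\|_{m \to m} \le K(\Omega,\delta)$ (respectively $K(\Omega,\tau)$). I would argue by compactness-contradiction: were no such $K$ available, one could extract $S^{(j)}$ in the class, $x^{(j)} \in [0,\pi]$, and unit vectors $f^{(j)} \in m$ with $\|(I + \tilde R^{(j)}(x^{(j)})) f^{(j)}\|_m \to 0$. The $l_2$-decay of $\{\varkappa_n\}, \{s_n\}$ guaranteed by \eqref{asympt}, uniform over the class, allows one to split each $\tilde R^{(j)}(x^{(j)})$ into a tail of arbitrarily small $m \to m$ norm plus a finite-rank principal part; a diagonal extraction then gives $S^{(j)} \to S^*$ and $x^{(j)} \to x^*$, with $\tilde R^{(j)}(x^{(j)}) \to \tilde R^*(x^*)$ in operator norm. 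Closedness of the defining constraints of $\mathcal V_{\Omega,\delta}$ (respectively $\mathcal V^{\pm}_{\Omega,\tau}$) places $S^*$ in the same set, so Theorem~\ref{thm:suff} gives invertibility of $I + \tilde R^*(x^*)$, contradicting the near-kernel behaviour of $f^{(j)}$. Once $K$ is in hand, Theorem~\ref{thm:uni} applied to $S'^{(1)}, S'^{(2)} \in \mathring B_{\Omega_1, K}$ yields the Lipschitz bound for $(q'^{(s)}, h^{(s)}, H^{(s)})$; restoring $q^{(s)} = q'^{(s)} + 2\omega^{(s)}/\pi$ contributes an error proportional to $|\omega^{(1)} - \omega^{(2)}|$, absorbed into $d(S^{(1)}, S^{(2)})$ via \eqref{defdist}, and composing with the Lipschitz shift of the previous paragraph produces \eqref{uni}. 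The main obstacle is precisely this uniform invertibility step: extracting a quantitative $K$ from the qualitative statement of Theorem~\ref{thm:suff} across an infinite-dimensional family, for which no naive compactness of $\mathcal V_{\Omega,\delta}$ in the $d$-topology is available and one must exploit both the $l_2$-tail control and the closedness of the sector/sign constraints on $\alpha_n$.
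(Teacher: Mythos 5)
Your proposal follows essentially the same route as the paper: a constant shift of the potential to reduce to $\om = 0$, a compactness--contradiction argument establishing the uniform bound $\| (I + \tilde R(x))^{-1} \|_{m \to m} \le K(\Omega, \de)$ (the paper's Lemma~\ref{lem:V}, which uses weak compactness of the ball in $l_2^1$ and the compact embedding $l_2^1 \hookrightarrow l_2$ to get operator-norm convergence of $\tilde R^{(k)}(x)$, exactly the tail-plus-finite-part splitting you describe), and then an application of Theorem~\ref{thm:uni}. The only point you gloss over, which the paper treats in Remark~\ref{rem:V}, is that the separation condition $\rho_{n+1} - \rho_n \ge \de$ is not literally preserved by the shift and must be reformulated in terms of $\la_n$ for finitely many indices; this is a minor technicality and does not affect the correctness of your argument.
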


Confining ourselves by positive values $\{ \al_n \}_{n \ge 1}$ in Theorem~\ref{thm:V}, we obtain the analogous result to Theorem 3.8 by Savchuk and Shkalikov \cite{SS10} for $q \in L_2(0,\pi)$. The Robin boundary conditions \eqref{bc} in this paper are different from the Dirichlet boundary conditions in \cite{SS10}, but this difference is not principal. Similarly to the results of \cite{SS10}, Theorem~\ref{thm:V} can be extended to potentials in the scale of Sobolev spaces $W_2^{\theta}$, $\theta > -1$, but this is outside the scope of this paper.

Proceed to the conditional uniform stability. For a fixed $Q > 0$, introduce the ball
\begin{equation} \label{defPQ}
P_Q := \left\{ (q, h, H) \in L_2(0,\pi) \times \mathbb C \times \mathbb C \colon \| q \|_{L_2} + |h| + |H| \le Q \right\}.
\end{equation}

For $Q > 0$ and $A > 0$, we consider the class $P_{Q, A}$ of triples $(q, h, H) \in P_Q$, satisfying the additional spectral conditions:
\begin{enumerate}
\item The eigenvalues of $L(q, h, H)$ are simple.
\item $|\al_n| \le A$ for $n \ge 1$.
\end{enumerate}

Note that the latter condition does not follow from $(q, h, H) \in P_Q$. This is shown by Example~\ref{ex:unbound}. 

\begin{thm} \label{thm:uniQA}
Suppose that $(q^{(1)}, h^{(1)}, H^{(1)})$ and $(q^{(2)}, h^{(2)}, H^{(2)})$ belong to $P_{Q, A}$. Then, the uniform stability estimate \eqref{uni} holds with $C = C(Q, A)$. Thus, the mapping $S \mapsto (q, h, H)$ is Lipschitz continuous on the preimage of $P_{Q, A}$.
\end{thm}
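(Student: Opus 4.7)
The plan is to reduce Theorem~\ref{thm:uniQA} to the central uniform stability Theorem~\ref{thm:uni} by showing that the spectral data of every $(q,h,H) \in P_{Q,A}$, after an elementary shift that normalizes $\omega$ to zero, lie in the set $\mathring{B}_{\Omega,K}$ for some $\Omega = \Omega(Q,A)$ and $K = K(Q,A)$.

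First, I would derive uniform bounds on the spectral data. The standard asymptotic analysis for the Sturm-Liouville problem \eqref{eqv}--\eqref{bc} (based on transformation operators or direct estimates of $\varphi(x,\rho)$) yields the expansions \eqref{asympt} with $\|\{\varkappa_n\}\|_{l_2} + \|\{s_n\}\|_{l_2} \le C(Q)$, while $|\omega| \le |h|+|H|+\tfrac{\sqrt{\pi}}{2}\|q\|_{L_2} \le C(Q)$ is immediate from \eqref{defom}. Hence $S \in B_{\Omega}$ for some $\Omega = \Omega(Q)$. Next I would use the shift trick to make $\omega = 0$: set $c := 2\omega/\pi$ and consider $\hat L := L(q-c, h, H)$, whose eigenfunctions coincide with those of $L$, whose eigenvalues are $\lambda_n - c$, and whose weight numbers equal the original $\alpha_n$; in particular, $\hat\omega = 0$. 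The shifted spectral data $\hat S$ still satisfy \eqref{asympt} with remainders controlled by $C(Q)$, so $\hat S \in \mathring B_{\Omega'}$ with $\Omega' = \Omega'(Q)$. Moreover, applied to two triples, the shifts satisfy $|c^{(1)}-c^{(2)}| \le (2/\pi)|\omega^{(1)}-\omega^{(2)}|$ and a routine asymptotic comparison gives $d(\hat S^{(1)}, \hat S^{(2)}) \le C(Q)\, d(S^{(1)}, S^{(2)})$.

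The main technical step is to establish the uniform operator-norm bound
\begin{equation*}
\|(I+\hat{\tilde R}(x))^{-1}\|_{m \to m} \le K(Q,A), \quad x \in [0,\pi].
\end{equation*}
The explicit representation of this inverse, coming from the construction of the modified main equation in Section~\ref{sec:maineq}, expresses its entries through products of the solutions $\varphi(x,\rho_n)$, their $\rho$-derivatives $\dot\varphi(x,\rho_n)$, and the weight numbers $\alpha_n$. Standard Volterra estimates bound $\varphi(x,\rho_n)$ and $\dot\varphi(x,\rho_n)$ uniformly in terms of $\|q\|_{L_2}$ and $|h|$, while the hypothesis $|\alpha_n|\le A$ is precisely what is needed to control the remaining spectral factors. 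The upper bound $A$ cannot be dropped here: Example~\ref{ex:unbound} shows that on $P_Q$ alone the weight numbers can be arbitrarily large, which would destroy any uniform control.

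Finally, having verified $\hat S^{(1)}, \hat S^{(2)} \in \mathring B_{\Omega',K}$, I would apply Theorem~\ref{thm:uni} to obtain
\begin{equation*}
\|(q^{(1)}-c^{(1)}) - (q^{(2)}-c^{(2)})\|_{L_2} + |h^{(1)}-h^{(2)}| + |H^{(1)}-H^{(2)}| \le C(Q,A)\, d(\hat S^{(1)}, \hat S^{(2)}).
\end{equation*}
Combining this with $|c^{(1)}-c^{(2)}| \le (2/\pi)\,d(S^{(1)},S^{(2)})$ and the bound $d(\hat S^{(1)}, \hat S^{(2)}) \le C(Q)\, d(S^{(1)},S^{(2)})$ from the shift estimate, the triangle inequality yields \eqref{uni} with $C = C(Q,A)$. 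The only substantive difficulty is the operator-norm bound in the previous paragraph; the remaining pieces are bookkeeping built on results already established in the paper.
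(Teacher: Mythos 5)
Your proposal follows essentially the same route as the paper: uniform bounds on the spectral data from Lemma~\ref{lem:sdQ}, the shift $q \mapsto q - \tfrac{2}{\pi}\om$ to reduce to $\om = 0$, the key operator-norm bound on $(I+\tilde R(x))^{-1}$ obtained from the explicit inverse $I - R(x)$ of \eqref{invR} with $R(x)$ built from the direct problem and estimated via Volterra bounds on $\vv(x,\rho)$ together with $|\al_n| \le A$ (this is exactly Lemma~\ref{lem:RQA}), and finally an application of Theorem~\ref{thm:uni} followed by undoing the shift. One inaccuracy is worth correcting: your claim that $\|\{\varkappa_n\}\|_{l_2} + \|\{s_n\}\|_{l_2} \le C(Q)$ and hence $S \in B_{\Omega}$ with $\Omega = \Omega(Q)$ is false, precisely because of Example~\ref{ex:unbound} --- on $P_Q$ alone the finitely many low-index weight numbers, and therefore the corresponding $s_n$, are not controlled by $Q$; the paper's Lemma~\ref{lem:sdQ} accordingly bounds only the tail $\|\{s_n\}_{n>N(Q)}\|_{l_2}$ by $C(Q)$. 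So the hypothesis $|\al_n| \le A$ enters not only in the operator-norm bound, as you state, but already in establishing the radius $\Omega = \Omega(Q,A)$ of the ball containing the data (cf.\ Lemma~\ref{lem:mainQA}). Since you invoke $A$ anyway and the final constant is $C(Q,A)$, this is a misattribution rather than a gap, and the rest of the argument stands.
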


For the case of multiple eigenvalues, we consider the generalized spectral data related to singularities of the Weyl function $M(\la)$. It is defined as $M(\rho^2) := \Phi(0, \rho)$, where $\Phi(x, \rho)$ is the solution of equation \eqref{eqv} satisfying the boundary conditions
$$
\Phi'(0,\rho) - h \Phi(0,\rho) = 1, \quad \Phi'(\pi, \rho) + H \Phi(\pi, \rho) = 0.
$$

Denote by $\mathcal I$ the index set of distinct eigenvalues and by $m_n$, the multiplicity of $\la_n$:
$$
\mathcal I := \{ 1 \} \cup \{ n \ge 2 \colon \la_n \ne \la_k, \, k < n \}, \quad
m_n := \# \{ k \in \mathbb N \colon \la_k = \la_n \}.
$$
Without loss of generality, assume that multiple eigenvalues are consecutive: $\la_n = \la_{n + 1} = \dots = \la_{n + m_n-1}$ for $n \in \mathcal I$. Then, the Weyl function admits the following representation (see \cite{But07}):
\begin{equation} \label{Laurent}
M(\la) = \sum_{n \in \mathcal I} \sum_{\nu = 0}^{m_n - 1} \frac{\al_{n + \nu}}{(\la- \la_n)^{\nu + 1}},
\end{equation}
where $\{ \al_n \}_{n \ge 1}$ are some complex numbers. Due to the asymptotics \eqref{asympt}, the eigenvalues $\la_n$ are simple for sufficiently large indices $n > N$. The corresponding coefficients $\al_n$ for $n > N$ in \eqref{Laurent} are non-zero and coincide with the quantities defined by \eqref{defal}. Therefore, we use the same notation for the generalized weight numbers as before. By virtue of Theorem 2 in \cite{But07}, the generalized spectral data $\{ \rho_n, \al_n \}_{n \ge 1}$ uniquely specify $q$, $h$, and $H$. The asymptotics \eqref{asympt} also hold.

Our approach allows us to obtain uniform stability estimates for problems $L^{(1)}$ and $L^{(2)}$ with different multiplicities of eigenvalues for $n \le N$. For this purpose, we consider the circle
\begin{equation} \label{defGa}
\Gamma_N := \bigl\{ \rho \in \mathbb C \colon |\rho| = N - \tfrac{1}{2} \bigr\},
\end{equation}
which contains the values $\{ \rho_n \}_{n \le N}$. Introduce the distance
\begin{align} \nonumber
d_N(S^{(1)}, S^{(2)}) = & \, \max_{\rho \in \Gamma_N} \bigl|M^{(1)}(\rho^2) - M^{(2)}(\rho^2)\bigr| \\ \label{defdN} & + \left(|\om^{(1)} - \om^{(2)}|^2 + \sum_{n > N} \bigl( |\varkappa_n^{(1)} - \varkappa_n^{(2)}|^2 + |s_n^{(1)} - s_n^{(2)}|^2 \bigr) \right)^{1/2},
\end{align}
which generalizes \eqref{defdist}. The first term in the right-hand side of \eqref{defdN} allows us to compare data $S^{(1)}$ and $S^{(2)}$ with different eigenvalue multiplicities for $n \le N$.

We obtain uniform stability estimates of form
\begin{equation} \label{unimult}
\| q^{(1)} - q^{(2)} \|_{L_2} + |h^{(1)} - h^{(2)}| + |H^{(1)} - H^{(2)}| \le C d_N(S^{(1)}, S^{(2)})
\end{equation}
under various conditions. In particular, we get the following theorem on the conditional uniform stability.

\begin{thm} \label{thm:uniQ}
For every $Q > 0$, there exists $N = N(Q)$ such that, for any $(q^{(1)}, h^{(1)}, H^{(1)})$ and $(q^{(2)}, h^{(2)}, H^{(2)})$ in $P_Q$, the estimate \eqref{unimult} holds with $C = C(Q)$.
\end{thm}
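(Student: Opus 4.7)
The plan is to reduce Theorem~\ref{thm:uniQ} to a multi-eigenvalue analog of Theorem~\ref{thm:uniQA} by exploiting the fact that the a priori bound $(q,h,H) \in P_Q$ automatically yields good asymptotic behaviour of the spectral data for large indices, so that all multiplicities and all possibly large weight numbers are confined inside the circle $\Gamma_N$, whose contribution is controlled by the first term of $d_N$ in \eqref{defdN}.

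First I would use classical transformation-operator estimates for $(q,h,H) \in P_Q$ to obtain uniform-in-$Q$ control of the remainders in the asymptotics \eqref{asympt}: the quantities $|\omega|$, $\|\{\varkappa_n\}\|_{l_2}$, and $\|\{s_n\}\|_{l_2}$ are all bounded by some $C_1(Q)$. Since individual $l_2$-terms are dominated by the norm, this gives $|\varkappa_n|, |s_n| \le C_1(Q)$ for every $n$. Plugging into \eqref{asympt} and choosing $N = N(Q)$ large enough, for every $(q,h,H) \in P_Q$ and every $n > N$ one has
$$
\rho_{n+1} - \rho_n \ge \tfrac{1}{2}, \qquad \tfrac{1}{\pi} \le |\alpha_n| \le \tfrac{3}{\pi}.
$$
In particular, eigenvalues with $n > N$ are simple and well separated, and their weight numbers are uniformly bounded above and below. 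Any multiplicities or large weight numbers are therefore confined to indices $n \le N$, which are handled by the $\Gamma_N$-contour term of $d_N$.

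With this choice of $N$, I would invoke the multi-eigenvalue uniform stability theorem from Section~\ref{sec:mult}, which plays the role of Theorem~\ref{thm:uniQA} in the generalized setting. In that framework the modified main equation on the Banach space $m$ is assembled from a contour term over $\Gamma_N$ (bounded by $\max_{\rho \in \Gamma_N} |M^{(1)}(\rho^2) - M^{(2)}(\rho^2)|$, which is precisely the first summand of $d_N$) together with residue-type terms for $n > N$, formally identical to the simple-eigenvalue construction. Once a Lipschitz estimate of the form \eqref{unimult} has been established conditional on a uniform bound $\|(I + \tilde R(x))^{-1}\|_{m \to m} \le K$, it remains to verify that such a $K = K(Q)$ holds uniformly over the preimage of $P_Q$.

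The main obstacle is this last uniform operator-norm bound. I would argue by contradiction and compactness: suppose $(q^{(k)}, h^{(k)}, H^{(k)}) \in P_Q$ produced generalized spectral data $S^{(k)}$ and points $x_k \in [0,\pi]$ with $\|(I + \tilde R^{(k)}(x_k))^{-1}\|_{m\to m} \to \infty$. Using weak compactness of the $L_2$-ball for the potentials, compactness of $[0,\pi]$ for $x_k$, and the fact that the asymptotic tails of the spectral data are controlled uniformly by the previous step while only finitely many eigenvalues can lie inside $\Gamma_N$, one extracts along a subsequence a limit $(q^\star, h^\star, H^\star) \in P_Q$ and a limit point $x^\star \in [0,\pi]$. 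The corresponding limiting spectral data lie in a class for which the multi-eigenvalue analog of Theorem~\ref{thm:nsc} guarantees bounded invertibility of $(I + \tilde R^\star(x^\star))$; joint continuity of $\tilde R(x)$ in $x$ and in the spectral data then forces $\|(I + \tilde R^{(k)}(x_k))^{-1}\|_{m\to m}$ to stay bounded along the subsequence, contradicting the hypothesis. The delicate point is to handle possible jumps in eigenvalue multiplicities in the limit; the choice of $\Gamma_N$ is designed precisely so that the Weyl-function term in $d_N$ remains continuous across such jumps.
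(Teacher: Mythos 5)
Your overall architecture matches the paper's: choose $N = N(Q)$ from the a priori bounds so that all multiplicities and unbounded weight numbers are trapped inside $\Gamma_N$ (this is exactly Lemma~\ref{lem:sdQ}), bound $d_N(S,\tilde S)$ by $C(Q)$ using the contour term and the tail asymptotics (Corollary~\ref{cor:estM} and \eqref{estCQ1}--\eqref{estCQ2}), and then feed everything into the multi-eigenvalue stability theorem (Theorem~\ref{thm:unimult}), finishing with the shift that normalizes $\om$. Where you genuinely diverge is the key step, the uniform bound $\| (I + \tilde R(x))^{-1} \| \le K(Q)$. The paper does \emph{not} argue by compactness here: since in the conditional setting one starts from an actual problem $(q,h,H) \in P_Q$, the operator $R(x)$ can be built explicitly from the solution $\vv(x,\rho)$ of the direct problem, its norm is estimated directly via the Volterra integral equation and the bounds \eqref{estvvQ}--\eqref{estD} (the adaptation of Lemma~\ref{lem:RQA}), and then the exact inverse formula \eqref{invR}, $(I + \tilde R(x))^{-1} = I - R(x)$, gives the bound at once. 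Your contradiction-plus-weak-compactness argument is the one the paper reserves for the \emph{unconditional} results (Lemma~\ref{lem:V}), where no a priori problem is available and the explicit inverse cannot be constructed.

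Your route is workable in principle but leaves several nontrivial sub-steps unverified, and you should be aware of them. First, to conclude that $\|(I+\tilde R^{(k)}(x_k))^{-1}\|$ stays bounded you need $\tilde R^{(k)}(x_k) \to \tilde R^{\star}(x^{\star})$ in the \emph{operator norm} on $\mathfrak B_N$, not merely strongly; for the discrete part this requires strong $l_2$ convergence of the data differences, obtained via the compact embedding $l_2^1 \hookrightarrow l_2$ as in Lemma~\ref{lem:V}, and for the continuous part it requires uniform convergence of $\hat M^{(k)}$ on $\Gamma_N$. Second, "joint continuity" presupposes that the limit of the spectral data $S^{(k)}$ coincides with the spectral data of the weak limit $(q^{\star},h^{\star},H^{\star})$; continuity of the direct map $q \mapsto S$ under weak $L_2$ convergence (including convergence of the Weyl functions on $\Gamma_N$ and of the residues for $n > N$) must be proved, e.g.\ through the Neumann series for $\vv(x,\rho)$ and Rouch\'e's theorem. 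Only after that identification can you invoke the necessity part of the multi-eigenvalue analog of Theorem~\ref{thm:nsc} to get invertibility of the limit operator. None of these steps is false, but each costs real work, and the paper's direct estimate of $\|R(x)\|_{\mathfrak B \to \mathfrak B}$ bypasses all of them. A minor slip: in the multiple-eigenvalue setting the main equation lives in $\mathfrak B = \mathfrak B_N$ (continuous part on $\Gamma_N$ plus discrete tail), not in $m$.
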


Theorem~\ref{thm:uniQ}, in contrast to Theorem~\ref{thm:uniQA}, contains no restrictions on the spectral data.

For studying the case of multiple eigenvalues, we derive the main equation in a suitable Banach space, which contains a continuous part and a discrete part. On the one hand, our construction resembles the approach of \cite{Bond20}. Therein, instead of finite spectral data with multiplicities, the continuous function $M(\rho^2)$ on the circle $\Gamma_N$ is considered.
On the other hand, we use the modification, which is described in Section~\ref{sec:maineq} and convenient for investigation of uniform stability.

Furthermore, in the case of multiple eigenvalues, it is natural to study the inverse spectral problem by the so-called Cauchy data (see, e.g., \cite{RS92, MPS94, BCK20, XB23, Bond23}).
Introduce the functions
\begin{equation} \label{defDelta}
\Delta(\rho) := \vv'(\pi, \rho) + H \vv(\pi, \rho), \quad \Delta_0(\rho) := \psi'(\pi, \rho) + H \psi(\pi, \rho),
\end{equation}
where $\psi(x, \rho)$ is the solution of equation \eqref{eqv} under the initial conditions $\psi(0, \rho) = 0$, $\psi'(0, \rho) = 1$. These functions admit the following standard representations, which are obtained by using the transformation operators for $\vv(x, \rho)$ and $\psi(x, \rho)$ (see \cite{FY01, Mar11}): 
\begin{align} \label{intDel}
\Delta(\rho) & = -\rho \sin \rho \pi + \om \cos \rho \pi + \int_0^{\pi} \mathscr N(t) \cos \rho t \, dt, \\ \label{intDel0}
\Delta_0(\rho) & = \cos \rho \pi + \om_0 \frac{\sin \rho \pi}{\rho} + \frac{1}{\rho}\int_0^{\pi} \mathscr N_0(t) \sin \rho t \, dt, 
\end{align}
where
$$
\om_0 := H + \frac{1}{2} \int_0^{\pi} q(t) \, dt, \quad \mathscr N, \, \mathscr N_0 \in L_2(0, \pi).
$$

We call the collection $\{ \mathscr N, \mathscr N_0, \om, \om_0 \}$ \textit{the Cauchy data} of the problem $L$. Consider the following problem.

\begin{ip} \label{ip:Cauchy}
Given the Cauchy data $\{ \mathscr N, \mathscr N_0, \om, \om_0 \}$, find $q$, $h$, and $H$.
\end{ip}

One can easily see that the zeros of $\Delta(\rho)$ and $\Delta_0(\rho)$ coincide with the square roots of the eigenvalues for equation \eqref{eqv} under the boundary conditions \eqref{bc} and
$$
y(0) = 0, \quad y'(\pi) + H y(\pi) = 0,
$$
respectively. Therefore, Inverse Problem~\ref{ip:Cauchy} is equivalent to the classical two-spectra inverse problem by Borg \cite{Borg46} and also to Inverse Problem~\ref{ip:simp}. 
Nevertheless, in the non-self-adjoint case, it is more convenient to study the inverse problem by the Cauchy data to avoid working with multiplicities. Moreover, the use of the Cauchy data is natural for numerical reconstruction of the potential (see \cite{RS92, MPS94}), as well as for analysis of solvability and stability for various types of inverse spectral problems (see \cite{BCK20, XB23, Bond23}).

Relying on Theorem~\ref{thm:uniQ}, we prove the following theorem on the uniform stability of Inverse Problem~\ref{ip:Cauchy}.

\begin{thm} \label{thm:uniCauchy}
Suppose that $(q^{(1)}, h^{(1)}, H^{(1)})$ and $(q^{(2)}, h^{(2)}, H^{(2)})$ belong to $P_Q$. Then
\begin{multline*}
\| q^{(1)} - q^{(2)} \|_{L_2} + |h^{(1)} - h^{(2)}| + |H^{(1)} - H^{(2)}| \\ \le C(Q) \bigl(\| \mathscr N^{(1)} - \mathscr N^{(2)} \|_{L_2} + \| \mathscr N_0^{(1)} - \mathscr N_0^{(2)} \|_{L_2} + |\om^{(1)} - \om^{(2)}| + |\om_0^{(1)} - \om_0^{(2)}|\bigr).
\end{multline*}
Thus, the mapping $\{ \mathscr N, \mathscr N_0, \om, \om_0 \} \mapsto (q, h, H)$ is Lipschitz continuous on the preimage of $P_Q$.
\end{thm}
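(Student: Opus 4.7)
\noindent\textbf{Proof plan for Theorem~\ref{thm:uniCauchy}.} The plan is to reduce the theorem to Theorem~\ref{thm:uniQ} by bounding the generalized spectral distance $d_N(S^{(1)}, S^{(2)})$ from \eqref{defdN} in terms of the Cauchy data difference
$$
D := \| \mathscr N^{(1)} - \mathscr N^{(2)} \|_{L_2} + \| \mathscr N_0^{(1)} - \mathscr N_0^{(2)} \|_{L_2} + |\om^{(1)} - \om^{(2)}| + |\om_0^{(1)} - \om_0^{(2)}|.
$$
The basic bridge between the two settings is the identity $M(\rho^2) = -\Delta_0(\rho)/\Delta(\rho)$, obtained by writing $\Phi(x,\rho) = M(\rho^2) \vv(x,\rho) + \psi(x,\rho)$ and substituting into the boundary condition at $x = \pi$.

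First I would establish a uniform lower bound $|\Delta^{(s)}(\rho)| \ge c(Q) (N - \tfrac{1}{2}) e^{\pi |\mbox{Im}\,\rho|}$ on $\Gamma_N$, valid for all $N \ge N_0(Q)$. Every point of $\Gamma_N$ has distance $\ge \tfrac{1}{2}$ from $\mathbb Z$, so $|\sin \rho \pi| \ge c_0 e^{\pi |\mbox{Im}\,\rho|}$ with a universal constant; the remainder in \eqref{intDel} is controlled via $|\om| + \|\mathscr N\|_{L_2} \le C(Q)$ (the latter being a standard transformation-operator bound for $(q,h,H) \in P_Q$) and is absorbed by $|\rho \sin \rho \pi|$ once $N \ge N_0(Q)$. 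Then, using the identity
$$
M^{(1)} - M^{(2)} = \frac{\Delta^{(1)}\bigl(\Delta_0^{(2)} - \Delta_0^{(1)}\bigr) + \Delta_0^{(1)}\bigl(\Delta^{(1)} - \Delta^{(2)}\bigr)}{\Delta^{(1)}\, \Delta^{(2)}}
$$
and applying Cauchy-Schwarz in \eqref{intDel}--\eqref{intDel0}, I would get $|\Delta^{(1)} - \Delta^{(2)}| \le C e^{\pi |\mbox{Im}\,\rho|} D$ and $|\Delta_0^{(1)} - \Delta_0^{(2)}| \le C |\rho|^{-1} e^{\pi |\mbox{Im}\,\rho|} D$, together with the matching upper bounds $|\Delta^{(1)}| \le C(Q) |\rho| e^{\pi |\mbox{Im}\,\rho|}$ and $|\Delta_0^{(1)}| \le C(Q) e^{\pi |\mbox{Im}\,\rho|}$. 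Combining these yields $\max_{\rho \in \Gamma_N} |M^{(1)}(\rho^2) - M^{(2)}(\rho^2)| \le C(Q) D$.

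For the tail in \eqref{defdN}, the term $|\om^{(1)} - \om^{(2)}|$ already belongs to $D$. To control the $l_2$-sum over $n > N$ of $|\varkappa_n^{(1)} - \varkappa_n^{(2)}|^2 + |s_n^{(1)} - s_n^{(2)}|^2$, I would use that in this range the eigenvalues are simple, $\rho_n^{(s)}$ are the zeros of $\Delta^{(s)}$ outside $\Gamma_N$, and $\al_n^{(s)} = -2\rho_n^{(s)} \Delta_0^{(s)}(\rho_n^{(s)})/\dot\Delta^{(s)}(\rho_n^{(s)})$. A Rouché-type argument localizes each $\rho_n^{(s)}$ inside a disk of radius $O(1/n)$ around $n-1$ uniformly in $P_Q$, while Parseval-type inequalities for the Riesz bases $\{\cos \rho_n t\}_{n > N}$ and $\{\sin \rho_n t\}_{n > N}$ convert the pointwise bounds on $\Delta^{(1)} - \Delta^{(2)}$ and $\Delta_0^{(1)} - \Delta_0^{(2)}$ into an $l_2$-estimate $\le C(Q) D$. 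The combined bound $d_N(S^{(1)}, S^{(2)}) \le C(Q) D$ and Theorem~\ref{thm:uniQ} then give the conclusion. I expect this last step to be the main obstacle, since keeping the Riesz-basis constants and the lower bounds on $|\dot\Delta^{(s)}(\rho_n^{(s)})|$ uniform in $P_Q$ requires technical care, although it follows from standard contour-integral arguments on $\Gamma_N$ once the lower bound on $|\Delta^{(s)}|$ from the previous step is in place.
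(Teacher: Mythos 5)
Your plan follows essentially the same route as the paper: Theorem~\ref{thm:uniCauchy} is proved there by combining Theorem~\ref{thm:uniQ} with Lemma~\ref{lem:CSD}, which establishes exactly your key estimate $d_N(S^{(1)},S^{(2)}) \le C\, \| \mathscr C^{(1)} - \mathscr C^{(2)} \|_{\mathcal C}$ via the quotient formula for $M = -\Delta_0/\Delta$, uniform upper and lower bounds for $|\Delta^{(s)}|$ and $|\Delta_0^{(s)}|$ on $\Gamma_N$, Rouch\'e localization of the zeros, and $l_2$-estimates for the tail coming from the residue formula for $\al_n$, together with the bound $\| \mathscr C \|_{\mathcal C} \le C(Q)$ on $P_Q$. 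The only ingredient your outline elides is the auxiliary shift of Lemma~\ref{lem:auxC}, which the paper uses to reduce to $\om^{(1)} = \om^{(2)} = 0$ before estimating $\{ \varkappa_n^{(1)} - \varkappa_n^{(2)} \}$ in $l_2$ (without it, $n(\rho_n^{(1)} - \rho_n^{(2)})$ tends to $(\om^{(1)} - \om^{(2)})/\pi$ and is not itself square-summable), but this is a technical adjustment within the same strategy.
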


An advantage of Theorem~\ref{thm:uniCauchy} is its independence of eigenvalue multiplicities. Furthermore, in contrast to Theorem~\ref{thm:uniQ}, choosing the index $N$ is not needed, when we formulate stability in terms of Cauchy data.

\section{Modified main equation} \label{sec:maineq}

In this section, we reduce Inverse Problem~\ref{ip:simp} in the case of simple eigenvalues to a linear equation. Our approach is based on the infinite system of linear equations \eqref{mainvv}, which has been derived in \cite{FY01} by the method of spectral mappings. However, we apply to the system \eqref{mainvv} a transform that is different from the one in \cite{FY01}. As a result, we get the modified main equation \eqref{main}, which is convenient for the analysis of the uniform stability. Furthermore, for the operator $(I + \tilde R(x))$ from the main equation, we study the properties that are essential for investigation of the inverse problem solvability and uniform stability in the next sections.

Consider a problem $L = L(q, h, H)$ with a simple spectrum and the so-called model problem $\tilde L = L(0, 0, 0)$.
Clearly, the problem $\tilde L$ has the spectral data
\begin{equation} \label{sd0}
\tilde \rho_n = n - 1, \quad n \ge 1, \qquad \tilde \al_1 = \frac{1}{\pi}, \quad
\tilde \al_n = \frac{2}{\pi}, \quad n \ge 2,
\end{equation}
and $\tilde \varphi(x, \rho) = \cos \rho x$.
Define the auxiliary functions
\begin{equation} \label{defD}
D(x, \rho, \theta) = \int_0^x \vv(t, \rho) \vv(t, \theta) \, dt, \quad
\tilde D(x, \rho, \theta) = \int_0^x \cos \rho t \cos \theta t \, dt.
\end{equation}

Introduce the notations
\begin{equation} \label{not}
\left.
\begin{array}{l}
\rho_{n0} = \rho_n, \quad \rho_{n1} = \tilde \rho_n, \quad \al_{n0} = \alpha_n, \quad \al_{n1} = \tilde \al_n, \\
\vv_{ni}(x) = \varphi(x, \rho_{ni}), \quad \tilde \vv_{ni}(x) = \tilde \varphi(x, \rho_{ni}), \quad n \ge 1, \: i = 0, 1.\end{array}\quad \quad \right\}
\end{equation}

By using the contour integration, the following proposition has been proved in \cite[Section 1.6.1]{FY01}:

\begin{prop}[\hspace*{-3pt}\cite{FY01}] \label{prop:contour}
The following relations hold:
\begin{gather} \label{relvv}
\tilde \vv(x, \rho) = \vv(x, \rho) + \sum_{k = 1}^{\infty} \bigl(\al_{k0} \tilde D(x, \rho, \rho_{k0}) \vv_{k0}(x) - \al_{k1} \tilde D(x, \rho, \rho_{k1}) \vv_{k1}(x)\bigr), \\ \label{relD}
D(x, \rho, \theta) - \tilde D(x, \rho, \theta) + \sum_{k = 1}^{\infty} \bigl( \al_{k0} \tilde D(x, \rho, \rho_{k0}) D(x, \rho_{k0}, \theta) - \al_{k1} \tilde D(x, \rho, \rho_{k1}) D(x, \rho_{k1}, \theta) \bigr) = 0,
\end{gather}
where the series converge absolutely and uniformly with respect to $x \in [0, \pi]$ and $\rho, \theta$ on compact sets. In particular, the substitution $\rho = \rho_{ni}$ in \eqref{relvv} implies
\begin{equation} \label{mainvv}
\tilde \vv_{ni}(x) = \vv_{ni}(x) + \sum_{k = 1}^{\infty} (\al_{k0} \tilde D(x, \rho_{ni}, \rho_{k0}) \vv_{k0}(x) - \al_{k1} \tilde D(x, \rho_{ni}, \rho_{k1}) \vv_{k1}(x)), \quad n \ge 1, \, i = 0, 1.
\end{equation}
\end{prop}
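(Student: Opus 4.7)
The plan is to apply the method of spectral mappings via contour integration in the complex spectral plane. The starting observation is that the Weyl function $M(\la) = \Phi(0, \rho)$ admits the Mittag-Leffler expansion
\[
M(\la) = \sum_{n \ge 1} \frac{\al_n}{\la - \la_n}, \qquad
M(\la) - \tilde M(\la) = \sum_{n \ge 1}\!\left(\frac{\al_{n0}}{\la - \rho_{n0}^2} - \frac{\al_{n1}}{\la - \rho_{n1}^2}\right),
\]
with convergence uniform on compact subsets of $\mathbb C$ avoiding the poles. This meromorphic function, having simple poles precisely at the eigenvalues of $L$ and $\tilde L$ with residues $\al_{n0}$ and $-\al_{n1}$, is the carrier of the spectral data throughout the argument.

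To obtain \eqref{relvv}, I would consider the contour integral
\[
J_N(x, \rho) := \frac{1}{2\pi i} \oint_{|\la| = R_N^2} \tilde D(x, \rho, \sqrt\la)\, \vv(x, \sqrt\la) \bigl(M(\la) - \tilde M(\la)\bigr) \, d\la,
\]
with $R_N = N - \tfrac12$. Both $\vv(x, \rho)$ and $\tilde D(x, \rho, \theta)$ depend on their spectral arguments only through $\rho^2$ (respectively $\theta^2$), so the integrand is single-valued in $\la$. Applying the residue theorem and substituting the Mittag-Leffler series collects simple poles at $\la = \rho_{k0}^2$ and $\la = \rho_{k1}^2$ inside the contour: the residue computation yields precisely the partial sum appearing on the right-hand side of \eqref{relvv}. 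In parallel, $J_N$ is evaluated directly via the resolvent / Green's-function representation for $L$ and $\tilde L$, together with the asymptotic formulas $|M(\la)| = O(|\la|^{-1/2})$ and $|\vv(x, \sqrt\la)|,\, |\tilde D(x, \rho, \sqrt\la)| \le C e^{|\mathrm{Im}\sqrt\la|\, x}$; the limit $\lim_{N \to \infty} J_N$ is thereby identified with the difference $\tilde\vv(x, \rho) - \vv(x, \rho)$. Relation \eqref{relD} follows by the same machinery, with $\vv(x, \sqrt\la)$ in the integrand replaced by $D(x, \sqrt\la, \theta)$: the residues at $\rho_{ki}^2$ produce the products $\tilde D(x, \rho, \rho_{ki}) D(x, \rho_{ki}, \theta)$, and the limit of the integral yields $\tilde D(x, \rho, \theta) - D(x, \rho, \theta)$.

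Absolute and uniform convergence of the series on compacta in $(x, \rho, \theta)$ is then a direct consequence of the asymptotics $\al_n = 2/\pi + O(n^{-1})$, $\rho_n = n - 1 + O(n^{-1})$, together with the elementary bounds $|\tilde D(x, \rho, \rho_k)| = O(k^{-1})$ and the uniform boundedness of $\vv_k(x)$ and $D(x, \rho_k, \theta)$ for $x \in [0, \pi]$ and $\rho, \theta$ in compact sets. These summability properties are exactly what is needed to legitimize the interchange of residue summation with the contour integral.

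The main obstacle I expect is the rigorous vanishing of the tail of the contour integral, i.e.\ the precise identification of $\lim_{N \to \infty} J_N$ with the claimed left-hand side. One must (i) show that the contours $\{|\la| = R_N^2\}$ stay uniformly separated from every eigenvalue of $L$ and $\tilde L$ for $N$ large, which follows from the asymptotics once $N$ exceeds a threshold determined by the $l_2$-tails $\{\varkappa_n\}$; and (ii) combine the exponential bounds for $\vv$, $\tilde D$, $D$ on the contour with the decay of $M - \tilde M$ off the eigenvalues to produce cancellation uniformly in the angular direction. This asymptotic bookkeeping, rather than any new complex-analytic insight, is the technical heart of the argument.
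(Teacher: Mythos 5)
Your proposal is correct and follows essentially the same route as the paper, which simply cites the contour-integration derivation of \cite[Section 1.6.1]{FY01}: one integrates $\tilde D(x,\rho,\sqrt{\la})\,\vv(x,\sqrt{\la})\,\bigl(M(\la)-\tilde M(\la)\bigr)$ over expanding contours, collects the residues at $\la_{k}$ and $\tilde\la_{k}$ to obtain the series in \eqref{relvv}--\eqref{relD}, and identifies the limit of the integral with $\tilde\vv-\vv$ (respectively $\tilde D - D$) via the spectral-mapping/Green's-function representation. The points you flag as the technical heart (uniform separation of the contours from the spectra and the decay of $\hat M$ on them) are exactly the estimates carried out in \cite{FY01}, so no new ingredient is missing.
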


For each fixed $x \in [0,\pi]$, the relations \eqref{mainvv} can be treated as an infinite linear system with respect to the unknowns $\{ \vv_{ni}(x) \}$. Using the solution of this system, one can find the solution of Inverse Problem~\ref{ip:simp} due to the following proposition (see Lemma 1.6.5 in \cite{FY01}):

\begin{prop}[\hspace*{-3pt}\cite{FY01}] \label{prop:recqhH}
The series
\begin{equation} \label{defeps}
\eps(x) := \sum_{k = 1}^{\infty} \bigl( \al_{k0} \vv_{k0}(x) \tilde \vv_{k0}(x) - \al_{k1} \vv_{k1}(x) \tilde \vv_{k1}(x) \bigr)
\end{equation}
converges in $W_2^1[0,\pi]$. Furthermore,
\begin{equation} \label{recqhH}
q(x) = -2\eps'(x), \quad h = -\eps(0), \quad H = \eps(\pi).
\end{equation}
\end{prop}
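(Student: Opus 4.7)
The plan is to prove the three assertions by exploiting two structural identities already available: the expansion \eqref{relvv} and the fact that, at the spectral values $\rho=\rho_{k0}$ and $\rho=\rho_{k1}$, the functions $\vv_{k0}$ and $\tilde\vv_{k1}$ satisfy the Robin boundary conditions of $L$ and $\tilde L$ respectively. Convergence in $W_2^1$ is an analytic matter handled by the asymptotics \eqref{asympt}; the three recovery formulas will then be read off from \eqref{relvv} and its $x$-derivative by substituting $x=0$, $x=\pi$, and comparing $\rho$-independent terms.

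First I would establish $\eps\in W_2^1[0,\pi]$. From the asymptotics \eqref{asympt} together with the standard transformation-operator representation for $\vv(x,\rho)$, one has uniform estimates $\vv_{k0}(x)=\cos(\rho_{k0}x)+O(k^{-1})$ and $\vv_{k0}'(x)=-\rho_{k0}\sin(\rho_{k0}x)+O(1)$ with remainders controlled by $\ell_2$-sequences once the model pair $\tilde\vv_{k1}(x)=\cos((k-1)x)$ is subtracted. Writing $\al_{k0}-\al_{k1}=s_k/k+\ldots$ and $\rho_{k0}-\tilde\rho_{k1}=\varkappa_k/k+\ldots$ with $\{s_k\},\{\varkappa_k\}\in l_2$, I expand the generic term and its $x$-derivative into a principal $l_2$-summable part plus an absolutely convergent tail. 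This bounds both $\eps$ and $\eps'$ in $L_2(0,\pi)$.

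Next I would derive the boundary identities. Differentiating \eqref{relvv} in $x$ gives $(\tilde D)_x(x,\rho,\theta)=\cos(\rho x)\cos(\theta x)$; since $\tilde D(0,\rho,\theta)=0$ and $\vv_{ki}(0)=\tilde\vv_{ki}(0)=1$, evaluation at $x=0$, combined with $\tilde\vv'(0,\rho)=0$ and $\vv'(0,\rho)=h$, reduces the differentiated identity to $0=h+\sum_k(\al_{k0}-\al_{k1})=h+\eps(0)$. For $H=\eps(\pi)$, I form $\tilde\vv'(\pi,\rho)+H\tilde\vv(\pi,\rho)$ from \eqref{relvv} and its derivative; the eigenvalue boundary conditions $\vv'_{k0}(\pi)+H\vv_{k0}(\pi)=0$ together with the analogue for the model problem collapse the sums, producing exactly $\eps(\pi)-H$ on the left, which must vanish since the right-hand side does.

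The formula $q(x)=-2\eps'(x)$ is the most delicate part and the step I expect to be the main obstacle. My plan is to differentiate \eqref{relvv} twice in $x$, substitute $\vv''=(q-\rho^2)\vv$ and $\tilde\vv''=-\rho^2\tilde\vv$, and collect the $\rho$-independent part of the resulting identity: the mixed terms $\cos(\rho x)\cos(\rho_{ki}x)\vv'_{ki}(x)$ and $\sin(\rho x)\sin(\rho_{ki}x)\vv_{ki}(x)$ combine, after subtracting a $\rho^2$-multiple of \eqref{relvv}, into $2\eps'(x)\vv(x,\rho)+q(x)\vv(x,\rho)$ times a test expansion, and cancellation forces $q=-2\eps'$ in $L_2$. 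Equivalently, one may verify that the transformation kernel $K(x,t)$ in $\vv(x,\rho)=\tilde\vv(x,\rho)+\int_0^x K(x,t)\tilde\vv(t,\rho)\,dt$ satisfies $K(x,x)=h-\eps(x)$ and then invoke the classical identity $q(x)=2\frac{d}{dx}K(x,x)$. The technical core is justifying termwise twice-differentiation: the naive bound on $\eps''$ diverges, so one must first regroup $\al_{k0}\vv_{k0}\tilde\vv_{k0}-\al_{k1}\vv_{k1}\tilde\vv_{k1}$ against the leading oscillation $\tfrac{\al_{k1}}{2}(1+\cos 2(k-1)x)$ to gain two extra powers of $k^{-1}$, after which the $l_2$-asymptotics of \eqref{asympt} suffice.
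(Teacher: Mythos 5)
The paper does not actually prove this proposition: it is imported from \cite{FY01} (Lemma 1.6.5 there), where it is obtained within the contour-integration framework of the method of spectral mappings, under the normalization $\om = \tilde\om$ that is implicit throughout Section~\ref{sec:maineq}. Your overall strategy --- read the recovery formulas off \eqref{relvv} and its $x$-derivative, and get the $W_2^1$-convergence from \eqref{asympt} after extracting the leading oscillation --- is the right one, and your derivation of $h=-\eps(0)$ is correct: at $x=0$ one has $\tilde D(0,\rho,\theta)=0$, $\tfrac{d}{dx}\tilde D(x,\rho,\theta)|_{x=0}=1$, $\vv_{ki}(0)=1$, so the differentiated identity gives $0=h+\sum_k(\al_{k0}-\al_{k1})=h+\eps(0)$.

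The step for $H=\eps(\pi)$, however, contains a genuine error. You claim the sums ``collapse'' because $\vv_{k0}'(\pi)+H\vv_{k0}(\pi)=0$ ``together with the analogue for the model problem''. The first family of terms does vanish, since $\rho_{k0}=\rho_k$ are eigenvalues of $L$; but the second family involves $\vv_{k1}=\vv(\cdot,\tilde\rho_k)$ --- the solution of the \emph{perturbed} equation at the \emph{unperturbed} frequency --- for which $\vv_{k1}'(\pi)+H\vv_{k1}(\pi)=\Delta(\tilde\rho_k)\ne 0$ in general; the model boundary condition applies to $\tilde\vv_{k1}$, which does not occur in the sum. Hence a whole series $\sum_k \al_{k1}\tilde D(\pi,\rho,\tilde\rho_k)\Delta(\tilde\rho_k)$ survives and your conclusion does not follow. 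The repair is to evaluate at $\rho=\tilde\rho_n$: then $\al_{k1}\tilde D(\pi,\tilde\rho_n,\tilde\rho_k)=\de_{nk}$, the surviving series reduces to the single term $\Delta(\tilde\rho_n)$, which exactly cancels the term $\vv'(\pi,\tilde\rho_n)+H\vv(\pi,\tilde\rho_n)=\Delta(\tilde\rho_n)$ on the right, while $\tilde\vv'(\pi,\tilde\rho_n)=0$ on the left; this yields $H(-1)^{n-1}=(-1)^{n-1}\eps(\pi)$. Finally, the core identity $q=-2\eps'$ is only announced, not proved: you defer precisely the step (justifying the regrouped termwise double differentiation, or equivalently the diagonal identity for the transformation kernel) that constitutes the actual content of the lemma. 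Moreover, your stated kernel identity $K(x,x)=h-\eps(x)$ is inconsistent with your own formula $\eps(0)=-h$: since $K(0,0)=h$ for the kernel normalized by $\vv(0,\rho)=1$, $\vv'(0,\rho)=h$, the correct identity is $K(x,x)=-\eps(x)$. As it stands, the proposal establishes the two boundary formulas only after the fix above and leaves the recovery of $q$ at the level of a plan.
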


It is inconvenient to use the system \eqref{mainvv} for the investigation of the inverse spectral problem, because the series in \eqref{mainvv} converges only ``with brackets''. In order to achieve the absolute convergence of the series, Freiling and Yurko \cite{FY01} have transformed the system \eqref{mainvv} to the linear main equation in the Banach space of infinite bounded sequences. Here, we use another transform to obtain the so-called \textit{modified main equation}, which is more convenient for studying the uniform stability of the inverse problem.

Denote 
$$
\hat \rho_n := \rho_{n0} - \rho_{n1}, \quad T_n := \begin{bmatrix}
\hat \rho_n & 1 \\
0 & 1
\end{bmatrix}
$$
Then
$$
T_n^{-1} = \begin{bmatrix}
\hat \rho_n^{-1} & -\hat \rho_n^{-1} \\
0 & 1
\end{bmatrix} \quad \text{for} \:\: \rho_{n0} \ne \rho_{n1}.
$$
For changing variables in \eqref{mainvv}, introduce the functions
\begin{gather} \nonumber
\dot \vv_{n1}(x) := \frac{d}{d\rho} \vv(x, \rho)_{|\rho = \rho_{n1}}, \quad \dot D(x, \rho_{n1}, \rho_{kj}) := \frac{d}{d\rho} D(x, \rho, \rho_{kj})_{|\rho = \rho_{n1}}, \\ \label{defpsi}
\begin{bmatrix}
\psi_{n0}(x) \\
\psi_{n1}(x)
\end{bmatrix} := T_n^{-1}
\begin{bmatrix}
\vv_{n0}(x) \\
\vv_{n1}(x)
\end{bmatrix}, \quad \rho_{n0} \ne \rho_{n1}, \qquad
\begin{bmatrix}
\psi_{n0}(x) \\
\psi_{n1}(x)
\end{bmatrix}
= \begin{bmatrix}
\dot \vv_{n1}(x) \\
\vv_{n1}(x)
\end{bmatrix}, \quad \rho_{n0} = \rho_{n1}, \\ \label{defR1}
\begin{bmatrix}
R_{n0,k0}(x) & R_{n0,k1}(x) \\
R_{n1,k0}(x) & R_{n1,k1}(x)
\end{bmatrix} = 
T_n^{-1} \begin{bmatrix}
\al_{k0} D(x, \rho_{n0}, \rho_{k0}) & -\al_{k1} D(x, \rho_{n0}, \rho_{k1}) \\
\al_{k0} D(x, \rho_{n1}, \rho_{k0}) & -\al_{k1} D(x, \rho_{n1}, \rho_{k1})
\end{bmatrix} T_k, \quad \rho_{n0} \ne \rho_{n1}, \\ \label{defR2}
\begin{bmatrix}
R_{n0,k0}(x) & R_{n0,k1}(x) \\
R_{n1,k0}(x) & R_{n1,k1}(x)
\end{bmatrix} = 
\begin{bmatrix}
\al_{k0} \dot D(x, \rho_{n1}, \rho_{k0}) & -\al_{k1} \dot D(x, \rho_{n1}, \rho_{k1}) \\
\al_{k0} D(x, \rho_{n1}, \rho_{k0}) & -\al_{k1} D(x, \rho_{n1}, \rho_{k1})
\end{bmatrix} T_k, \quad \rho_{n0} = \rho_{n1}.
\end{gather}
Similarly, define the functions $\tilde \psi_{ni}(x)$ and $\tilde R_{ni,kj}(x)$ by using $\tilde \vv$ and $\tilde D$ instead of $\vv$ and $D$, respectively.

The following estimates can be obtained similarly to (1.6.29)--(1.6.30) in \cite{FY01}:
\begin{equation} \label{estpsiR}
|\psi_{ni}(x)|, \, |\tilde \psi_{ni}(x)| \le C, \quad |R_{ni,kj}(x)|, \, |\tilde R_{ni,kj}(x)| \le \frac{C \xi_k}{|n - k| + 1}, \quad n,k \ge 1,
\end{equation}
where the constant $C$ depends on the problem $L$.
We discuss the derivation of \eqref{estpsiR} in more detail in Sections~\ref{sec:ubound} and~\ref{sec:ustab}, where the uniform dependence for the constant $C$ on the parameters of the problem $L$ and on the spectral data $\{ \rho_n, \al_n \}_{n \ge 1}$ is analyzed.

The following lemma shows the reduction of conditionally convergent series in Proposition~\ref{prop:contour} to absolutely convergent series.

\begin{lem}
The following relations hold:
\begin{gather} \label{mainpsi}
\tilde \psi_{ni}(x) = \psi_{ni}(x) + \sum_{k = 1}^{\infty} \sum_{j = 0}^{1} \tilde R_{ni,kj}(x) \psi_{kj}(x), \quad n \ge 1, \: i = 0,1, \\ \label{mainR}
R_{ni,kj}(x) - \tilde R_{ni,kj}(x) + \sum_{l = 1}^{\infty} \sum_{s = 0}^1 \tilde R_{ni,ls}(x) R_{ls,kj}(x) = 0, \quad n,k \ge 1, \: i,j = 0, 1,
\end{gather}
where the series converge absolutely and uniformly with respect to $x \in [0,\pi]$.
\end{lem}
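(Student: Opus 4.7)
The plan is to derive \eqref{mainpsi} and \eqref{mainR} from Proposition~\ref{prop:contour} by applying the linear change of variables $\vv_k\mapsto\psi_k$ induced by the matrices $T_k$, and then to upgrade the convergence from ``with brackets'' to absolute by invoking \eqref{estpsiR}.

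In the generic case, where $\rho_{n0}\ne\rho_{n1}$ and $\rho_{k0}\ne\rho_{k1}$ for all $n,k$, I would first bundle the two scalar identities in \eqref{mainvv} (for $i=0,1$) into one $2\times 1$ column identity of the form
\[
\tilde\vv_n(x) \;=\; \vv_n(x) \;+\; \sum_{k=1}^{\infty} \tilde{\mathcal D}_{nk}(x)\,A_k\,\vv_k(x),
\]
where $\vv_n=(\vv_{n0},\vv_{n1})^T$, $\tilde{\mathcal D}_{nk}(x)=\bigl[\tilde D(x,\rho_{ni},\rho_{kj})\bigr]_{i,j=0,1}$, and $A_k=\diag(\al_{k0},-\al_{k1})$. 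Multiplying on the left by $T_n^{-1}$ and inserting $T_kT_k^{-1}$ in front of each $\vv_k$ converts this, via \eqref{defpsi} and the tilde analogue of \eqref{defR1}, into $\tilde\psi_n=\psi_n+\sum_k\tilde R_{nk}\psi_k$, which is exactly the matrix form of \eqref{mainpsi}. For \eqref{mainR}, I would evaluate \eqref{relD} at $\rho=\rho_{ni}$, $\theta=\rho_{kj}$ to obtain the $2\times 2$ matrix identity $\mathcal D_{nk}-\tilde{\mathcal D}_{nk}+\sum_l\tilde{\mathcal D}_{nl}A_l\mathcal D_{lk}=0$, then multiply on the left by $T_n^{-1}$ and on the right by $A_kT_k$, inserting $T_lT_l^{-1}$ in each summand; this rearranges precisely into the matrix form of \eqref{mainR}.

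Coincident indices $\rho_{n0}=\rho_{n1}$ (or $\rho_{k0}=\rho_{k1}$) must be handled separately since the corresponding $T$ is singular. For such $n$, the two rows $i=0$ and $i=1$ of \eqref{mainvv} collapse to the same identity, so I would replace the $i=0$ row by the $\rho$-derivative of \eqref{relvv} evaluated at $\rho=\rho_{n1}$, keeping the $i=1$ row as the direct evaluation. In the interior sum, for each $k$ with $\rho_{k0}=\rho_{k1}$ the two terms $\al_{k0}\tilde D(x,\cdot,\rho_{k0})\vv_{k0}-\al_{k1}\tilde D(x,\cdot,\rho_{k1})\vv_{k1}$ merge into a single derivative term; expanding $\vv(x,\rho_{k0})-\vv(x,\rho_{k1})$ and $\tilde D(x,\cdot,\rho_{k0})-\tilde D(x,\cdot,\rho_{k1})$ to first order in $\rho_{k0}-\rho_{k1}$ reproduces exactly the entries introduced in \eqref{defR2} and \eqref{defpsi}. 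The same $\rho$- and $\theta$-differentiation argument applied to \eqref{relD} covers every combination of coincidences in \eqref{mainR}.

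Finally, absolute and uniform convergence on $[0,\pi]$ follows from \eqref{estpsiR}: with $\{\xi_k\}\in l_2$ dictated by the asymptotics \eqref{asympt}, the Cauchy--Schwarz inequality yields
\[
\sum_{k=1}^{\infty}\sum_{j=0}^{1}\bigl|\tilde R_{ni,kj}(x)\,\psi_{kj}(x)\bigr|\;\le\; C\sum_{k=1}^{\infty}\frac{\xi_k}{|n-k|+1}\;<\;\infty
\]
uniformly in $x\in[0,\pi]$, and the double series in \eqref{mainR} is controlled analogously by the convolution bound $\sum_l \xi_l/((|n-l|+1)(|l-k|+1))$. The principal obstacle is the bookkeeping in the coincident cases: one must verify that the singular-limit formulas \eqref{defpsi}, \eqref{defR2} reproduce exactly the matrix entries obtained by differentiating the nonsingular ones, so that the ``merger'' of two rows or two columns introduces no spurious terms. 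Once this is checked, the upgrade from conditional convergence of Proposition~\ref{prop:contour} to absolute convergence is automatic, which is precisely the purpose of introducing the $T_n$.
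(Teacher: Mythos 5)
Your proposal is correct and follows essentially the same route as the paper: recast \eqref{mainvv} (and \eqref{relD} evaluated at $\rho=\rho_{ni}$, $\theta=\rho_{kj}$) in $2\times2$ block form, conjugate by $T_n^{-1}$ and $T_k$, treat the coincident case $\rho_{n0}=\rho_{n1}$ by differentiating \eqref{relvv} in $\rho$, and get absolute uniform convergence from \eqref{estpsiR}. The only cosmetic difference is that the paper disposes of the interior-sum coincidences simply by noting that $\vv_k = T_k\psi_k$ (relation \eqref{vvpsi}) holds verbatim in both cases, so no first-order expansion or term merging is needed there.
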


\begin{proof}
The relations \eqref{mainvv} can be rewritten as follows:
$$
\begin{bmatrix}
\tilde \vv_{n0}(x) \\ \tilde \vv_{n1}(x)
\end{bmatrix} = 
\begin{bmatrix}
\vv_{n0}(x) \\ \vv_{n1}(x) 
\end{bmatrix} + \sum_{k = 1}^{\infty} \begin{bmatrix}
\al_{k0} \tilde D(x, \rho_{n0}, \rho_{k0}) & -\al_{k1} \tilde D(x, \rho_{n0}, \rho_{k1}) \\
\al_{k0} \tilde D(x, \rho_{n1}, \rho_{k0}) & -\al_{k1} \tilde D(x, \rho_{n1}, \rho_{k1})
\end{bmatrix} \begin{bmatrix} \vv_{k0}(x) \\ \vv_{k1}(x) \end{bmatrix}, \quad n \ge 1.
$$
It follows from \eqref{defpsi} that 
\begin{equation} \label{vvpsi}
\begin{bmatrix} \vv_{k0}(x) \\ \vv_{k1}(x) \end{bmatrix} = T_k \begin{bmatrix} \psi_{k0}(x) \\ \psi_{k1}(x) \end{bmatrix}
\end{equation}
for the both cases $\rho_{k0} \ne \rho_{k1}$ and $\rho_{k0} = \rho_{k1}$. Consequently, taking the notations \eqref{defpsi} and \eqref{defR1} into account, we directly arrive at the relation
\begin{equation} \label{matrpsi}
\begin{bmatrix}
\tilde \psi_{n0}(x) \\ \tilde \psi_{n1}(x)
\end{bmatrix} = 
\begin{bmatrix}
\psi_{n0}(x) \\ \psi_{n1}(x) 
\end{bmatrix} + \sum_{k = 1}^{\infty} \begin{bmatrix}
\tilde R_{n0, k0}(x) & \tilde R_{n0,k1}(x) \\
\tilde R_{n1,k0}(x) & \tilde R_{n1,k1}(x)
\end{bmatrix} \begin{bmatrix} \psi_{k0}(x) \\ \psi_{k1}(x) \end{bmatrix}
\end{equation}
for $\rho_{n0} \ne \rho_{n1}$. For the case $\rho_{n0} = \rho_{n1}$, we get by differentiation of \eqref{relvv} that
$$
\dot{\tilde \vv}_{n1}(x) = \dot \vv_{n1}(x) + \sum_{k = 1}^{\infty} \bigl(\al_{k0} \dot{\tilde D}(x, \rho_{n1}, \rho_{k0}) \vv_{k0}(x) - \al_{k1} \dot{\tilde D}(x, \rho_{n1}, \rho_{k1}) \vv_{k1}(x)\bigr).
$$
The latter relation together with \eqref{mainvv} and the definitions \eqref{defpsi}, \eqref{defR1} imply \eqref{matrpsi} for $\rho_{n0} = \rho_{n1}$. The relation \eqref{mainR} is proved analogously by using \eqref{relD}, \eqref{defR1}, and \eqref{defR2}.
The absolute convergence of the series in \eqref{mainpsi} and \eqref{mainR} follows from the estimates \eqref{estpsiR}.
\end{proof}

Consider the Banach space $m$ of infinite bounded sequences $a = [a_{ni}]_{n \ge 1, \, i = 0, 1}$ with the norm $\| a \|_m = \sup_{n,i} |a_{ni}|$. Let $x \in [0,\pi]$ be fixed. Define the infinite vector
\begin{equation} \label{vecpsi}
\psi(x) = [\psi_{10}(x), \psi_{11}(x), \psi_{20}(x), \psi_{21}(x), \dots]
\end{equation}
and the linear operator $R(x)$ acting on an element $a \in m$ by the rule
\begin{equation} \label{operR}
(R(x) a)_{ni} = \sum_{k = 1}^{\infty} \sum_{j = 0}^1 R_{ni,kj}(x) a_{kj}, \quad n \ge 1, \, i = 0, 1.
\end{equation}
Analogously, define $\tilde \psi(x)$ and $\tilde R(x)$.
According to the estimates \eqref{estpsiR}, for each fixed $x \in [0,\pi]$, the vectors $\psi(x)$ and $\tilde \psi(x)$ belong to $m$ and the operators $R(x)$ and $\tilde R(x)$ are bounded from $m$ to $m$: 
\begin{equation} \label{estopR}
\| R(x) \|_{m \to m} = \sup_{n,i} \sum_{k,j} |R_{ni,kj}(x)| \le C \sup_n \sum_{k = 1}^{\infty} \frac{\xi_k}{|n-k| + 1} < \infty.
\end{equation}
Moreover, $R(x)$ is continuous with respect to $x \in [0,\pi]$ in the operator norm.

Thus, the relations \eqref{mainpsi} and \eqref{mainR} can be rewritten as follows:
$$
\tilde \psi(x) = (I + \tilde R(x)) \psi(x), \quad 
R(x) - \tilde R(x) + \tilde R(x) R(x) = 0,
$$
where $I$ is the identity operator in $m$. The second relation implies
$$
(I + \tilde R(x))(I - R(x)) = I,
$$
which allows us to find the inverse operator to $(I + \tilde R(x))$.
As a result, we arrive at the following theorem.

\begin{thm} \label{thm:main}
For each fixed $x \in [0,\pi]$, the vector $\psi(x) \in m$ satisfies the equation
\begin{equation} \label{main}
\tilde \psi(x) = (I + \tilde R(x)) \psi(x), \quad x \in [0,\pi],
\end{equation}
in the Banach space $m$. Moreover, the operator $(I + \tilde R(x))$ has a bounded inverse, which is given by the formula
\begin{equation} \label{invR}
(I + \tilde R(x))^{-1} = I - R(x),
\end{equation}
so equation \eqref{main} is uniquely solvable.
\end{thm}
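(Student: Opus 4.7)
The plan is to translate the coordinatewise identities \eqref{mainpsi} and \eqref{mainR} from the preceding lemma directly into operator identities in the Banach space $m$, and then build the inverse of $I + \tilde R(x)$ explicitly. Throughout the argument, fix $x \in [0,\pi]$.

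First, I would use the estimates \eqref{estpsiR} together with the bound \eqref{estopR} to confirm that the vectors $\psi(x), \tilde \psi(x)$ defined in \eqref{vecpsi} lie in $m$ and that the operators $R(x), \tilde R(x)$ acting via \eqref{operR} are bounded on $m$, with the defining double sums converging absolutely. Under these circumstances the componentwise identity \eqref{mainpsi} becomes exactly the statement $\tilde \psi(x) = (I + \tilde R(x)) \psi(x)$ in $m$, i.e.\ equation \eqref{main}. In the same way, the entrywise identity \eqref{mainR} translates into the operator relation $R(x) - \tilde R(x) + \tilde R(x) R(x) = 0$, which upon rearrangement yields $(I + \tilde R(x))(I - R(x)) = I$, so that $I - R(x)$ is a bounded right inverse of $I + \tilde R(x)$.

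To upgrade this to a two-sided inverse I would observe that Proposition~\ref{prop:contour} is obtained by contour integration and uses no property that distinguishes $L$ from the model problem $\tilde L$; the same derivation applies with the two problems interchanged. Running the subsequent construction with $L$ and $\tilde L$ swapped produces the companion identity $R(x) - \tilde R(x) + R(x) \tilde R(x) = 0$, which rearranges to $(I - R(x))(I + \tilde R(x)) = I$. Combining the two one-sided identities gives formula \eqref{invR} and, in turn, the unique solvability of \eqref{main}: the solution is $\psi(x) = (I - R(x)) \tilde \psi(x)$.

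The main obstacle is this symmetry step, namely verifying carefully that swapping the two problems in the derivation of \eqref{mainR} produces precisely the sign pattern $R(x) - \tilde R(x) + R(x) \tilde R(x) = 0$ needed for a left inverse, rather than some other combination. This requires tracking the substitutions $\rho = \rho_{ni}$, $\theta = \rho_{kj}$ and the conjugations by $T_n, T_k$ in \eqref{defR1}, \eqref{defR2} through the swap; everything remaining is routine manipulation of absolutely convergent series, legitimized by \eqref{estpsiR}.
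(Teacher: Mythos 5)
Your proposal is correct and follows essentially the same route as the paper: translate \eqref{mainpsi} and \eqref{mainR} into the operator identities $\tilde \psi(x) = (I + \tilde R(x))\psi(x)$ and $(I + \tilde R(x))(I - R(x)) = I$, using the absolute convergence guaranteed by \eqref{estpsiR} and \eqref{estopR}. Your additional step of deriving the left-inverse identity $(I - R(x))(I + \tilde R(x)) = I$ by interchanging the roles of $L$ and $\tilde L$ in the contour-integration relations is sound and in fact addresses a point the paper leaves implicit (it records only the right-inverse relation before asserting invertibility); an alternative way to close that same gap is to note that $\tilde R(x)$ is a uniform limit of finite-rank operators, hence compact, so $I + \tilde R(x)$ is Fredholm of index zero and a bounded right inverse is automatically two-sided.
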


\begin{remark}
Note that $\psi_{ni}(x)$ and $R_{ni,kj}(x)$, which are defined by \eqref{defpsi}, \eqref{defR1}, and \eqref{defR2}, are continuous as $\rho_{n0} \to \rho_{n1}$. For example,
$$
\psi_{n0}(x) = \frac{\vv(x, \rho_{n0}) - \vv(x, \rho_{n1})}{\rho_{n0} - \rho_{n1}} \to \dot \vv(x, \rho)_{|\rho = \rho_{n1}}, \quad \rho_{n0} \to \rho_{n1}.
$$
This feature is important for our analysis of the uniform stability. In previous literature on the method of spectral mappings (see, e.g., \cite[Section 1.6]{FY01}), the change of variables was introduced in another way, with discontinuity at $\rho_{n0} = \rho_{n1}$:
$$
\begin{bmatrix}
\psi_{n0}(x) \\ \psi_{n1}(x)
\end{bmatrix} = \begin{bmatrix}
\chi_n & -\chi_n \\
0 & 1
\end{bmatrix} \begin{bmatrix}
\vv_{n0}(x) \\ \vv_{n1}(x)
\end{bmatrix}, \quad \chi_n := \begin{cases}
                                    \xi_n^{-1}, & \xi_n \ne 0, \\
                                    0, & \xi_n = 0,
                                \end{cases},
$$
where $\xi_n := |\rho_n - \tilde \rho_n| + |\al_n - \tilde \al_n|$. However, the invertibility of the operator $(I + \tilde R(x))$ in this paper is equivalent to the invertibility of the operator $(E + \tilde H(x))$ in \cite{FY01}, because it actually means the unique solvability of the system \eqref{mainvv}. Furthermore, the both operators $\tilde R(x)$ and $\tilde H(x)$ possess the approximation property and their components fulfill the estimate \eqref{estpsiR}. Therefore, their invertibility can be investigated similarly. 
\end{remark}

Analogously to Theorem 1.6.3 in \cite{FY01}, we obtain Theorem~\ref{thm:nsc} on the
necessary and sufficient conditions for the solvability of the inverse problem in the non-self-adoint case.

\begin{proof}[Proof of Theorem~\ref{thm:nsc}]
By necessity, the existence of the bounded inverse operator $(I + \tilde R(x))^{-1}$ follows from the relation \eqref{invR} in Theorem~\ref{thm:main}. 
By sufficiency, we require that the bounded operator $(I + \tilde R(x))^{-1}$ exists. Then, the main equation \eqref{main} has the unique solution $\psi(x) \in m$ for each fixed $x \in [0,\pi]$. Using its components $\psi_{ni}(x)$, one can construct the functions $\vv_{ni}(x)$ by \eqref{vvpsi} and find $q$, $h$, and $H$ via Proposition~\ref{prop:recqhH}. Relying on the asymptotics \eqref{asympt}, we get that the series for $q$ converges in $L_2(0,\pi)$ and for $h, \, H$, in $\mathbb C$, which concludes the proof.
\end{proof}

Note that the conditions $\rho_n \ne \rho_k$ ($n \ne k$) and $\al_n \ne 0$ are not required in Theorem~\ref{thm:nsc} by sufficiency, since they follow from the invertibility of the operator $(I + \tilde R(x))$ (see Lemma~\ref{lem:noninv}).

\section{Solvability of the main equation} \label{sec:nsc}

Let $S = \{ \rho_n, \al_n \}_{n \ge 1}$ be an arbitrary sequence of $\mathcal S$, not necessarily being spectral data for a certain problem $L$. In this section, we discuss the invertibility of the operator $(I + \tilde R(x))$ constructed by $S$ via formulas \eqref{defR1}, \eqref{defR2}, and \eqref{operR}. In particular, we prove Theorem~\ref{thm:suff}, which establishes the unique solvability of the inverse problem for two new classes of spectral data.

For investigating the invertibility of the operator $(I + \tilde R(x))$, we rely on the following auxiliary lemma.

\begin{lem} \label{lem:homo}
Let $x \in [0,\pi]$ be fixed. Then, the operator $(I + \tilde R(x))$ has a bounded inverse in $m$ if and only if the infinite system of equations
\begin{equation} \label{sysgamma}
\ga_{ni} + \sum_{k = 1}^{\infty} \bigl(\al_{k0} \tilde D(x, \rho_{ni}, \rho_{k0}) \ga_{k0} - \al_{k1} \tilde D(x, \rho_{ni}, \rho_{k1}) \ga_{k1} \bigr) = 0, \quad n \ge 1, \, i = 0, 1,
\end{equation}
has the only zero solution $\{ \ga_{ni} \}$ satisfying the estimates
$$
|\ga_{ni}| \le 1, \quad |\ga_{n0} - \ga_{n1}| \le |\hat \rho_n|, \quad n \ge 1, \, i = 0, 1.
$$
\end{lem}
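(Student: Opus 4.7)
The plan is to convert the infinite system \eqref{sysgamma}, whose unknowns $\gamma_{ni}$ play the role of $\vv_{ni}$ in \eqref{mainvv}, into the homogeneous operator equation $(I + \tilde R(x))\beta = 0$ in $m$, whose unknowns $\beta_{ni}$ play the role of $\psi_{ni}$. The link between them is the same matrix change of variables $T_n$ used in Section~\ref{sec:maineq} to build the modified main equation: set $\gamma_{n0} = \hat\rho_n \beta_{n0} + \beta_{n1}$ and $\gamma_{n1} = \beta_{n1}$ when $\hat\rho_n \ne 0$, with the appropriate limiting definition (matching \eqref{defR2}) when $\hat\rho_n = 0$.

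First, I would check that this change of variables identifies the two admissible classes. A sequence $\beta \in m$ with $\|\beta\|_m$ sufficiently small yields $\gamma$ with $|\gamma_{ni}| \le 1$ and $|\gamma_{n0} - \gamma_{n1}| = |\hat\rho_n||\beta_{n0}| \le |\hat\rho_n|$, since $\sup_n |\hat\rho_n|$ is finite by the asymptotics \eqref{asympt}; conversely, any $\gamma$ satisfying the two bounds in the lemma yields $\beta = T_n^{-1}\gamma$ with $\|\beta\|_m \le 1$. Since both systems are linear, arbitrary scaling is allowed, and the "only zero solution" condition stated in the lemma is therefore equivalent to injectivity of $I + \tilde R(x)$ on $m$.

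Next, I would verify that left-multiplying the $n$-th pair of equations in \eqref{sysgamma} by $T_n^{-1}$ and substituting $\gamma = T_k \beta$ componentwise on the right produces precisely the $n$-th pair of component equations of $(I + \tilde R(x))\beta = 0$. This is exactly the computation that turned \eqref{mainvv} into \eqref{matrpsi} in the proof of the modified main equation, because the definitions \eqref{defR1} and \eqref{defR2} of $\tilde R_{ni,kj}$ were tailored for this reduction. The case $\hat\rho_n = 0$ requires replacing one of the two equations of \eqref{sysgamma} by its $\rho$-derivative, matching the $\dot D$-based entries of \eqref{defR2}; the constraint $|\gamma_{n0} - \gamma_{n1}| \le |\hat\rho_n| = 0$ ensures that no information is lost in this limiting passage.

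With these preparations the forward direction is immediate: if $(I + \tilde R(x))^{-1}$ is bounded, any $\gamma$ in the lemma's class gives a $\beta \in m$ solving $(I + \tilde R(x))\beta = 0$, whence $\beta = 0$ and $\gamma = 0$. For the converse, injectivity of $I + \tilde R(x)$ on $m$ follows from the same correspondence, but upgrading injectivity to the existence of a bounded inverse requires a Fredholm-type argument, which is the main technical point. I would exploit the approximation property of $\tilde R(x)$ indicated in the remark at the end of Section~\ref{sec:maineq}: the estimate \eqref{estpsiR} together with the decay of $\xi_k$ permit $\tilde R(x)$ to be approximated in the operator norm on $m$ by finite-rank operators, so it is compact and the classical Fredholm alternative delivers the bounded inverse from injectivity.
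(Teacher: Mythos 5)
Your proposal follows essentially the same route as the paper's proof: the paper likewise approximates $\tilde R(x)$ in operator norm by the finite-rank truncations $\tilde R^N(x)$ (using the estimate \eqref{estpsiR}), invokes the Fredholm theorem to reduce bounded invertibility to the homogeneous equation $(I+\tilde R(x))\beta=0$ having only the zero solution in $m$, and then applies the change of variables $\gamma = T_n\beta$ to identify that homogeneous system with \eqref{sysgamma} together with the stated bounds. Your write-up is more explicit than the paper's (in particular about matching the two admissible classes and about the degenerate case $\hat\rho_n=0$), but the underlying argument is identical.
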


\begin{proof}
According to the definition \eqref{operR} and the estimate \eqref{estpsiR} for $\tilde R_{ni,kj}(x)$, the operator $\tilde R(x)$ is approximated by the finite-dimensional operators $\tilde R^N(x)$, $N \ge 1$, given by
$$
(\tilde R^N(x) a)_{ni} = \sum_{k = 1}^N \sum_{j = 0}^1 \tilde R_{ni,kj}(x) a_{kj}.
$$
Therefore, by the Fredholm Theorem, the operator $(I + \tilde R(x))$ has a bounded inverse in $m$ if and only if the homogeneous system
\begin{equation} \label{homo}
\be_{ni} + \sum_{k = 1}^{\infty} \sum_{j = 0}^1 \tilde R_{ni,kj}(x) \be_{kj} = 0, \quad n \ge 1, \, i = 0, 1,
\end{equation}
has only the zero solution $\{ \be_{ni} \}$ in $m$: $|\be_{ni}| \le C$. The change of variables
$$
\begin{bmatrix}
\ga_{n0} \\ \ga_{n1}
\end{bmatrix} = 
T_n \begin{bmatrix}
\be_{n0} \\ \be_{n1}
\end{bmatrix}
$$
transforms the system \eqref{homo} to \eqref{sysgamma}.
\end{proof}

\begin{lem} \label{lem:noninv}
Suppose that $\rho_p = \rho_s$ for some $p \ne s$ or $\al_r = 0$ for some $r \ge 1$. Then, the operator $(I + \tilde R(\pi))$ is non-invertible in $m$.
\end{lem}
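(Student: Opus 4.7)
The plan is to invoke Lemma~\ref{lem:homo} and exhibit a non-trivial bounded $\{\ga_{ni}\}$ solving the homogeneous version of \eqref{sysgamma} at $x=\pi$. The crucial simplification at $x=\pi$ is the orthogonality $\tilde D(\pi, n-1, k-1) = \de_{nk}/\tilde\al_n$: in the $(n,1)$-equation of \eqref{sysgamma} this causes $\sum_k \tilde\al_k \tilde D(\pi, n-1, k-1)\ga_{(k,1)}$ to telescope into $\ga_{(n,1)}$ and cancel the diagonal term, so the $(n,1)$-equations collapse to $\sum_k \al_k \tilde D(\pi, n-1, \rho_k)\ga_{(k,0)} = 0$ for every $n\ge 1$. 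By completeness of $\{\cos((n-1)t)\}_{n\ge 1}$ in $L_2(0,\pi)$ this is equivalent to
\[
F(t) := \sum_k \al_k \ga_{(k,0)} \cos(\rho_k t) \equiv 0 \quad \text{in } L_2(0,\pi).
\]
The remaining $(n,0)$-equations then take the moment-problem form $\int_0^\pi \cos(\rho_n t) B(t)\,dt = A_n$, where $B(t) := \sum_k \tilde\al_k \ga_{(k,1)}\cos((k-1)t)$ and $A_n := \ga_{(n,0)} + \sum_k \al_k \tilde D(\pi, \rho_n, \rho_k)\ga_{(k,0)}$ is fixed by the chosen $\{\ga_{(k,0)}\}$. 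This decouples the construction into a choice of $\{\ga_{(k,0)}\}$ that kills $F$ and then an $L_2$ moment problem for $B$.

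In Case 1 ($\al_r = 0$) I would set $\ga_{(r,0)} := 1$ and $\ga_{(k,0)} := 0$ for $k\ne r$: the vanishing factor $\al_r$ forces $F\equiv 0$ free of charge, and $A_n = \de_{nr}$. Because the $\rho_n$ are distinct and satisfy $\rho_n = n-1 + O(1/n)$ with $l_2$-remainder, classical Bari/Markus-type perturbation theorems give that $\{\cos(\rho_n t)\}_{n\ge 1}$ is a Riesz basis of $L_2(0,\pi)$, so the biorthogonal system yields $B\in L_2(0,\pi)$ realising the prescribed inner products, with $l_2$ cosine-Fourier coefficients $b_k$ producing bounded $\ga_{(k,1)} = b_k/\tilde\al_k$; non-triviality comes from $\ga_{(r,0)} = 1\ne 0$.

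In Case 2 ($\rho_p = \rho_s$, $p\ne s$) I would take $\ga_{(k,0)} := 0$ for every $k$, so $F\equiv 0$ trivially and each $A_n = 0$. The coincidence $\cos(\rho_p t) = \cos(\rho_s t)$ leaves the family $\{\cos(\rho_n t)\}_{n\ge 1}$ one function short of spanning; by the same perturbation theorems its closed linear span is a proper codim-$1$ subspace of $L_2(0,\pi)$, so any non-zero $B$ in its orthogonal complement gives non-trivial bounded $\ga_{(k,1)}$'s satisfying all of the equations.

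The main obstacle is the final conversion of the produced $\{\ga_{ni}\}$ into a non-zero bounded $\{\be_{ni}\}\in\ker(I+\tilde R(\pi))\subset m$: via $\be = T^{-1}\ga$ this requires the stronger bound $|\ga_{(k,0)} - \ga_{(k,1)}| \le C|\hat\rho_k|$ demanded in Lemma~\ref{lem:homo}, and hence the Fourier coefficients $b_k$ of $B$ must decay like $|\hat\rho_k| = O(1/k)$ rather than merely being in $l_2$. The remedy is to work in the Sobolev scale $W_2^1(0,\pi)$, where the perturbed cosine system is again a Riesz basis under the $l_2^1$-asymptotics \eqref{asympt}, and to exhibit $B$ in $W_2^1$: in the model-like subcase $\rho_n\equiv n-1$ one may simply take $B = \tilde\al_r\cos((r-1)t)$ in Case 1, a smooth function, and the general case follows by smallness of the perturbation in the appropriate norm.
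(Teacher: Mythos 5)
Your construction is the same as the paper's: you invoke Lemma~\ref{lem:homo}, use the orthogonality $\tilde\al_k\tilde D(\pi,\tilde\rho_n,\tilde\rho_k)=\de_{nk}$ to collapse the $i=1$ equations into $\sum_k\al_k\tilde D(\pi,\tilde\rho_n,\rho_k)\ga_{k0}=0$, and then make exactly the paper's two choices --- $\ga_{k0}=\de_{kr}$ when $\al_r=0$ (reducing to a moment problem solved by the Riesz basis property of $\{\cos\rho_nt\}$) and $\ga_{k0}\equiv0$ when $\rho_p=\rho_s$ (reducing to the incompleteness of $\{\cos\rho_nt\}$). This part is correct and matches the paper, including the reconstruction of $\ga_{k1}$ as Fourier coefficients of the function $B$ with respect to the orthogonal basis $\{\cos((k-1)t)\}$.

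Where you diverge is the last paragraph, and there your instinct is better than your remedy. You are right that Lemma~\ref{lem:homo} demands the extra bound $|\ga_{n0}-\ga_{n1}|\le C|\hat\rho_n|$ before the solution can be pulled back to a nonzero kernel element of $(I+\tilde R(\pi))$ in $m$ --- a point the paper's own proof does not verify. But the Sobolev fix does not work: $B\in W_2^1$ only gives $\{k b_k\}\in l_2$, which neither yields the \emph{pointwise} bound $|b_k|\le C|\hat\rho_k|$ nor the exact equality $\ga_{n0}=\ga_{n1}$ forced at every index with $\rho_n=\tilde\rho_n$; and no ``smallness of the perturbation'' is available, since the hypothesis $\rho_p=\rho_s$ is a fixed, non-small deviation from the model data. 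The bound is in fact automatic for \emph{any} $L_2$ solution $B$ of your moment problem. Set $G(\rho):=\int_0^\pi B(t)\cos\rho t\,dt$; then $G(\tilde\rho_n)=\ga_{n1}$, while by construction $G(\rho_n)=A_n=\ga_{n0}$ in both cases ($A_n=\de_{nr}$ and $A_n=0$, respectively). Since $G$ is an even entire function of Paley--Wiener class with $|G'|\le C(\Omega)\|B\|_{L_2}$ on any horizontal strip (Bernstein's inequality, or the Schwarz-lemma argument the paper uses for $\tilde g_n$ and $\tilde G_n$ in Section~\ref{sec:ubound}), one gets
$$
|\ga_{n0}-\ga_{n1}|=|G(\rho_n)-G(\tilde\rho_n)|\le C|\rho_n-\tilde\rho_n|=C|\hat\rho_n|,
$$
which is exactly the missing estimate. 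Replace your final paragraph with this observation and the proof is complete and coincides with the paper's.
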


\begin{proof}
For each case, let us find a non-trivial solution of the system \eqref{sysgamma} for $x = \pi$. By virtue of \eqref{sd0} and \eqref{defD}, we have
$$
\tilde D(\pi, \rho_{n1}, \rho_{k1}) = \de_{nk}, \quad n \ge 1,
$$
where $\de_{nk}$ is the Kronecker delta.
Consequently, the equations \eqref{sysgamma} for $i = 1$ take the form
\begin{equation} \label{gamma1}
\sum_{k = 1}^{\infty} \al_{k0} \tilde D(\pi, \rho_{n1}, \rho_{k0}) \ga_{k0} = 0.
\end{equation}

\smallskip

\textit{Case 1:} $\rho_p = \rho_s$ for $p \ne s$. Put $\ga_{n0} = 0$ for all $n \ge 1$. Then, \eqref{gamma1} is satisfied and \eqref{sysgamma} for $i = 0$ takes the form
\begin{equation} \label{gamma0}
\sum_{k = 1}^{\infty} \al_{k1} \tilde D(\pi, \rho_{n0}, \rho_{k1}) \ga_{k1} = 0, \quad n \ge 1.
\end{equation}
Using \eqref{defD}, we transform \eqref{gamma0} as follows:
\begin{equation} \label{intgamma0}
\int_0^{\pi} \ga(x) \cos \rho_n x \, dx = 0, \quad n \ge 1, \qquad
\ga(x) := \sum_{k = 1}^{\infty} \tilde \al_k \ga_{k1} \cos \tilde \rho_k x.
\end{equation}
Due to our assumption, the sequence $\{ \cos \rho_n x \}_{n \ge 1}$ is incomplete in $L_2(0, \pi)$, so there exists a non-trivial function $\ga(x) \in L_2(0, \pi)$ satisfying \eqref{intgamma0}, and $\{ \ga_{k1} \}_{k \ge 1}$ can be found as the Fourier coefficients of $\ga(x)$ with respect to the orthogonal basis $\{ \cos \tilde \rho_k x \}_{k \ge 1}$.

\smallskip

\textit{Case 2:} the values $\{ \rho_n \}_{n \ge 1}$ are all distinct but $\al_r = 0$ for some $r \ge 1$. Put $\ga_{n0} = \de_{nr}$, $n \ge 1$. Then, \eqref{gamma1} is satisfied and the relations \eqref{sysgamma} for $i = 0$ take the form
\begin{equation*}
\sum_{k = 1}^{\infty} \al_{k1} \tilde D(\pi, \rho_{n0}, \rho_{k1}) \ga_{k1} = \de_{nr}, \quad n \ge 1.
\end{equation*}
The non-trivial solution $\{ \ga_{k1} \}_{k \ge 1}$ of this system can be uniquely found by the method similar to the Case 1.
\end{proof}

\begin{proof}[Proof of Theorem~\ref{thm:suff}]
Let us use Lemma~\ref{lem:homo} to prove the invertibility of the operator $(I + \tilde R(x))$. Fix $x \in [0,\pi]$ and consider a solution $\{ \ga_{ni} \}$ of the system \eqref{sysgamma}. Following the proof of Lemma 1.6.6 in \cite{FY01}, introduce the functions
\begin{align*}
\ga(\la) & := -\sum_{k = 1}^{\infty} \bigl( \al_{k0} \tilde D(x, \sqrt{\la}, \rho_{k0}) \ga_{k0} - \al_{k1} \tilde D(x, \sqrt{\la}, \rho_{k1}) \ga_{k1}\bigr), \\
G(\la) & := -\sum_{k = 1}^{\infty} \bigl( \al_{k0} \tilde E(x, \sqrt{\la}, \rho_{k0}) \ga_{k0} - \al_{k1} \tilde E(x, \sqrt{\la}, \rho_{k1}) \ga_{k1} \bigr),
\end{align*}
where
$$
\tilde E(x, \rho, \theta) := \frac{\tilde \Phi(x, \rho) \tilde \vv'(x, \theta) - \tilde \Phi'(x, \rho) \tilde \vv(x, \theta)}{\rho^2 - \theta^2}, \quad
\tilde \Phi(x, \rho) := \frac{\sin \rho(\pi - x)}{\rho \cos \rho \pi}.
$$

Applying the Residue Theorem to the function $\mathscr B(\la) := \overline{\ga(\overline{\la})} G(\la)$ on the contours $\Theta_N := \bigl\{ \la \colon |\la| = (N - 1/2)^2 \bigr\}$, we derive the relation
\begin{equation} \label{sumgamma}
\lim_{N \to \infty} \frac{1}{2\pi i} \oint_{\Theta_N} \mathscr B(\la) \, d\la = \sum_{n = 1}^{\infty} \al_{n0} |\ga_{n0}|^2 = 0.
\end{equation}

Let us show that the Conditions 1 and 2 of this theorem imply that $\ga_{n0} = 0$, $n \ge 1$, in \eqref{sumgamma}.
Under the Condition 1, the projections of $\al_n$ onto the bisector $\al = \tfrac{1}{2}(\al_{min} + \al_{max})$ are strictly positive, so the equality \eqref{sumgamma} is possible only if $\ga_{n0} = 0$ for all $n \ge 1$. Under the Condition 2, we get
$$
\mbox{Im} \left( \sum_{n = 1}^{\infty} \al_{n0} |\ga_{n0}|^2\right) = \sum_{n \in \mathbb N \setminus \mathcal Z} \mbox{Im}\, \al_n |\ga_{n0}|^2 = 0.
$$
Hence $\ga_{n0} = 0$ for all $n \in \mathbb N \setminus \mathcal Z$. Therefore, it remains from \eqref{sumgamma} that $\sum\limits_{n \in \mathcal Z} \al_n |\ga_{n0}|^2 = 0$. Since $\al_n < 0$, then $\ga_{n0} = 0$ for $n \in \mathcal Z$.

Proceeding similarly to the proof of Lemma 1.6.6 in \cite{FY01}, we conclude that the system \eqref{sysgamma} has only the trivial solution. Thus, Lemma~\ref{lem:homo} and Theorem~\ref{thm:nsc} yield the claim.
\end{proof}

\section{Uniform boundedness} \label{sec:ubound}

In this section, we prove that the solution $(q, h, H)$ of Inverse Problem~\ref{ip:simp} is uniformly bounded with respect to $S \in \mathring{B}_{\Omega, K}$ for positive constants $\Omega$ and $K$. This is an important auxiliary step in investigation of the uniform stability for the inverse problem. We follow the method for recovery of $(q, h, H)$ described in Section~\ref{sec:maineq} and step-by-step obtain uniform estimates for functions participating in the reconstruction.

Note that the definition of $\mathring{B}_{\Omega}$ from \eqref{defB} can be equivalently represented as follows:
\begin{equation} \label{defBO}
\mathring{B}_{\Omega} := \left\{ S \in \mathcal S \colon \| \{ \xi_n \}_{n \ge 1} \|_{l_2^1} \le \Omega\right\},
\end{equation}
where $\xi_n := \sqrt{|\rho_n - \tilde \rho_n|^2 + |\al_n - \tilde \al_n|^2}$. 

Suppose that $\Omega > 0$, and let $S = \{ \rho_n, \al_n \}_{n \ge 1}$ be any sequence in $\mathring{B}_{\Omega}$. Then, by using $S$ and the problem $\tilde L$, one can construct the functions $\tilde \psi_{ni}(x)$, $\tilde R_{ni,kj}(x)$ for $n, k \ge 1$, $i, j = 0, 1$ by \eqref{not} and the formulas analogous to \eqref{defpsi}, \eqref{defR1}-\eqref{defR2}, respectively.

\begin{lem} \label{lem:estt}
For $S \in \mathring{B}_{\Omega}$, the following estimates hold:
\begin{gather} \label{estvvt}
|\tilde \vv_{ni}(x)| \le C, \quad |\tilde \vv_{n0}(x) - \tilde \vv_{n1}(x)| \le C \xi_n, \quad
|\tilde \vv_{ni}'(x)| \le Cn, \quad |\tilde \vv'_{n0}(x) - \tilde \vv'_{n1}(x)| \le Cn \xi_n, \\ \label{estpsit}
|\tilde \psi_{ni}(x)| \le C, \quad |\tilde \psi_{ni}'(x)| \le C n, \\
\label{estRt}
|\tilde R_{ni,kj}(x)| \le \frac{C \xi_k}{|n - k| + 1}, \quad |\tilde R_{ni,kj}'(x)| \le C \xi_k,
\end{gather}
where $C = C(\Omega)$, $n,k \ge 1$, $i,j = 0, 1$, and $x \in [0,\pi]$. Consequently,
$$
\| \tilde \psi(x) \|_m \le C(\Omega), \quad \| \tilde R(x) \|_{m \to m} \le C(\Omega), \quad x \in [0,\pi].
$$
\end{lem}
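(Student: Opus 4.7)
The model problem has the explicit solution $\tilde\vv(x,\rho) = \cos\rho x$, so $\tilde\vv_{n0}(x) = \cos(\rho_n x)$ and $\tilde\vv_{n1}(x) = \cos((n-1)x)$. The starting point, and the source of every constant $C(\Omega)$ in the lemma, is the observation that the definition \eqref{defBO} of $\mathring B_\Omega$ forces $n\xi_n \le \|\{k\xi_k\}\|_{l_2^1} \le \Omega$ for every $n$; in particular $|\rho_n - (n-1)| \le \xi_n \le \Omega/n$, and since $\rho_{n1} = n-1$ is real this yields $|\mbox{Im}\,\rho_{ni}| \le \Omega/n$. Hence $|\cos(\rho_{ni}x)|,\, |\sin(\rho_{ni}x)| \le e^{\pi\Omega}$ uniformly for $n \ge 1$, $i = 0,1$, $x \in [0,\pi]$.

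The bounds \eqref{estvvt} then reduce to routine calculation: the pointwise estimates follow from the previous display together with $|\rho_{ni}| \le Cn$, while the difference estimates use the integral representation $f(\rho_{n0}) - f(\rho_{n1}) = \hat\rho_n \int_0^1 f'(\rho_{n1} + t\hat\rho_n)\, dt$ applied to $f(\rho) = \cos(\rho x)$ and $f(\rho) = -\rho\sin(\rho x)$, combined with $|\hat\rho_n| \le \xi_n$. For \eqref{estpsit}, when $\rho_{n0} \ne \rho_{n1}$ the component $\tilde\psi_{n0}(x) = (\tilde\vv_{n0}(x)-\tilde\vv_{n1}(x))/\hat\rho_n$ equals the same integral $-x\int_0^1 \sin((\rho_{n1}+t\hat\rho_n)x)\,dt$, which is bounded by $\pi e^{\pi\Omega}$; in the degenerate case $\rho_{n0}=\rho_{n1}$ one has $\tilde\psi_{n0}(x) = \dot{\tilde\vv}_{n1}(x) = -x\sin((n-1)x)$ with the same bound. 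Differentiation in $x$ only introduces an extra factor $\rho_{n1}+t\hat\rho_n$, producing the $Cn$ on the right-hand side.

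The central step is \eqref{estRt}. The elementary identity $\tilde D(x,\rho,\theta) = \tfrac{1}{2}\bigl(\tfrac{\sin((\rho-\theta)x)}{\rho-\theta} + \tfrac{\sin((\rho+\theta)x)}{\rho+\theta}\bigr)$ (with the obvious limit when $\rho = \pm\theta$), combined with the preceding imaginary-part bound, yields $|\tilde D(x,\rho_{ni},\rho_{kj})| \le C(\Omega)$ and, after a similar analysis of $\partial_\rho \tilde D(x,\rho,\theta) = -\int_0^x t\sin(\rho t)\cos(\theta t)\,dt$, the oscillatory estimate $|\partial_\rho \tilde D(x,\rho_{n1}+t\hat\rho_n,\rho_{kj})| \le C(\Omega)/(|n-k|+1)$, uniformly in $t \in [0,1]$. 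Writing out the matrix product \eqref{defR1} explicitly shows that in every entry of $T_n^{-1} M T_k$ the $T_k$-factor contributes either $\hat\rho_k$ (from the first column of $T_k$) or the combination $\al_{k0}\tilde D(x,\rho_{ni},\rho_{k0}) - \al_{k1}\tilde D(x,\rho_{ni},\rho_{k1})$ (from the second column), while the $T_n^{-1}$-factor forces division of the row-difference by $\hat\rho_n$. The $\hat\rho_n^{-1}$ is absorbed by the row-integral representation $\tilde D(x,\rho_{n0},\rho_{kj}) - \tilde D(x,\rho_{n1},\rho_{kj}) = \hat\rho_n \int_0^1 \partial_\rho \tilde D(x,\rho_{n1}+t\hat\rho_n,\rho_{kj})\,dt$; the analogous representation in the second argument, together with $|\al_{k0}-\al_{k1}| \le \xi_k$, gives the factor $\xi_k/(|n-k|+1)$ in the second column as well. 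Thus every $\tilde R_{ni,kj}(x)$ has the claimed form. The $x$-derivative estimates are similar but simpler, since $\partial_x \tilde D(x,\rho,\theta) = \cos(\rho x)\cos(\theta x)$ is merely uniformly bounded, so differentiation removes the $1/(|n-k|+1)$ gain. The final norm claims reduce to $\|\tilde\psi(x)\|_m \le C(\Omega)$ (immediate) and $\|\tilde R(x)\|_{m\to m} \le \sup_n C(\Omega) \sum_k \xi_k/(|n-k|+1)$, where Cauchy--Schwarz applied to $\xi_k = (k\xi_k)/k$ together with $\{k\xi_k\} \in l_2$ of norm $\le \Omega$ gives a bound uniform in $n$. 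The main technical obstacle is precisely the bookkeeping in this last paragraph: one must simultaneously extract the $\xi_k$ factor from both the $\hat\rho_k$-column of $T_k$ and from the $\al$-differences in the other column, absorb the $\hat\rho_n^{-1}$ via row-integral representations, and preserve the oscillatory decay $1/(|n-k|+1)$ coming from $\partial_\rho\tilde D$, while making every constant depend only on $\Omega$.
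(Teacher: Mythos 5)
Your argument is correct and follows essentially the same route as the paper: extract the factor $\xi_k$ from the $\hat\rho_k$-column of $T_k$ and from $\al_{k0}-\al_{k1}$, absorb $\hat\rho_n^{-1}$ through difference quotients of entire functions, keep the $1/(|n-k|+1)$ decay of $\tilde D$, and finish with Cauchy--Schwarz; the only cosmetic difference is that you bound the difference quotients by integrating $\partial_\rho$ along the segment from $\tilde\rho_n$ to $\rho_n$ and use the explicit identity $\tilde D(x,\rho,\theta)=\tfrac12\bigl(\tfrac{\sin((\rho-\theta)x)}{\rho-\theta}+\tfrac{\sin((\rho+\theta)x)}{\rho+\theta}\bigr)$, whereas the paper writes the same quotients as Cauchy integrals over circles $|w-\tilde\rho_n|=2\Omega$ and quotes the standard bounds on $\tilde D$ from \cite{FY01}. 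One small imprecision: you should record the decay $|\tilde D(x,\rho_{ni},\rho_{kj})|\le C(\Omega)/(|n-k|+1)$ for $\tilde D$ itself (your identity gives it immediately), not only for $\partial_\rho\tilde D$, because the rows with $i=1$ involve no division by $\hat\rho_n$ and their $1/(|n-k|+1)$ factor must come directly from $\tilde D$.
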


\begin{proof}
The estimates \eqref{estvvt} are easily obtained by using the relations
$$
\tilde \vv_{ni}(x) = \cos \rho_{ni} x, \quad \tilde \vv'_{ni}(x) = -\rho_{ni} \sin \rho_{ni} x, \quad |\rho_{n0} - \rho_{n1}| \le \xi_n.
$$

Next, due to \eqref{defpsi}, we have $\tilde \psi_{ni}(x) = \tilde g_n(x, \rho_n)$, where the function
\begin{equation} \label{defgn}
\tilde g_n(x, \rho) := \frac{\cos \rho x - \cos \tilde \rho_n x}{\rho - \tilde \rho_n}
\end{equation}
is entire in $\rho$. Note that
\begin{equation} \label{estgn}
|\tilde g_n(x, \rho)| \le C(\Omega), \quad |\tilde g_n'(x, \rho)| \le C(\Omega) n \quad \text{for}\:\: |\rho - \tilde \rho_n| \le \Omega, \quad x \in [0,\pi],
\end{equation}
where $C(\Omega)$ does not depend on $n$. In view of \eqref{defBO} and $S \in \mathring{B}_{\Omega}$, we have $|\rho_n - \tilde \rho_n| \le \Omega$ for all $n \ge 1$, so \eqref{estgn} readily implies \eqref{estpsit}.

Proceed to obtaining \eqref{estRt}. Due to \eqref{not}, \eqref{defR1}, and \eqref{defR2}, we have
\begin{equation} \label{defRt00}
\tilde R_{n0,k0}(x) = \al_k \tilde G_n(x, \rho_n, \rho_k) \hat \rho_k,
\end{equation}
where
\begin{equation} \label{defGn}
\tilde G_n(x, \rho, \theta) := \frac{\tilde D(x, \rho, \theta) - \tilde D(x, \tilde \rho_n, \theta)}{\rho - \tilde \rho_n}
\end{equation}
is an entire function in $\rho$ and $\theta$. The complex Taylor formula
$$
\tilde D(x, \rho, \theta) = \tilde D(x, \tilde \rho_n, \theta) + \frac{(\rho - \tilde \rho_n)}{2 \pi i} \oint_{\gamma_n} \frac{\tilde D(x, w, \theta) \, dw}{(w - \tilde \rho_n)(w - \rho)}
$$
implies
$$
\tilde G_n(x, \rho, \theta) = \frac{1}{2 \pi i} \oint_{\gamma_n} \frac{\tilde D(x, w, \theta) \, dw}{(w - \tilde \rho_n)(w - \rho)},
$$
where $\ga_n := \{ w \in \mathbb C \colon |w - \tilde \rho_n| = 2\Omega \}$ and $|\rho - \tilde \rho_n| \le \Omega$. For the function
$\tilde D(x, w, \theta) = \int_0^x \cos wt \cos \theta t \, dt$,
we use the following standard estimates (see \cite[Section 1.6.1]{FY01}):
$$
|\tilde D(x, w, \theta)| \le \frac{C}{|n - k| + 1}, 
\quad |\tilde D'(x, w, \theta)| \le C,
$$
where $C = C(\Omega)$ for $|w - \tilde \rho_n| \le 2\Omega$ and $|\theta - \tilde \rho_k| \le \Omega$, $n,k \ge 1$.
Consequently,
\begin{gather} \label{estGn}
|\tilde G_n(x, \rho, \theta)| \le \frac{C(\Omega)}{|n - k| + 1}, \quad
|\tilde G_n'(x, \rho, \theta)| \le C(\Omega), \\ \nonumber \text{for} \:\:
|\rho - \tilde \rho_n| \le \Omega, \, |\theta - \tilde \rho_k| \le \Omega, \quad n,k\ge 1.
\end{gather}
This together with \eqref{defRt00} and the estimates $|\al_n| \le C(\Omega)$, $|\hat \rho_k| \le \xi_k$ directly imply \eqref{estRt} for $i = j = 0$. The other cases can be studied analogously. 

According to \eqref{estopR} and \eqref{estRt}, we deduce 
\begin{multline*}
\| \tilde R(x) \|_{m \to m} \le C(\Omega) \sup_{n \ge 1} \sum_{k = 1}^{\infty} \frac{\xi_k}{|n - k| + 1} \\ \le C(\Omega) \sqrt{\sum_{k = 1}^{\infty} \xi_k^2} \cdot \sup_{n \ge 1} \sqrt{\sum_{k = 1}^{\infty} \frac{1}{(|n-k|+1)^2}} \le C(\Omega) \| \{ \xi_k \}\|_{l_2}.
\end{multline*}
In view of \eqref{defBO}, we have $\| \{ \xi_k \}\|_{l_2} \le \| \{ \xi_k \}\|_{l_2^1} \le \Omega$, which concludes the proof.
\end{proof}

Next, suppose that $S \in \mathring{B}_{\Omega, K}$. Then, the operator $(I + \tilde R(x))^{-1}$ exists and is bounded by the constant $K$ in the operator norm for each fixed $x \in [0,\pi]$.
Define the operator $R(x)$ from \eqref{invR}:
$$
R(x) := I - (I + \tilde R(x))^{-1}.
$$
Clearly, $\| R(x) \|_{m \to m} \le K + 1$.
Since $R(x)$ is linear, it acts by the rule \eqref{operR} with some components $R_{ni,kj}(x)$. Following the proof strategy of Lemma~1.6.7 in \cite{FY01}, we derive the estimates
\begin{equation} \label{estR}
|R_{ni,kj}(x)| \le C \xi_k \left( \tfrac{1}{|n-k|+1} + \eta_n\right), \quad
|R_{ni,kj}(x)| \le C \xi_k \left( \tfrac{1}{|n-k|+1} + \eta_k\right),
\quad
|R_{ni,kj}'(x)| \le C \xi_k, 
\end{equation}
where $n,k \ge 1$, $i,j = 0, 1$, $x \in [0,\pi]$, and
$$
C = C(\Omega, K), \quad
\eta_n := \sqrt{\sum_{s = 1}^{\infty} \frac{1}{s^2 (|s - n| + 1)^2}}, \quad \{ \eta_n \}_{n \ge 1} \in l_2.
$$

Next, we construct the solution of the main equation \eqref{main} by the formula
$$
\psi(x) := (I - R(x)) \tilde \psi(x), 
$$
which takes the following element-wise form:
\begin{equation} \label{findpsi}
\psi_{ni}(x) = \tilde \psi_{ni}(x) - \sum_{k,j} R_{ni,kj}(x) \tilde \psi_{kj}(x), \quad n \ge 1, \, i = 0, 1.
\end{equation}

Using \eqref{findpsi} together with the estimates \eqref{estpsit} and \eqref{estR}, we obtain
\begin{equation} \label{estpsi}
|\psi_{ni}(x)| \le C, \quad |\psi_{ni}(x) - \tilde \psi_{ni}(x)| \le C \eta_n, \quad
|\psi'_{ni}(x)| \le C n, \quad |\psi_{ni}'(x) - \tilde \psi_{ni}'(x)| \le C, 
\end{equation}
where $C = C(\Omega, K)$. Using $\psi_{ni}(x)$, we find $\vv_{ni}(x)$ by \eqref{vvpsi}. Then, we immediately get
\begin{equation} \label{estvv}
|\vv_{ni}(x)| \le C, \quad |\vv_{ni}(x) - \tilde \vv_{ni}(x)| \le C \eta_n, \quad
|\vv'_{ni}(x)| \le C n, \quad |\vv_{ni}'(x) - \tilde \vv_{ni}'(x)| \le C,
\end{equation}
where $C = C(\Omega, K)$.

Using \eqref{estvvt} and \eqref{estvv}, we obtain the uniform estimates for the series \eqref{defeps}:
$$
\| \eps \|_{W_2^1} \le C(\Omega, K).
$$
By virtue of \eqref{recqhH}, we conclude that
$$
\| q \|_{L_2} \le C(\Omega, K), \quad |h|, |H| \le C(\Omega, K).
$$

Introduce the set
$$
\mathring{P}_Q := \{ (q, h, H) \in P_Q \colon \om = 0 \},
$$
where $P_Q$ was defined in \eqref{defPQ} and $\om$, in \eqref{defom}.
The above arguments yield the following result.

\begin{thm} \label{thm:ubound}
For each $S \in \mathring{B}_{\Omega, K}$, there exists a unique triple $(q, h, H) \in \mathring{P}_Q$ such that $S$ are the spectral data of the problem $L(q, h, H)$, where $Q = C(\Omega, K)$.
\end{thm}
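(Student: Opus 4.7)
The plan is to assemble the pieces already developed in this section into a single chain of uniform estimates. Uniqueness of $(q,h,H)$ is immediate from Theorem~\ref{thm:nsc}, since the invertibility of $(I + \tilde R(x))$ built into the definition of $\mathring B_{\Omega,K}$ is precisely the necessary and sufficient condition for Inverse Problem~\ref{ip:simp} to be uniquely solvable. The membership in $\mathring P_Q$ rather than just $P_Q$ is automatic: by \eqref{defB} the condition $S \in \mathring B_{\Omega}$ forces $\omega = 0$, and by \eqref{defom} this is the same $\omega$ as for the reconstructed $(q,h,H)$. Hence the whole content of the theorem reduces to producing the uniform bound $\|q\|_{L_2} + |h| + |H| \le C(\Omega, K)$.

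I would obtain this bound by following the reconstruction pipeline $\tilde\psi, \tilde R \to R \to \psi \to \varphi \to \varepsilon \to (q,h,H)$ and controlling each step. First, Lemma~\ref{lem:estt} already gives uniform bounds on $\tilde\psi_{ni}(x)$ and on the entries and operator norm of $\tilde R(x)$ in terms of $\Omega$ alone. Second, since $\|(I + \tilde R(x))^{-1}\|_{m\to m} \le K$ uniformly in $x$, the operator $R(x) := I - (I + \tilde R(x))^{-1}$ has operator norm at most $K+1$; from this operator-norm control one upgrades to the entry-wise estimates \eqref{estR} by applying the Fredholm identity $R - \tilde R + \tilde R R = 0$ (which comes from \eqref{mainR}) together with the decay $|\tilde R_{ni,kj}(x)| \le C\xi_k/(|n-k|+1)$ already proved, exactly in the spirit of Lemma~1.6.7 in \cite{FY01}.

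Third, the formula \eqref{findpsi} combined with the bounds \eqref{estpsit} and \eqref{estR} produces the four estimates \eqref{estpsi} for $\psi_{ni}(x)$ and its derivative; the linear change of variables \eqref{vvpsi} then transports these to the bounds \eqref{estvv} for $\varphi_{ni}(x)$. Fourth, inserting the estimates \eqref{estvvt} and \eqref{estvv} into the series \eqref{defeps} for $\varepsilon(x)$ and exploiting $\|\{\xi_n\}\|_{l_2^1} \le \Omega$ from \eqref{defBO}, together with Cauchy--Schwarz in the form $\sum \xi_k \eta_k \le \|\{\xi_k\}\|_{l_2}\|\{\eta_k\}\|_{l_2}$, yields $\|\varepsilon\|_{W_2^1} \le C(\Omega, K)$. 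The reconstruction formulas \eqref{recqhH} then deliver the desired bound, with $Q := C(\Omega, K)$.

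The main technical obstacle is the second step: a bare operator-norm bound on $R(x)$ does not suffice to sum the series \eqref{defeps} in $W_2^1$, so one really needs the off-diagonal decay encoded in \eqref{estR}. The auxiliary sequence $\{\eta_n\} \in l_2$ there is what guarantees that the ``correction'' $\psi_{ni}(x) - \tilde\psi_{ni}(x)$ is $l_2$-small in $n$, which in turn is what makes the $W_2^1$-convergence of $\varepsilon$ quantitative rather than merely qualitative. Beyond this, every other step is a direct application of the estimates already laid out in Sections~\ref{sec:maineq}--\ref{sec:ubound}, with careful bookkeeping to ensure that every constant depends only on $\Omega$ and $K$.
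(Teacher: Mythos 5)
Your proposal is correct and follows essentially the same route as the paper: Section~\ref{sec:ubound} proceeds through exactly the chain $\tilde\psi,\tilde R \to R \to \psi \to \vv \to \eps \to (q,h,H)$, using Lemma~\ref{lem:estt}, the upgrade from the operator-norm bound $\|R(x)\|_{m\to m}\le K+1$ to the entry-wise decay \eqref{estR} in the spirit of Lemma~1.6.7 of \cite{FY01}, and then \eqref{findpsi}, \eqref{vvpsi}, \eqref{defeps}, and \eqref{recqhH}. You also correctly single out the key technical point, namely that the off-diagonal decay in \eqref{estR} (and the $l_2$-sequence $\{\eta_n\}$) is what makes the $W_2^1$-bound on $\eps$ quantitative.
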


Thus, Theorem~\ref{thm:ubound} asserts that the solution of the inverse problem is uniformly bounded on $\mathring{B}_{\Omega, K}$.

\section{Uniform stability} \label{sec:ustab}

The goal of this section is to prove Theorem~\ref{thm:uni} on the uniform stability of the inverse spectral problem.

Consider two sequences $S^{(1)} = \{ \rho_n^{(1)}, \alpha_n^{(1)} \}_{n \ge 1}$ and $S^{(2)} = \{ \rho_n^{(2)}, \alpha_n^{(2)} \}_{n \ge 1}$ of $\mathring{B}_{\Omega}$. Denote $Z := d(S^{(1)}, S^{(2)})$. In other words,
\begin{equation} \label{defZ}
\zeta_n := \sqrt{|\rho_n^{(1)} - \rho_n^{(2)}|^2 + |\alpha_n^{(1)} - \alpha_n^{(2)}|^2}, \quad
Z := \sqrt{\sum_{n = 1}^{\infty} (n\zeta_n)^2}.
\end{equation}
Obviously, for $S^{(1)}, S^{(2)} \in \mathring{B}_{\Omega}$, we have $\{ \zeta_n \} \in l_2^1$ and so $Z < \infty$.

\begin{lem} \label{lem:estdift}
For $S^{(1)}, S^{(2)} \in \mathring{B}_{\Omega}$, the following estimates hold:
\begin{gather} \label{estdifvvt}
|\tilde \vv_{n0}^{(1)}(x) - \tilde \vv_{n0}^{(1)}(x)| \le C\zeta_n, \quad
\left| \tfrac{d}{dx} (\tilde \vv_{n0}^{(1)} - \tilde \vv_{n0}^{(2)})(x)\right| \le C n \zeta_n, \quad \tilde \vv_{n1}^{(1)}(x) \equiv \tilde \vv_{n1}^{(2)}(x), \\ \label{estdifpsit}
|\tilde \psi_{n0}^{(1)}(x) - \tilde \psi_{n0}^{(2)}(x)| \le C \zeta_n, \quad
\left| \tfrac{d}{dx} (\tilde \psi_{n0}^{(1)} - \tilde \psi_{n0}^{(2)})(x) \right| \le C n \zeta_n, \quad \tilde \psi_{n1}^{(1)}(x) \equiv \tilde \psi_{n1}^{(2)}(x), \\ \label{estdifRte}
|\tilde R_{ni,kj}^{(1)}(x) - \tilde R_{ni,kj}^{(2)}(x)| \le \frac{C (\zeta_k + \zeta_n \xi_k^{(1)})}{|n-k|+1}, \quad 
\left| \tfrac{d}{dx} (\tilde R_{ni,kj}^{(1)} - \tilde R_{ni,kj}^{(2)})(x)\right| \le C (\zeta_k + Z \xi_k^{(1)}),
\end{gather}
where $C = C(\Omega)$, $n,k \ge 1$, $i,j = 0, 1$, $x \in [0,\pi]$. Consequently,
\begin{equation} \label{estdifRt}
\| \tilde R^{(1)}(x) - \tilde R^{(2)}(x) \|_{m \to m} \le C(\Omega) \| \{ \zeta_n \} \|_{l_2} \le C(\Omega) Z.
\end{equation}
\end{lem}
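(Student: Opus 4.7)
The plan is to handle the three estimates \eqref{estdifvvt}, \eqref{estdifpsit}, \eqref{estdifRte} in order, exploiting the fact that for the model functions all dependence on the sequences enters through $\rho_n$ and $\alpha_n$ only in $\tilde\vv_{n0}=\cos\rho_n x$ (and hence $\tilde\psi_{n0}$), while $\tilde\vv_{n1}=\cos\tilde\rho_n x$ is model-intrinsic. The identities $\tilde\vv_{n1}^{(1)}\equiv\tilde\vv_{n1}^{(2)}$ and $\tilde\psi_{n1}^{(1)}\equiv\tilde\psi_{n1}^{(2)}$ in \eqref{estdifvvt}, \eqref{estdifpsit} are then immediate.

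For the first pair of bounds in \eqref{estdifvvt}, I would write $\cos\rho_n^{(1)}x-\cos\rho_n^{(2)}x$ as a contour integral in $\rho$ on $|\rho-\tilde\rho_n|=2\Omega$, using that both $\rho_n^{(s)}$ lie in $|\rho-\tilde\rho_n|\le\Omega$ and that $|\cos\rho x|\le e^{\Omega\pi}$ on that contour uniformly in $n$ and $x\in[0,\pi]$. This at once gives $|\tilde\vv_{n0}^{(1)}(x)-\tilde\vv_{n0}^{(2)}(x)|\le C(\Omega)\zeta_n$, while differentiating the integrand in $x$ adds a factor $n$ (since $|\rho|\le n+\Omega$ on the contour). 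For \eqref{estdifpsit}, I would use the representation $\tilde\psi_{n0}(x)=\tilde g_n(x,\rho_n)$ with $\tilde g_n$ entire in $\rho$ and the uniform bounds \eqref{estgn} on $|\rho-\tilde\rho_n|\le 2\Omega$, again extracting a Cauchy integral factor $(\rho_n^{(1)}-\rho_n^{(2)})/[(w-\rho_n^{(1)})(w-\rho_n^{(2)})]$ to produce $\zeta_n$ (and $n\zeta_n$ after $x$-differentiation).

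The main work is \eqref{estdifRte}. I would start from the product formula
\[
\tilde R^{(s)}_{n0,k0}(x)=\alpha_k^{(s)}\,\tilde G_n(x,\rho_n^{(s)},\rho_k^{(s)})\,\hat\rho_k^{(s)},
\]
(with the analogous four-entry matrix versions for the other $i,j$), and expand the difference as a telescoping sum
\[
\bigl(\alpha_k^{(1)}-\alpha_k^{(2)}\bigr)\tilde G_n^{(1)}\hat\rho_k^{(1)}
+\alpha_k^{(2)}\bigl(\tilde G_n^{(1)}-\tilde G_n^{(2)}\bigr)\hat\rho_k^{(1)}
+\alpha_k^{(2)}\tilde G_n^{(2)}\bigl(\hat\rho_k^{(1)}-\hat\rho_k^{(2)}\bigr).
\]
Each factor is controlled uniformly: $|\alpha_k^{(s)}|\le C(\Omega)$, $|\hat\rho_k^{(1)}|\le\xi_k^{(1)}$, and $|\tilde G_n|\le C(\Omega)/(|n-k|+1)$ by \eqref{estGn}. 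For the middle term, I split $\tilde G_n^{(1)}-\tilde G_n^{(2)}$ into a change in $\rho_n$ and a change in $\rho_k$, each estimated via a Cauchy contour integral in the relevant variable on a circle of radius $2\Omega$ around $\tilde\rho_n$ or $\tilde\rho_k$, which yields a factor $\zeta_n$ (or $\zeta_k$) times the same $1/(|n-k|+1)$ bound. Collecting, the three terms contribute respectively $\zeta_k\xi_k^{(1)}$, $(\zeta_n+\zeta_k)\xi_k^{(1)}$, $\zeta_k$ against the denominator $|n-k|+1$; using $\xi_k^{(1)}\le C(\Omega)$ to absorb $\zeta_k\xi_k^{(1)}$ into $\zeta_k$ produces exactly the asymmetric bound $C(\zeta_k+\zeta_n\xi_k^{(1)})/(|n-k|+1)$. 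The $x$-derivative estimate follows the same pattern after using $|\tilde G_n'|\le C(\Omega)$ from \eqref{estGn}.

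Finally, for \eqref{estdifRt}, I would apply the first bound of \eqref{estdifRte} and evaluate
\[
\sup_{n,i}\sum_{k,j}\frac{\zeta_k+\zeta_n\xi_k^{(1)}}{|n-k|+1}
\le C\|\{\zeta_k\}\|_{l_2}+C\sup_n\zeta_n\cdot\|\{\xi_k^{(1)}\}\|_{l_2}
\le C(\Omega)\|\{\zeta_k\}\|_{l_2},
\]
by Cauchy–Schwarz against $\{1/(|n-k|+1)\}\in l_2$, together with $\|\{\xi_k^{(1)}\}\|_{l_2}\le\Omega$ and $\sup_n\zeta_n\le\|\{\zeta_k\}\|_{l_2}$. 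Then $\|\{\zeta_k\}\|_{l_2}\le\|\{\zeta_k\}\|_{l_2^1}=Z$ closes the proof. The main delicate point is the asymmetry in \eqref{estdifRte}: packaging the $\hat\rho_k^{(1)}$ factor with $\zeta_n$ (and not with $\zeta_k$) is what makes the subsequent operator-norm bound converge without a spurious factor from $\sup_n\zeta_n$, and dropping it would only yield a strictly weaker estimate.
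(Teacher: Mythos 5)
Your proposal is correct and follows essentially the same route as the paper: the product representation $\tilde R^{(s)}_{n0,k0}=\al_k^{(s)}\tilde G_n(x,\rho_n^{(s)},\rho_k^{(s)})\hat\rho_k^{(s)}$, a telescoping decomposition (your three terms, with the middle one split in $\rho_n$ and $\rho_k$, coincide with the paper's $T_1,\dots,T_4$), Schwarz-lemma/Cauchy-integral extraction of the difference factors from \eqref{estgn} and \eqref{estGn}, and the Cauchy--Schwarz summation against $\{1/(|n-k|+1)\}$ for \eqref{estdifRt}. Your closing observation about why the $\xi_k^{(1)}$ factor must be kept attached to $\zeta_n$ (summability in $k$) correctly identifies the one delicate point.
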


\begin{proof}
The estimates \eqref{estdifvvt} for $\tilde \vv_{ni}^{(s)}(x) = \cos \rho_{ni}^{(s)} x$ are easily deduced by using the relations 
$$
\rho_n^{(s)} = n - 1 + \hat \rho_n^{(s)}, \quad |\hat \rho_n^{(s)}| \le \xi_n, \quad |\hat \rho_n^{(1)} - \hat \rho_n^{(2)}| \le \zeta_n, \quad n \ge 1, \, s = 1, 2.
$$

According to \eqref{defpsi} and \eqref{sd0}, we have
$\tilde \psi_{n0}^{(s)}(x) = \tilde g_n(x, \rho_n^{(s)})$, where the entire function $\tilde g_n(x, \rho)$ was defined in \eqref{defgn}. Using the uniform estimate \eqref{estgn} and Schwarz's lemma, we obtain
$$
|\tilde g_n(x, \rho_n^{(1)}) - \tilde g_n(x, \rho_n^{(2)})| \le C(\Omega) |\rho_n^{(1)} - \rho_n^{(2)}|, \quad n \ge 1,
$$
which implies the estimate \eqref{estdifpsit} for $|\tilde \psi_{n0}^{(1)}(x) - \tilde \psi_{n0}^{(2)}(x)|$.

Due to \eqref{defRt00}, we have
$$
\tilde R^{(1)}_{n0,k0}(x) - \tilde R^{(2)}_{n0,k0}(x) = \al_k^{(1)} \tilde G_n(x, \rho_n^{(1)}, \rho_k^{(1)}) \hat \rho_k^{(1)} - \al_k^{(2)} \tilde G(x, \rho_n^{(2)}, \rho_k^{(2)}) \hat \rho_k^{(2)},
$$
where the entire function $\tilde G_n(x, \rho, \theta)$ was defined in \eqref{defGn}.
Grouping of the terms implies
\begin{align*}
& \tilde R^{(1)}_{n0,k0}(x) - \tilde R^{(2)}_{n0,k0}(x) = T_1 + T_2 + T_3 + T_4, \\
& T_1 := \bigl( \al_k^{(1)} - \al_k^{(2)} \bigr) \tilde G_n(x, \rho_n^{(1)}, \rho_k^{(1)}) \hat \rho_k^{(1)}, \\
& T_2 := \al_k^{(2)} \bigl( \tilde G_n(x, \rho_n^{(1)}, \rho_k^{(1)}) - \tilde G_n(x, \rho_n^{(2)}, \rho_k^{(1)})\bigr) \hat \rho_k^{(1)}, \\
& T_3 := \al_k^{(2)} \bigl( \tilde G_n(x, \rho_n^{(2)}, \rho_k^{(1)}) - \tilde G_n(x, \rho_n^{(2)}, \rho_k^{(2)}) \bigr) \hat \rho_k^{(1)}, \\
& T_4 := \al_k^{(2)} \tilde G_n(x, \rho_n^{(2)}, \rho_k^{(2)}) \bigl(\rho_k^{(1)} - \rho_k^{(2)}\bigr).
\end{align*}

The uniform estimate \eqref{estGn} for $\tilde G_n(x, \rho, \theta)$ together with Schwarz's Lemma and \eqref{defZ} imply
\begin{align*}
\bigl|\tilde G_n(x, \rho_n^{(1)}, \rho_k^{(1)}) - \tilde G_n(x, \rho_n^{(2)}, \rho_k^{(1)})\bigr| & \le \frac{C |\rho_n^{(1)} - \rho_n^{(2)}|}{|n-k| + 1} \le \frac{C \zeta_n}{|n - k| + 1}, \\
\bigl|\tilde G_n(x, \rho_n^{(2)}, \rho_k^{(1)}) - \tilde G_n(x, \rho_n^{(2)}, \rho_k^{(2)})\bigr| & \le \frac{C |\rho_k^{(1)} - \rho_k^{(2)}|}{|n-k| + 1} \le \frac{C \zeta_k}{|n - k| + 1},
\end{align*}
where $C = C(\Omega)$ does not depend on $n,k \ge 1$ and $x \in [0,\pi]$. Using the latter estimates together with \eqref{estGn} and the following ones:
$$
\bigl|\al_k^{(1)} - \al_k^{(2)}\bigr| \le \zeta_k, \quad \bigl|\al_k^{(2)}\bigr| \le C(\Omega), \quad \bigl|\hat \rho_k^{(1)}\bigr| \le \xi_k^{(1)} \le \Omega, \quad 
\bigl| \rho_k^{(1)} - \rho_k^{(2)}\bigr| \le \zeta_k,
$$
we obtain
$$
|T_1|, \, |T_3|, \, |T_4| \le \frac{C \zeta_k}{|n - k| + 1}, \quad
|T_2| \le \frac{C \zeta_n \xi_k^{(1)}}{|n - k| + 1}, \quad C = C(\Omega).
$$
This yields \eqref{estdifRte} for the difference $\bigl( \tilde R_{ni,kj}^{(1)} - \tilde R_{ni,kj}^{(2)} \bigr)$ with $i = j = 0$. The other cases, as well as the derivatives $\tfrac{d}{dx}\tilde \psi_{ni}^{(s)}(x)$ and $\tfrac{d}{dx}\tilde R_{ni,kj}^{(s)}(x)$, can be considered similarly.

Finally,
\begin{align*}
\bigl\| \tilde R^{(1)}(x) - \tilde R^{(2)}(x) \bigr\|_{m \to m} & = \sup_{n,i} \sum_{k,j} \bigl|\tilde R_{ni,kj}^{(1)}(x) - \tilde R_{ni,kj}^{(2)}(x)\bigr| \le C(\Omega) \sup_{n \ge 1} \sum_{k = 1}^{\infty} \frac{\zeta_k + \zeta_n \xi_k^{(1)}}{|n -k| + 1} \\
& \le C(\Omega) \Bigl( \| \{ \zeta_k\} \|_{l_2} + \sup_{n \ge 1} |\zeta_n| \cdot \| \{ \xi_k^{(1)}\} \|_{l_2} \Bigr).
\end{align*}
Since $\sup_{n \ge 1} |\zeta_n| \le \| \{ \zeta_n\} \|_{l_2}$ and $\| \{ \xi_k^{(1)}\} \|_{l_2} \le \Omega$, this yields \eqref{estdifRt} and so concludes the proof.
\end{proof}

Let us additionally assume that $S^{(1)}$ and $S^{(2)}$ belong to $\mathring{B}_{\Omega, K}$ with some $K > 0$. Then, the operators $(I + \tilde R^{(s)}(x))$ are invertible in $m$ for $s = 1, 2$ and $x \in [0,\pi]$. Therefore, the operators $R^{(s)}(x)$, as well as the functions $\psi_{ni}^{(s)}(x)$ and $\vv_{ni}^{(s)}(x)$ can be constructed.

\begin{lem} \label{lem:estdif}
For $S^{(1)}, \, S^{(2)} \in \mathring{B}_{\Omega, K}$, the following estimates hold:
\begin{gather} \label{estdifR}
\bigl| R_{ni,kj}^{(1)}(x) - R_{ni,kj}^{(2)}(x)\bigr| \le C \bigl( \zeta_k + Z \xi_k^{(1)}\bigr) \left( \frac{1}{|n-k|+1} + \eta_n\right), \\ \label{estdifR2}
\bigl| R_{ni,kj}^{(1)}(x) - R_{ni,kj}^{(2)}(x)\bigr| \le C \bigl( \zeta_k + Z \xi_k^{(1)}\bigr) \left( \frac{1}{|n-k|+1} + \eta_k\right), \\ 
\label{estdifRp}
\bigl| \tfrac{d}{dx}(R_{ni,kj}^{(1)} - R_{ni,kj}^{(2)})(x)\bigr| \le C \bigl( \zeta_k + Z \xi_k^{(1)}\bigr), \\ \label{estdifpsi}
\bigl| \psi_{ni}^{(1)}(x) - \psi_{ni}^{(2)}(x)\bigr| \le C (\zeta_n + Z \eta_n), \quad
\bigl| \tfrac{d}{dx} (\psi_{ni}^{(1)} - \psi_{ni}^{(2)})(x)\bigr| \le C (n \zeta_n + Z), \\ \label{estdifvv}
\bigl| \vv_{ni}^{(1)}(x) - \vv_{ni}^{(2)}(x)\bigr| \le C (\zeta_n + Z \eta_n), \quad
\bigl| \tfrac{d}{dx} (\vv_{ni}^{(1)} - \vv_{ni}^{(2)})(x)\bigr| \le C (n \zeta_n + Z), \\ \label{estvvpsi}
\bigl| \vv_{n0}^{(1)}(x) - \vv_{n1}^{(1)}(x) - \vv_{n0}^{(2)}(x) + \vv_{n1}^{(2)}(x) + (\rho_n^{(2)} - \rho_n^{(1)})\psi_{n0}^{(2)}(x)\bigr| \le C (\zeta_n + Z \eta_n) \xi_n^{(1)}, \\ \label{estvvpsip}
\bigl| \tfrac{d}{dx} \bigl(\vv_{n0}^{(1)} - \vv_{n1}^{(1)} - \vv_{n0}^{(2)} + \vv_{n1}^{(2)} + (\rho_n^{(2)} - \rho_n^{(1)})\psi_{n0}^{(2)}\bigr)(x)\bigr| \le C (n \zeta_n + Z) \xi_n^{(1)},
\end{gather}
where $C = C(\Omega, K)$, $n,k \ge 1$, $i,j = 0,1$, $x \in [0,\pi]$.
\end{lem}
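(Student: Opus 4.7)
The plan is to derive the estimates in the listed order, each using the previous together with Lemma~\ref{lem:estdift} and the bounds from Section~\ref{sec:ubound}. The starting point is the operator identity
\begin{equation*}
R^{(1)}(x) - R^{(2)}(x) = \bigl(I - R^{(1)}(x)\bigr)\bigl(\tilde R^{(1)}(x) - \tilde R^{(2)}(x)\bigr)\bigl(I - R^{(2)}(x)\bigr),
\end{equation*}
which follows by subtracting the two versions of $R^{(s)} = \tilde R^{(s)} - \tilde R^{(s)} R^{(s)}$ (a consequence of \eqref{invR} together with \eqref{mainR}) and applying \eqref{invR} to invert $(I + \tilde R^{(1)})$. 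Expanding componentwise, $R^{(1)}_{ni,kj}(x) - R^{(2)}_{ni,kj}(x)$ splits into $\tilde R^{(1)}_{ni,kj} - \tilde R^{(2)}_{ni,kj}$, two single convolutions against $R^{(s)}$, and one double convolution. Plugging \eqref{estdifRte} into the difference and \eqref{estR} into each $R^{(s)}$, then using the standard convolution-type bound $\sum_{l}\tfrac{1}{(|n-l|+1)(|l-k|+1)} \le C\bigl(\tfrac{1}{|n-k|+1} + \eta_n + \eta_k\bigr)$, one obtains both \eqref{estdifR} and \eqref{estdifR2} by alternating which factor in the triple convolution supplies the $\eta$-tail. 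The $Z \xi_k^{(1)}$ contribution originates from the $\zeta_n \xi_k^{(1)}$ portion of \eqref{estdifRte}, bounded via $\sup_{n \ge 1}|\zeta_n| \le \| \{ \zeta_n \} \|_{l_2} \le Z$. The derivative bound \eqref{estdifRp} follows in the same manner, now invoking the second estimates in \eqref{estRt} and \eqref{estdifRte}.

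With the bounds on $R^{(1)} - R^{(2)}$ in hand, the representation $\psi^{(s)}(x) = \bigl(I - R^{(s)}(x)\bigr)\tilde\psi^{(s)}(x)$ gives by subtraction
\begin{equation*}
\psi^{(1)}_{ni}(x) - \psi^{(2)}_{ni}(x) = \bigl(\tilde\psi^{(1)}_{ni} - \tilde\psi^{(2)}_{ni}\bigr)(x) - \sum_{k,j} R^{(1)}_{ni,kj}(x)\bigl(\tilde\psi^{(1)}_{kj} - \tilde\psi^{(2)}_{kj}\bigr)(x) - \sum_{k,j}\bigl(R^{(1)}_{ni,kj} - R^{(2)}_{ni,kj}\bigr)(x)\tilde\psi^{(2)}_{kj}(x).
\end{equation*}
The first two terms produce the $\zeta_n$-part of \eqref{estdifpsi}, since $\tilde\psi^{(1)}_{kj} - \tilde\psi^{(2)}_{kj}$ vanishes for $j = 1$ and is controlled by $C\zeta_k$ for $j=0$ (see \eqref{estdifpsit}); the third term, bounded via \eqref{estdifR}--\eqref{estdifR2}, \eqref{estpsit}, and $\{\xi_k^{(1)}\}, \{\zeta_k\}\in l_2$, contributes the $Z\eta_n$-part. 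The $x$-derivative estimate in \eqref{estdifpsi} is obtained by differentiating the same formula and invoking the derivative bounds from \eqref{estRt}, \eqref{estdifRp}, \eqref{estpsit}, and \eqref{estdifpsit}.

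The bound \eqref{estdifvv} follows immediately from \eqref{vvpsi}, namely $\vv^{(s)}_{n0} = \hat\rho_n^{(s)}\psi^{(s)}_{n0} + \psi^{(s)}_{n1}$ and $\vv^{(s)}_{n1} = \psi^{(s)}_{n1}$. Indeed, the decomposition $\vv^{(1)}_{n0} - \vv^{(2)}_{n0} = \hat\rho_n^{(1)}\bigl(\psi^{(1)}_{n0} - \psi^{(2)}_{n0}\bigr) + (\rho_n^{(1)} - \rho_n^{(2)})\psi^{(2)}_{n0} + \bigl(\psi^{(1)}_{n1} - \psi^{(2)}_{n1}\bigr)$ combined with $|\hat\rho_n^{(1)}| \le \xi_n^{(1)} \le \Omega$, $|\rho_n^{(1)} - \rho_n^{(2)}| \le \zeta_n$, \eqref{estpsi}, and \eqref{estdifpsi} yields the claim, and the derivative version is analogous. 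Finally, \eqref{estvvpsi}--\eqref{estvvpsip} come from the exact algebraic identity
\begin{equation*}
\vv^{(1)}_{n0}(x) - \vv^{(1)}_{n1}(x) - \vv^{(2)}_{n0}(x) + \vv^{(2)}_{n1}(x) + \bigl(\rho_n^{(2)} - \rho_n^{(1)}\bigr)\psi^{(2)}_{n0}(x) = \hat\rho_n^{(1)}\bigl(\psi^{(1)}_{n0}(x) - \psi^{(2)}_{n0}(x)\bigr),
\end{equation*}
a direct consequence of $\vv^{(s)}_{n0} - \vv^{(s)}_{n1} = \hat\rho_n^{(s)}\psi^{(s)}_{n0}$ and $\hat\rho_n^{(1)} - \hat\rho_n^{(2)} = \rho_n^{(1)} - \rho_n^{(2)}$; multiplying by $|\hat\rho_n^{(1)}| \le \xi_n^{(1)}$ and applying \eqref{estdifpsi} gives \eqref{estvvpsi}, while differentiating in $x$ and using the corresponding derivative estimates yields \eqref{estvvpsip}.

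The principal obstacle will be the asymmetric componentwise bookkeeping for $R^{(1)}_{ni,kj} - R^{(2)}_{ni,kj}$: producing simultaneously the estimates \eqref{estdifR} and \eqref{estdifR2} requires arranging the triple convolution so that either the first or the last factor supplies the summable tail in $n$ versus $k$, while the distinct contributions proportional to $\zeta_k$ and to $\zeta_n\xi_k^{(1)}$ in \eqref{estdifRte} must be propagated separately to recover precisely the form $\zeta_k + Z\xi_k^{(1)}$ in the final bound. All remaining steps are routine bookkeeping consequences of the linear relations in Section~\ref{sec:maineq} combined with the uniform bounds of Section~\ref{sec:ubound}.
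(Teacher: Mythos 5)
Your overall route coincides with the paper's: the resolvent identity for $R^{(1)}-R^{(2)}$, componentwise convolution estimates, then propagation through $\psi^{(s)}=(I-R^{(s)})\tilde\psi^{(s)}$ and \eqref{vvpsi}. Your algebraic identities are correct; in fact the exact identity $\vv^{(1)}_{n0}-\vv^{(1)}_{n1}-\vv^{(2)}_{n0}+\vv^{(2)}_{n1}+(\rho^{(2)}_n-\rho^{(1)}_n)\psi^{(2)}_{n0}=\hat\rho^{(1)}_n\bigl(\psi^{(1)}_{n0}-\psi^{(2)}_{n0}\bigr)$ is a cleaner derivation of \eqref{estvvpsi}--\eqref{estvvpsip} than the paper's case-splitting. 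The structural difference is that the paper never expands the full triple product: it first gets the operator-norm bound $\|R^{(1)}-R^{(2)}\|_{m\to m}\le CZ$ from the resolvent identity, then bootstraps through the two element-wise identities $R=\tilde R-R\tilde R$ (rough bound $C(\zeta_k+Z\xi_k^{(1)})$) and $R=\tilde R-\tilde RR$ (refined bound), so only single convolutions ever appear. Your single expansion forces you through a double convolution; it is doable, but you also need to convert the $\xi_k^{(2)}$ coming from the right factor $R^{(2)}_{\cdot,kj}$ into $\xi_k^{(1)}+\zeta_k$ and absorb $Z\zeta_k$ via $Z\le 2\Omega$, none of which you mention.

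The genuine gap is the auxiliary inequality you rest the whole convolution bookkeeping on: $\sum_{l}\tfrac{1}{(|n-l|+1)(|l-k|+1)}\le C\bigl(\tfrac{1}{|n-k|+1}+\eta_n+\eta_k\bigr)$ is false. The left-hand side behaves like $\tfrac{\log|n-k|}{|n-k|}$, while for $n\asymp k\asymp |n-k|$ the right-hand side is $O\bigl(\tfrac{1}{|n-k|}\bigr)$ since $\eta_n\asymp n^{-1}$; the ratio diverges logarithmically. Moreover, once you have factored the weights out of the sum (which is what "plugging \eqref{estdifRte} and \eqref{estR} in, then using the convolution bound" amounts to), you cannot recover the factor $\zeta_k+Z\xi_k^{(1)}$ attached to the $\eta$-tail, because $\sup_l\xi_l$ is not comparable to $\xi_k$. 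The correct mechanism --- the one that produces $\eta_n$ in \eqref{estR} and in the paper's proof of \eqref{estdifR} --- is to keep the $l_2^1$-decaying weight \emph{inside} the sum and apply Cauchy--Schwarz against the sequence $\bigl\{\tfrac{1}{l(|l-n|+1)}\bigr\}_l$, whose $l_2$-norm is exactly $\eta_n$: for instance
\begin{equation*}
\sum_{l}\frac{\zeta_l}{|n-l|+1}=\sum_{l}(l\zeta_l)\cdot\frac{1}{l(|n-l|+1)}\le \bigl\|\{l\zeta_l\}\bigr\|_{l_2}\,\eta_n\le Z\eta_n,
\qquad
\sum_{l}\frac{\xi_l^{(1)}}{|n-l|+1}\le \Omega\,\eta_n,
\end{equation*}
and analogously with $\eta_k$ when the weight sits in the other slot. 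With this replacement every term in your four-piece (and, for the derivative, product-rule) expansion does close to the stated bounds, so the gap is repairable within your framework; but as written the key analytic step does not go through.
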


\begin{proof}
Throughout this proof, we mean that $C = C(\Omega, K)$ and omit the argument $(x)$ for brevity. All the obtained estimates are uniform with respect to $x \in [0,\pi]$.
Using \eqref{invR}, we get
\begin{equation*} 
R^{(1)} - R^{(2)} = (I + \tilde R^{(2)})^{-1} - (I + \tilde R^{(1)})^{-1} = (I + \tilde R^{(2)})^{-1} (\tilde R^{(1)} - \tilde R^{(2)}) (I + \tilde R^{(1)})^{-1}.
\end{equation*}
This together with the estimates \eqref{estdifRt} and $\| (I + \tilde R^{(s)})^{-1}\|_{m \to m} \le K$, $s = 1, 2$, imply
\begin{equation} \label{estdifRop}
\| R^{(1)} - R^{(2)} \|_{m \to m} \le C(\Omega, K) Z.
\end{equation}
It can be deduced from \eqref{invR} that
$$
R = \tilde R - R \tilde R, \quad R = \tilde R - \tilde R R.
$$
In the element-wise form, we get
$$
R_{ni,kj} = \tilde R_{ni,kj} - \sum_{l,s} R_{ni,ls} \tilde R_{ls,kj}, \quad
R_{ni,kj} = \tilde R_{ni,kj} - \sum_{l,s} \tilde R_{ni,ls} R_{ls,kj}.
$$
Hence
\begin{align} \label{reldifR1}
R_{ni,kj}^{(1)} - R_{ni,kj}^{(2)} & = \tilde R_{ni,kj}^{(1)} - \tilde R_{ni,kj}^{(2)} - \sum_{l,s} (R_{ni,ls}^{(1)} - R_{ni,ls}^{(2)}) \tilde R_{ls,kj}^{(1)} - \sum_{l,s} R_{ni,ls}^{(2)} (\tilde R_{ls,kj}^{(1)} - \tilde R_{ls,kj}^{(2)}), \\ \label{reldifR2}
R_{ni,kj}^{(1)} - R_{ni,kj}^{(2)} & = \tilde R_{ni,kj}^{(1)} - \tilde R_{ni,kj}^{(2)} - \sum_{l,s} (\tilde R_{ni,ls}^{(1)} - \tilde R_{ni,ls}^{(2)}) R_{ls,kj}^{(1)} - \sum_{l,s} \tilde R_{ni,ls}^{(2)} (R_{ls,kj}^{(1)} - R_{ls,kj}^{(2)}).
\end{align}
Using \eqref{estRt}, \eqref{estR}, \eqref{estdifRte}, and \eqref{estdifRop}, we obtain
\begin{align*}
& \left| \sum_{l,s} (R_{ni,ls}^{(1)} - R_{ni,ls}^{(2)}) \tilde R_{ls,kj}^{(1)} \right| \le \| R^{(1)} - R^{(2)} \|_{m \to m} \sup_{l,s} |\tilde R^{(1)}_{ls,kj}| \le CZ \xi_k^{(1)}, \\
& \left| \sum_{l,s} R_{ni,ls}^{(2)} (\tilde R_{ls,kj}^{(1)} - \tilde R_{ls,kj}^{(2)}) \right| \le \| R^{(2)} \|_{m \to m} \sup_{l,s} |\tilde R_{ls,kj}^{(1)} - \tilde R_{ls,kj}^{(2)}| \le C \zeta_k.
\end{align*}
Consequently, it follows from \eqref{reldifR1} that
\begin{equation} \label{estdifRrough}
\bigl| R_{ni,kj}^{(1)} - R_{ni,kj}^{(2)}\bigr| \le C (\zeta_k + Z \xi_k^{(1)}).
\end{equation}
Let us obtain a more precise estimate. Using \eqref{estRt}, \eqref{estR}, \eqref{estdifRte}, and \eqref{estdifRrough}, we get
\begin{align*}
& \left| \sum_{l,s} (\tilde R_{ni,ls}^{(1)} - \tilde R_{ni,ls}^{(2)}) R_{ls,kj}^{(1)} \right| \le C \sum_{l = 1}^{\infty} \frac{(\zeta_l + Z \xi_l^{(1)}) \xi_k^{(1)}}{|l - n| + 1} \le C Z \eta_n \xi_k^{(1)} \\
& \left| \sum_{l,s} \tilde R_{ni,ls}^{(2)} (R_{ls,kj}^{(1)} -  R_{ls,kj}^{(2)}) \right| \le C \sum_{l = 1}^{\infty} \frac{ \xi_l^{(2)} (\zeta_k + Z \xi_k^{(1)})}{|l - n| + 1} \le C \eta_n (\zeta_k + Z \xi_k^{(1)}).
\end{align*}
Using the latter estimates together with \eqref{estdifRte} in \eqref{reldifR2}, we arrive at \eqref{estdifR}. The estimates \eqref{estdifR2} and \eqref{estdifRp} are obtained analogously.

Next, using \eqref{findpsi}, we get
$$
\psi_{ni}^{(1)} - \psi_{ni}^{(2)} = \tilde \psi_{ni}^{(1)} - \tilde \psi_{ni}^{(2)} - \sum_{k,j} \bigl( R_{ni,kj}^{(1)} - R_{ni,kj}^{(2)}\bigr) \psi_{kj}^{(1)} - \sum_{k,j} R_{ni,kj}^{(2)} \bigl( \psi_{kj}^{(1)} - \psi_{kj}^{(2)}\bigr).
$$
This relation together with \eqref{estR}, \eqref{estpsi}, and the already proved estimates of Lemmas~\ref{lem:estt}, \ref{lem:estdift}, and \ref{lem:estdif} imply \eqref{estdifpsi}. The estimates \eqref{estdifvv} readily follow from \eqref{vvpsi}, \eqref{estdifpsi}, and $|\hat \rho_n^{(1)} - \hat \rho_n^{(2)}| \le \zeta_n$.

It remains to prove \eqref{estvvpsi} and \eqref{estvvpsip}. Suppose that $\rho_n^{(1)} \ne \tilde \rho_n$ and $\rho_n^{(2)} \ne \tilde \rho_n$ for some $n \ge 1$. Then, according to \eqref{not} and \eqref{defpsi}, the first relation in \eqref{estdifpsi} for $i = 0$ can be rewritten as follows:
$$
\left| \frac{\vv_{n0}^{(1)} - \vv_{n1}^{(1)}}{\rho_n^{(1)} - \tilde \rho_n} - \frac{\vv_{n0}^{(2)} - \vv_{n1}^{(2)}}{\rho_n^{(2)} - \tilde \rho_n}\right| \le C (\zeta_n + Z \eta_n).
$$
Multiplying the expression in the left-hand side by $(\rho_n^{(1)} - \tilde \rho_n)$ and using the estimate $|\rho_n^{(1)} - \tilde \rho_n| \le \xi_n^{(1)}$, we arrive at \eqref{estvvpsi}. The cases when $\rho_n^{(1)} = \tilde \rho_n$ or $\rho_n^{(2)} = \tilde \rho_n$ are considered analogously. Similarly, the estimate \eqref{estvvpsip} follows from the second relation in \eqref{estdifpsi} for $i = 0$.
\end{proof}

Using the sequences $\{ \vv_{ni}^{(s)}(x) \}$ and $\{ \tilde \vv_{ni}^{(s)}(x) \}$, one can construct the corresponding functions $\eps^{(s)}(x)$ for $s = 1, 2$ by formula \eqref{defeps}. The following lemma presents the estimate for their difference.

\begin{lem} \label{lem:esidifeps}
For $S^{(1)}, S^{(2)} \in \mathring{B}_{\Omega, K}$, the following estimate holds:
\begin{equation} \label{estdifeps}
\| \eps^{(1)} - \eps^{(2)} \|_{W_2^1} \le C(\Omega, K) Z.
\end{equation}
\end{lem}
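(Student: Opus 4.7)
The plan is to use the decomposition of the summand in \eqref{defeps} suggested by the change of variables \eqref{vvpsi}, and then bound $\eps^{(1)}(x)-\eps^{(2)}(x)$ termwise via the difference estimates of Lemma~\ref{lem:estdif}. Since $\vv_{k0}-\vv_{k1}=\hat\rho_k\psi_{k0}$ (with the analogous identity for the tildes) and $\alpha_{k1}=\tilde\alpha_k$, each summand of \eqref{defeps} admits the representation
\begin{equation*}
\alpha_{k0}\vv_{k0}\tilde\vv_{k0} - \alpha_{k1}\vv_{k1}\tilde\vv_{k1} = \alpha_{k0}\hat\rho_k\bigl(\psi_{k0}\tilde\vv_{k0} + \vv_{k1}\tilde\psi_{k0}\bigr) + (\alpha_{k0}-\tilde\alpha_k)\vv_{k1}\tilde\vv_{k1},
\end{equation*}
which separates the small ``spectral shifts'' $\hat\rho_k$ and $(\alpha_{k0}-\tilde\alpha_k)$ (each of magnitude at most $\xi_k^{(s)}$) from uniformly bounded factors.

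Next, I would form the difference of this identity for $s=1,2$ and iteratively apply the telescoping rule $a^{(1)}b^{(1)}-a^{(2)}b^{(2)}=(a^{(1)}-a^{(2)})b^{(1)} + a^{(2)}(b^{(1)}-b^{(2)})$, so that every resulting summand contains exactly one factor of the form $(\alpha_{k0}^{(1)}-\alpha_{k0}^{(2)})$, $(\hat\rho_k^{(1)}-\hat\rho_k^{(2)})$, $(\psi_{k0}^{(1)}-\psi_{k0}^{(2)})$, $(\tilde\vv_{k0}^{(1)}-\tilde\vv_{k0}^{(2)})$, $(\vv_{k1}^{(1)}-\vv_{k1}^{(2)})$, or $(\tilde\psi_{k0}^{(1)}-\tilde\psi_{k0}^{(2)})$. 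The magnitudes of these differences and of their $x$-derivatives are controlled by Lemmas~\ref{lem:estt}, \ref{lem:estdift}, \ref{lem:estdif} via $\zeta_k$, $Z\eta_k$, $k\zeta_k$, or $Z$, while the complementary factors are uniformly bounded by $C(\Omega,K)$ (respectively $C(\Omega,K)k$ under differentiation).

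For the $L_2$-norm of $\eps^{(1)}-\eps^{(2)}$, the pointwise triangle inequality combined with Cauchy–Schwarz and the bounds $\|\{\zeta_k\}\|_{l_2^1}=Z$, $\|\{\xi_k^{(s)}\}\|_{l_2^1}\le\Omega$, $\|\{\eta_k\}\|_{l_2}\le C$ suffices: the arising sums like $\sum_k\zeta_k\xi_k^{(s)}$ and $\sum_k\xi_k^{(s)}Z\eta_k$ are each bounded by $C(\Omega)Z$. For the $L_2$-norm of $(\eps^{(1)}-\eps^{(2)})'$, however, differentiation of the trigonometric-type factors $\tilde\vv_{k0}$, $\tilde\vv_{k1}=\cos(k-1)x$, $\tilde\psi_{k0}$, and their perturbed counterparts introduces an extra factor of $k$, and a naive pointwise bound leads to divergent tails. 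Here I would instead exploit the near-orthogonality of the cosine/sine systems in $L_2(0,\pi)$ in the spirit of the $W_2^1$-convergence proof of Proposition~\ref{prop:recqhH} in \cite{FY01}: Bessel-type (Parseval) inequalities replace $\sum_k k|c_k|$ by $\bigl(\sum_k k^2|c_k|^2\bigr)^{1/2}$, which is again $\le C(\Omega,K)Z$ via $\|\{k\zeta_k\}\|_{l_2}=Z$.

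The main obstacle is precisely this derivative estimate. The scheme has to parallel the argument that the original series \eqref{defeps} converges in $W_2^1$, but now applied to the double-indexed difference, and it is necessary to track all constants so that they depend only on $\Omega$ and $K$ and not on the individual sequences $S^{(1)}$, $S^{(2)}$.
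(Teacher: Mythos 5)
Your overall strategy — telescope the difference of the series \eqref{defeps} term by term, let absolute convergence handle every summand that carries an extra small factor, and fall back on Bessel/Parseval for the oscillating leading parts whose derivatives pick up a factor of $k$ — is exactly the strategy of the paper, and your algebraic identity $\al_{k0}\vv_{k0}\tilde\vv_{k0}-\al_{k1}\vv_{k1}\tilde\vv_{k1}=\al_{k0}\hat\rho_k(\psi_{k0}\tilde\vv_{k0}+\vv_{k1}\tilde\psi_{k0})+(\al_{k0}-\tilde\al_k)\vv_{k1}\tilde\vv_{k1}$ is correct and encodes the same cancellations. The organisation differs: the paper splits $\eps=\tilde\eps+\hat\eps$, where $\tilde\eps$ involves only the explicit functions $\cos^2(\rho_k x)$, so that $\tfrac{d}{dx}(\tilde\eps^{(1)}-\tilde\eps^{(2)})$ can be written out \emph{explicitly} via \eqref{asympt} as $-\sum_k\bigl(s_k^-\sin 2kx+\tfrac{4}{\pi}x\varkappa_k^-\cos 2kx+r_k(x)\bigr)$ with $\|\{s_k^-\}\|_{l_2},\|\{\varkappa_k^-\}\|_{l_2}\le Z$ and an absolutely convergent remainder; the Bessel argument is then applied only to the two explicit trigonometric systems. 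The correction $\hat\eps^{(1)}-\hat\eps^{(2)}$, by contrast, is shown to converge absolutely even after differentiation, and for this the paper needs precisely the second-order difference estimates \eqref{estvvpsi}--\eqref{estvvpsip}, which supply the extra factor $\xi_k^{(1)}$ that makes sums like $\sum_k\xi_k^{(1)}(k\zeta_k+Z)$ finite. Your decomposition contains the same information (indeed $\vv_{k0}-\vv_{k1}=\hat\rho_k\psi_{k0}$ is the identity behind \eqref{estvvpsi}), but it routes the Bessel step through the less explicit products $\psi_{k0}\tilde\vv_{k0}$, $\vv_{k1}\tilde\psi_{k0}$, whose oscillating leading parts would first have to be extracted using $\psi_{k0}=\tilde\psi_{k0}+O(\eta_k)$, $\psi_{k0}'=\tilde\psi_{k0}'+O(1)$ from \eqref{estpsi}.

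The one substantive shortfall is the step you yourself flag as ``the main obstacle'': the $L_2$ bound on the derivative is only described (``in the spirit of the $W_2^1$-convergence proof''), not executed. To close it you must (i) identify, in each telescoped term whose small factor is merely $\zeta_k$ against an $O(k)$ derivative, the explicit oscillating leading part (of the type $k\sin 2kx$, $kx\cos 2kx$) and verify that these families are Bessel systems in $L_2(0,\pi)$ with coefficients of $l_2$-norm at most $CZ$; and (ii) verify that every non-oscillating remainder is dominated by an absolutely convergent series such as $\sum_k\zeta_k$, $\sum_k k\zeta_k\xi_k$, $\sum_k k\xi_kZ\eta_k$, each $\le C(\Omega,K)Z$. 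Both verifications go through with the estimates of Lemmas~\ref{lem:estt}, \ref{lem:estdift}, \ref{lem:estdif} already at your disposal, so the plan is sound; it is simply not yet a proof at the point where the lemma is actually hard.
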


\begin{proof}
Let us represent \eqref{defeps} in the form $\eps = \tilde \eps + \hat \eps$, where
\begin{align*}
\tilde \eps & := \sum_{k = 1}^{\infty} \bigl( \al_{k0} \tilde \vv_{k0}^2 - \al_{k1} \tilde\vv_{k1}^2\bigr), \\
\hat \eps & := \sum_{k = 1}^{\infty} \bigl( \al_{k0} \tilde \vv_{k0} (\vv_{k0} - \tilde \vv_{k0}) - \al_{k1} \tilde \vv_{k1} (\vv_{k1} - \tilde \vv_{k1})\bigr).
\end{align*}
Then
$$
\tilde \eps^{(1)} - \tilde \eps^{(2)} = \sum_{k = 1}^{\infty} \bigl( \al_{k0}^{(1)} (\tilde \vv_{k0}^{(1)})^2 - \al_{k0}^{(2)} (\tilde \vv_{k0}^{(2)})^2 \bigr) = \sum_{k = 1}^{\infty} \bigl( \al_k^{(1)} \cos^2 (\rho_k^{(1)} x) - \al_k^{(2)} \cos^2 (\rho_k^{(2)} x) \bigr).
$$
Differentiation implies
$$
\tfrac{d}{dx} \bigl( \tilde \eps^{(1)} - \tilde \eps^{(2)}\bigr)(x) = - \sum_{k = 1}^{\infty} \bigl(\al_k^{(1)} \rho_k^{(1)} \sin (2 \rho_k^{(1)} x) - \al_k^{(2)} \rho_k^{(2)} \sin (2 \rho_k^{(2)}x)\bigr).
$$
Therefore, using \eqref{asympt}, \eqref{defBO}, and \eqref{defZ}, we get
$$
\tfrac{d}{dx} \bigl( \tilde \eps^{(1)} - \tilde \eps^{(2)}\bigr)(x) = - \sum_{k = 1}^{\infty} \left( s_k^- \sin 2kx + \tfrac{4}{\pi} x \varkappa_k^- \cos 2kx + r_k(x) \right),
$$
where 
$$
s_k^- := s_k^{(1)} - s_k^{(2)}, \quad \varkappa_k^- := \varkappa_k^{(1)} - \varkappa_k^{(2)}, \quad \bigl\| \{ s_k^- \} \bigr\|_{l_2} \le Z, \quad \bigl\| \{ \varkappa_k^- \} \|_{l_2} \le Z,
$$
and the series of the remainder terms $\sum_{k \ge 1} r_k(x)$ converges absolutely and uniformly on $[0,\pi]$ to a continuous function bounded by $C(\Omega) Z$. Hence 
\begin{equation} \label{smeps1}
\| \tfrac{d}{dx} \bigl( \tilde \eps^{(1)} - \tilde \eps^{(2)} \bigr)\|_{L_2} \le C(\Omega) Z.
\end{equation}

Using the estimates for $\vv_{ni}^{(s)}$ and $\tilde \vv_{ni}^{(s)}$ from Lemmas~\ref{lem:estt}, \ref{lem:estdift}, \ref{lem:estdif} together with \eqref{estvv}, one can show that the series 
\begin{align*}
\eps^{(1)} - \eps^{(2)} & = \sum_{k = 1}^{\infty} \bigl( \al_{k0}^{(1)} \tilde \vv_{k0}^{(1)} \vv_{k0}^{(1)} - \al_{k0}^{(2)} \tilde \vv_{k0}^{(2)} \vv_{k0}^{(2)} - \al_{k1} \tilde \vv_{k1} (\vv_{k1}^{(1)} - \vv_{k1}^{(2)})\bigr), 
\\
\tfrac{d}{dx} \bigl( \hat \eps^{(1)} - \hat \eps^{(2)}\bigr) & =
\sum_{k = 1}^{\infty} \tfrac{d}{dx}\bigl( \al_{k0}^{(1)} \tilde \vv_{k0}^{(1)} (\vv_{k0}^{(1)} - \tilde \vv_{k0}^{(1)}) - \al_{k0}^{(2)} \tilde \vv_{k0}^{(2)} (\vv_{k0}^{(2)} - \tilde \vv_{k0}^{(2)}) - \al_{k1} \tilde \vv_{k1} (\vv_{k1}^{(1)} - \vv_{k1}^{(2)}) \bigr)
\end{align*}
converge absolutely and uniformly on $[0,\pi]$ and
\begin{equation} \label{smeps2}
\max_{x \in [0,\pi]}\left| \bigl(\eps^{(1)} - \eps^{(2)}\bigr)(x) \right| \le C(\Omega, K) Z, \quad
\max_{x \in [0,\pi]}\left| \tfrac{d}{dx} \bigl( \hat \eps^{(1)} - \hat \eps^{(2)}\bigr)(x) \right| \le C(\Omega, K) Z.
\end{equation}
Combining \eqref{smeps1} and \eqref{smeps2}, we arrive at \eqref{estdifeps}.
\end{proof}

Lemma~\ref{lem:esidifeps} together with Proposition~\ref{prop:recqhH} yield Theorem~\ref{thm:uni} on the uniform stability of Inverse Problem~\ref{ip:simp} on the spectral data set $\mathring{B}_{\Omega, K}$.

\section{Unconditional uniform stability} \label{sec:uncond}

In this section, we consider certain classes of the non-self-adjoint Sturm-Liouville operators for which the condition $\| (I + \tilde R(x))^{-1} \|_{m \to m} \le K$ follows from some spectral data properties. As a result, we prove Theorem~\ref{thm:V}.

For $\Omega > 0$ and $\de \in (0, 1)$, consider the set $\mathring{\mathcal V}_{\Omega, \de} := \mathcal V_{\Omega, \de} \cap \mathring B_{\Omega}$, where $\mathcal V_{\Omega, \de}$ was defined by the requirements \eqref{reqrho} and \eqref{reqalpha}.
Each $S$ of $\mathring{\mathcal V}_{\Omega, \de}$ fulfills the Condition~1 of Theorem~\ref{thm:suff}, so the corresponding inverse operator $(I + \tilde R(x))^{-1}$ exists for each $x \in [0,\pi]$. Let us show that these operators are uniformly bounded with respect to $S \in \mathring{\mathcal V}_{\Omega, \de}$.

\begin{lem} \label{lem:V}
$\mathring{\mathcal V}_{\Omega, \de} \subset \mathring{B}_{\Omega, K}$ for some $K = C(\Omega, \de)$, that is, for each $S \in \mathring{\mathcal V}_{\Omega, \de}$ and $x \in [0,\pi]$, the inequality $\| (I + \tilde R(x))^{-1} \|_{m \to m} \le K$ holds.
\end{lem}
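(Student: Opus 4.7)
The plan is to argue by contradiction and combine weak-compactness of the data ball with the Fredholm-type invertibility established in Theorem~\ref{thm:suff}. Suppose the claim fails. Then there exist sequences $\{S^{(k)}\} \subset \mathring{\mathcal V}_{\Omega,\de}$, points $x_k \in [0,\pi]$, and vectors $\beta^{(k)} \in m$ with $\|\beta^{(k)}\|_m = 1$ such that $\eta^{(k)} := (I + \tilde R^{(k)}(x_k))\beta^{(k)} \to 0$ in $m$. The goal is to extract from these sequences a limit triple $(S^*, x^*, \beta^*)$ with $S^* \in \mathring{\mathcal V}_{\Omega,\de}$, $x^* \in [0,\pi]$, and $\beta^* \in m$ nonzero satisfying $(I + \tilde R^*(x^*))\beta^* = 0$, which contradicts Theorem~\ref{thm:suff}.

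For the extraction of limits I use weak compactness in $l_2$. Since $S^{(k)} \in \mathring B_\Omega$ (so $\om = 0$), write $\rho_n^{(k)} = n-1 + \varkappa_n^{(k)}/n$ and $\al_n^{(k)} = \tilde \al_n + s_n^{(k)}/n$; the defining bound of $\mathring B_\Omega$ via \eqref{defBO} forces $\{\varkappa_n^{(k)}\}$ and $\{s_n^{(k)}\}$ to be bounded in $l_2$ uniformly in $k$. Passing to a subsequence, $x_k \to x^* \in [0,\pi]$ and both sequences converge weakly in $l_2$ to some $\{\varkappa_n^*\}, \{s_n^*\}$, which determine a candidate $S^*$. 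Weak convergence yields pointwise convergence of $\rho_n^{(k)}, \al_n^{(k)}$ at each $n$; the constraints defining $\mathring{\mathcal V}_{\Omega,\de}$ (realness of $\rho_n$, the gap $\rho_{n+1} - \rho_n \ge \de$, and $|\al_n| \ge \de$) close under such pointwise limits, and the argument condition \eqref{reqalpha} survives after a further subsequencing so that the extremal arguments converge. Hence $S^* \in \mathring{\mathcal V}_{\Omega,\de}$.

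The essential analytic step is upgrading this weak mode of convergence to operator-norm convergence $\tilde R^{(k)}(x_k) \to \tilde R^*(x^*)$ in $m \to m$. Estimate \eqref{estdifRt} gives $\|\tilde R^{(k)}(x) - \tilde R^*(x)\|_{m\to m} \le C(\Omega) \|\{\zeta_n^{(k,*)}\}\|_{l_2}$, where $n \zeta_n^{(k,*)} = \sqrt{|\varkappa_n^{(k)} - \varkappa_n^*|^2 + |s_n^{(k)} - s_n^*|^2}$, so
\[
\|\{\zeta_n^{(k,*)}\}\|_{l_2}^2 = \sum_{n \ge 1} \frac{|\varkappa_n^{(k)} - \varkappa_n^*|^2 + |s_n^{(k)} - s_n^*|^2}{n^2}.
\]
The diagonal multiplication $(a_n) \mapsto (a_n/n)$ is a compact map from $l_2$ to $l_2$, so it sends the weakly null sequences $\{\varkappa^{(k)} - \varkappa^*\}$ and $\{s^{(k)} - s^*\}$ to strongly null sequences. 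Thus $\|\tilde R^{(k)}(x) - \tilde R^*(x)\|_{m\to m} \to 0$ uniformly in $x$; combined with the Lipschitz bound $|\tilde R_{ni,kj}'(x)| \le C\xi_k$ from \eqref{estRt}, which renders $\tilde R^*(\cdot)$ operator-norm continuous on $[0,\pi]$, I obtain $\tilde R^{(k)}(x_k) \to \tilde R^*(x^*)$ in operator norm. Moreover, the same estimate together with $\xi_k \to 0$ shows that $\tilde R^*(x^*)$ is approximable in operator norm by its finite-rank truncations, hence is compact on $m$.

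To conclude, I exploit this compactness. The bounded sequence $\{\tilde R^*(x^*)\beta^{(k)}\}$ has a convergent sub-subsequence in $m$, say with limit $\mu$. The identity $\beta^{(k)} = \eta^{(k)} - \tilde R^{(k)}(x_k)\beta^{(k)} = -\tilde R^*(x^*)\beta^{(k)} + o(1)$ then forces $\beta^{(k)} \to -\mu =: \beta^*$ in $m$ with $\|\beta^*\|_m = 1$, and passage to the limit in the defining equation gives $(I + \tilde R^*(x^*))\beta^* = 0$. Since $S^* \in \mathring{\mathcal V}_{\Omega,\de}$ satisfies Condition~1 of Theorem~\ref{thm:suff}, the operator $(I + \tilde R^*(x^*))$ is invertible, so $\beta^* = 0$, contradicting $\|\beta^*\|_m = 1$. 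The main obstacle is the upgrade from weak to operator-norm convergence of $\tilde R^{(k)}(x_k)$, which crucially uses the $1/n$ factor in the asymptotics via the compactness of diagonal multiplication on $l_2$; the verification that $\mathring{\mathcal V}_{\Omega,\de}$ is sequentially closed under this mode of convergence is the second delicate point.
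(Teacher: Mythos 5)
Your proposal is correct and follows essentially the same route as the paper: argue by contradiction, use weak compactness of the data in $l_2^1$ together with the compact embedding into $l_2$ (your ``diagonal multiplication by $1/n$'' map) to upgrade to operator-norm convergence of $\tilde R^{(k)}$ via \eqref{estdifRt}, check that the limit data remain in $\mathring{\mathcal V}_{\Omega,\de}$, and invoke the invertibility guaranteed by Condition~1 of Theorem~\ref{thm:suff}. The only (immaterial) difference is the final step: the paper concludes directly from norm convergence plus invertibility of the limit operator that the inverses are uniformly bounded, whereas you extract a genuine null vector from the near-null sequence using compactness of $\tilde R^*(x^*)$ -- both are valid.
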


\begin{proof}
For a fixed $S \in \mathring{\mathcal V}_{\Omega, \de}$, we have 
\begin{equation} \label{estIRt}
\max_{x \in [0,\pi]} \| (I + \tilde R(x))^{-1} \|_{m \to m} < \infty,
\end{equation}
since $\tilde R(x)$ is continuous with respect to $x$. Let us prove the lemma by contradiction. Suppose that there exists a sequence $\{ S^{(k)} \} \subset \mathring{\mathcal V}_{\Omega, \de}$ such that
\begin{equation} \label{liminf}
\lim_{k \to \infty} \max_{x \in [0,\pi]} \| (I + \tilde R^{(k)}(x))^{-1} \|_{m \to m} = \infty.
\end{equation}

Put $\hat \rho_n^{(k)} := \rho_n^{(k)} - \tilde \rho_n$, $\hat \al_n^{(k)} := \al_n^{(k)} - \tilde \al_n$, and denote by $\hat S^{(k)}$ the sequence $\{ \hat \rho_1^{(k)}, \hat \al_1^{(k)}, \hat \rho_2^{(k)}, \hat \al_2^{(k)}, \dots \}$. It follows from \eqref{defBO} that $\hat S^{(k)} \in l_2^1$ and $\| \hat S^{(k)} \|_{l_2^1} \le \Omega$.
Then, from the sequence $\{ \hat S^{(k)} \}$, we can extract a subsequence such that $\{ \hat S^{(i_k)} \}$ weakly converges in $l_2^1$. Without loss of generality, put $\{ S^{(k)} \} = \{ S^{(i_k)} \}$. Denote the limit of $\{ S^{(k)} \}$ by $\hat S = \{ \hat \rho_1, \hat \al_1, \hat \rho_2, \hat \al_2, \dots \}$ and put $\rho_n := \tilde \rho_n + \hat \rho_n$, $\al_n := \tilde \al_n + \hat \al_n$, $S = \{ \rho_n, \al_n \}_{n \ge 1}$. Clearly, $S \in \mathring{B}_{\Omega}$. Furthermore, the weak convergence implies that $\{ S^{(k)} \}$ converges to $S$ element-wise, so $S$ fulfills the conditions \eqref{reqrho}.

Let us show that $S$ satisfies \eqref{reqalpha}. Since $S \in \mathring{B}_{\Omega}$, we have $|\hat \al_n| \le \frac{\Omega}{n}$, $n \ge 1$. Hence, starting from a sufficiently large index $N$, all $\alpha_n$'s are so close to $\tilde \al_n = \frac{2}{\pi}$ that $\arg \al_n \in (-\theta, \theta)$ for small $\theta > 0$, $n \ge N$. Next, choose such a number $k$ that the values $\{ \al_n^{(k)} \}_{n = 1}^{N-1}$ are so close to $\{ \al_n \}_{n = 1}^{N-1}$ that all $\{ \al_n \}_{n = 1}^{N-1}$ lie in some sector of angle $(\pi - (\de - \theta))$, because $\{ \al_n^{(k)} \}_{n = 1}^{N-1}$ lie in some sector of angle $(\pi - \de)$. Summarizing all above, we conclude that all $\{ \al_n \}_{n \ge 1}$ lie in a sector of angle $(\pi - (\de - \theta))$. Since $\theta > 0$ can be arbitrary, we arrive at \eqref{reqalpha}. 

Thus, $S \in \mathcal V_{\Omega, \de}$, so \eqref{estIRt} holds. Since $l_2^1$ is compactly embedded into $l_2$, then the weak convergence of $\{ \hat S^{(k)} \}$ in $l_2^1$ implies the strong convergence of $\{ \hat S^{(k)} \}$ to $S$ in $l_2$. In other words, the sequence $\{ \zeta_n^{(k)} \}$ tends to zero in $l_2$-norm as $k \to \infty$, where $\zeta_n^{(k)} := \sqrt{|\rho_n^{(k)} - \rho_n|^2 + |\al_n^{(k)} - \al_n|^2}$. By virtue of \eqref{estdifRt}, this implies that $\tilde R^{(k)}(x)$ tends to $\tilde R(x)$ in the operator norm uniformly with respect to $x \in [0,\pi]$. Consequently, the operator norms $\| (I + \tilde R^{(k)}(x))^{-1} \|$ are uniformly bounded with respect to $k$ and $x$. This contradicts to \eqref{liminf} and so concludes the proof.
\end{proof}

\begin{remark} \label{rem:Vpm}
The assertions similar to Lemma~\ref{lem:V} are valid for the sets $\mathring{\mathcal V}^{\pm}_{\Omega, \tau} := \mathcal V_{\Omega,\tau} \cap \mathring{B}_{\Omega}$ with a constant $K = C(\Omega, \tau)$. The proof is analogous. Indeed, the conditions $\al_n < 0$, $n \in \mathcal Z$, and $\pm \mbox{Im}\,\al_n \ge \tau_n$, $n \not\in \mathcal Z$, for the limit data $S$ follow from the element-wise convergence of the sequence $\{ S^{(k)} \}$.  
\end{remark}

Lemma~\ref{lem:V} and Remark~\ref{rem:Vpm} together with Theorem~\ref{thm:uni} imply the assertion of Theorem~\ref{thm:V} in the case $\om^{(1)} = \om^{(2)} = 0$. For passing to the general case, we apply a constant shift. Before that, we need the following remark.

\begin{remark} \label{rem:V}
The asymptotics \eqref{asympt} and $S \in \mathring{B}_{\Omega}$ imply that 
$$
|\rho_{n + 1} - \rho_n| \ge 1 - \frac{2 \Omega}{n}.
$$
Consequently, there exists a sufficiently large index $N = N(\Omega, \de)$ such that, for all $n \ge N$, the inequality $\rho_{n+1} - \rho_n \ge \de$ holds automatically if $\{ \rho_n \}$ are real. Therefore, we can equivalently replace the conditions on $\rho_n$ in \eqref{reqrho} by the requirements $\la_n \in \mathbb R$, $\la_1 \ge 0$, $\la_{n + 1} - \la_n \ge \de$ for $n \le N$. Moreover, the condition $\la_1 \ge 0$ here is just technical. It can be replaced by $\la_1 \ge -c$, $c > 0$. Then the constant $C$ in Theorem~\ref{thm:V} will additionally depend on $c$. We can completely remove this condition, since $S \in \mathring{B}_{\Omega}$ implies $\la_1 \ge -C(\Omega)$. To sum up, introduce the subsets $\mathring{\mathcal U}_{\Omega, \de}$ and $\mathring{\mathcal U}_{\Omega, \tau}^{\pm}$ of sequences $S \in \mathring{B}_{\Omega}$ satisfying the condition
$$
\la_n \in \mathbb R, \quad \la_{n + 1} - \la_n \ge \de, \: n \le N, \quad |\al_n| \ge \de, \: n \ge 1,
$$
and \eqref{reqalpha} or \eqref{reqalpha2}, respectively. Then, the assertion of Theorem~\ref{thm:V} is valid for $\mathring{\mathcal U}_{\Omega, \de}$ and $\mathring{\mathcal U}^{\pm}_{\Omega, \tau}$.
\end{remark}

\begin{proof}[Proof of Theorem~\ref{thm:V}]
Let $\{ \rho_n, \al_n \}_{n \ge 1}$ be the spectral data of a problem $L(q, h, H)$, $\la_n = \rho_n^2$. Obviously, a constant shift of the potential $\mathring{q} := q - c$ similarly shifts the eigenvalues $\mathring{\la}_n := \la_n - c$, $n \ge 1$, while the coefficients of the boundary conditions as well as the generalized weight numbers remain unchanged: $\mathring{h} = h$, $\mathring{H} = H$, $\mathring{\al}_n = \al_n$, $n \ge 1$. Applying the shift $c = \tfrac{2}{\pi} \om$, we get $\mathring{\om} = 0$.

Suppose that $S \in \mathcal V_{\Omega,\de}$ with $\om \ne 0$. Applying the shift, we get the new data $\mathring{S} = \{ \mathring{\rho}_n, \al_n \}_{n \ge 1} \in \mathring{\mathcal U}_{\Omega_0, \de_0}$, where $\Omega_0 > 0$ and $\de_0 \in (0, 1)$ depend only on $\Omega$ and $\de$. By shifting two spectral data sets $S^{(1)}$ and $S^{(2)}$ of $\mathcal V_{\Omega, \de}$ by $\tfrac{2}{\pi} \om^{(1)}$ and $\tfrac{2}{\pi} \om^{(2)}$, respectively ($\om^{(1)}$ and $\om^{(2)}$ can be distinct), we obtain the new sets $\mathring{S}^{(1)}$ and $\mathring{S}^{(2)}$ in $\mathring{\mathcal U}_{\Omega_0, \de_0}$. Remark~\ref{rem:V} implies the following stability estimate:
\begin{equation} \label{estsh}
\| \mathring{q}^{(1)} - \mathring{q}^{(2)} \|_{L_2} + |h^{(1)} - h^{(2)}| + |H^{(1)} - H^{(2)}| \le C(\Omega_0, \de_0) \sqrt{\sum_{n = 1}^{\infty} \bigl( |\mathring{\varkappa}_n^{(1)} - \mathring{\varkappa}_n^{(2)}|^2 + |s_n^{(1)} - s_n^{(2)}|^2 \bigr)}.
\end{equation}

One can easily show that
\begin{gather*}
\| \{ \mathring{\varkappa}_n^{(1)} - \mathring{\varkappa}_n^{(2)}\} \|_{l_2} \le C(\Omega) \| \{ \varkappa_n^{(1)} - \varkappa_n^{(2)}\} \|_{l_2}, \\
\| q^{(1)} - q^{(2)} \|_{L_2} \le \| \mathring{q}^{(1)} - \mathring{q}^{(2)} \|_{L_2} + \frac{2}{\sqrt \pi} |\om^{(1)} - \om^{(2)}|.
\end{gather*}
Combining the latter estimates with \eqref{estsh}, we arrive at \eqref{uni} with $C = C(\Omega, \de)$. Thus, the theorem is proved for $\mathcal V_{\Omega,\de}$. The proof for $\mathcal V_{\Omega,\tau}^{\pm}$ is similar.
\end{proof}

\section{Conditional uniform stability} \label{sec:cond}

In this section, we study the uniform stability of Inverse Problem~\ref{ip:simp} under a priori bounds on the parameters $q$, $h$, and $H$. In this case, we show that the operator $(I + \tilde R(x))^{-1}$ is uniformly bounded by using its explicit form \eqref{invR}. As a result, Theorem~\ref{thm:uniQA} on the unconditional uniform stability is deduced from Theorem~\ref{thm:uni}.

Let $Q > 0$ and $(q, h, H) \in P_Q$, where $P_Q$ was defined in \eqref{defPQ}. Obviously, $|\omega| \le C(Q)$, where $\om$ is defined by \eqref{defom}. The following lemma describes the spectral data properties for the problem $L(q, h, H)$.

\begin{lem} \label{lem:sdQ}
For $(q, h, H) \in P_Q$, there exists an index $N = N(Q) \ge 0$ such that:
\begin{enumerate}
\item For $n \le N$, the eigenvalues lie in the circle $|\rho_n| \le (N-\tfrac{3}{4})$.
\item For $n > N$, the eigenvalues are simple and lie in the circles $|\rho_n - \tilde \rho_n| \le \tfrac{1}{4}$.
\item The remainders in the asymptotics formulas \eqref{asympt} satisfy the uniform estimates
\begin{equation} \label{estrem}
\| \{ \varkappa_n\}_{n \ge 1} \|_{l_2} \le C(Q), \quad
\| \{ s_n \}_{n > N(Q)} \|_{l_2} \le C(Q).
\end{equation}
\end{enumerate}
\end{lem}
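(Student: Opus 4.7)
The plan is to derive everything from the integral representations \eqref{intDel} and \eqref{intDel0} together with uniform bounds on the kernels $\mathscr N, \mathscr N_0$ in terms of $Q$. The starting point is the transformation operator representation $\vv(x,\rho) = \cos\rho x + \int_0^x K(x,t)\cos\rho t\,dt$, whose kernel $K$ satisfies the Gelfand-Levitan-type integral inequality that yields $\|K(x,\cdot)\|_{L_2(0,x)} \le C(Q)$ uniformly on $P_Q$. Substituting into \eqref{defDelta} and carrying out standard integrations by parts gives $\|\mathscr N\|_{L_2}, \|\mathscr N_0\|_{L_2} \le C(Q)$, and $|\omega|, |\omega_0| \le C(Q)$ follows directly from \eqref{defom}.

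Items 1 and 2 will then be obtained by Rouch\'e's theorem applied to the model function $\tilde\Delta(\rho) = -\rho\sin\rho\pi$. On the circles $\gamma_n := \{\rho\colon |\rho-(n-1)|=1/4\}$ and on the large contour $\{|\rho|=N-3/4\}$ one has the standard lower bound $|\sin\rho\pi| \ge c e^{|\mathrm{Im}\,\rho|\pi}$ with absolute constant $c>0$, while the perturbation term $\omega\cos\rho\pi + \int_0^\pi\mathscr N(t)\cos\rho t\,dt$ grows at most like $C(Q)e^{|\mathrm{Im}\,\rho|\pi}$ and, by the Riemann-Lebesgue-type estimate $|\int_0^\pi \mathscr N(t)\cos\rho t\,dt| \le \|\mathscr N\|_{L_2}\bigl(\int_0^\pi\cos^2\rho t\,dt\bigr)^{1/2}$, is dominated by $|\rho\sin\rho\pi|$ once $|\rho|\ge N(Q)$ for a suitable $N = N(Q)$. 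Hence $\Delta$ and $\tilde\Delta$ have the same number of zeros inside each small circle $\gamma_n$ ($n>N$) and inside $\{|\rho|<N-3/4\}$, giving simplicity and the claimed localization for $n > N$, as well as the bound $|\rho_n| \le N - 3/4$ for $n \le N$.

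For item 3, write $\rho_n = n-1+\delta_n$ for $n>N$ with $|\delta_n|\le 1/4$ and substitute into $\Delta(\rho_n)=0$. Dividing by $\rho_n\cos\rho_n\pi$ (legitimate because $|\cos\rho_n\pi|\ge c>0$ on $\gamma_n$) and solving for $\tan(\delta_n\pi)$ yields, after expansion, $\delta_n = \omega/(\pi(n-1)) + \varkappa_n/n$, where $\varkappa_n$ is a sum of terms that are either of order $1/n$ times $\int_0^\pi\mathscr N(t)\cos\rho_n t\,dt$ or quadratically small. The $l_2$ bound on $\{\varkappa_n\}$ then reduces to the Bessel-type inequality $\sum_{n>N}\bigl|\int_0^\pi\mathscr N(t)\cos\rho_n t\,dt\bigr|^2 \le C\|\mathscr N\|_{L_2}^2 \le C(Q)$, which holds because $\{\cos\rho_n t\}_{n>N}$ is a Riesz system in $L_2(0,\pi)$ with constants depending only on the closeness $|\rho_n-(n-1)|\le 1/4$ (Kadec-type perturbation of the orthonormal system $\{\cos(n-1)t\}$). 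For $\alpha_n$, we apply the same procedure to $\dot\Delta(\rho_n)$ and $\Delta_0(\rho_n)$ via the standard identity linking $\alpha_n$ to these values, or directly expand $\alpha_n^{-1} = \int_0^\pi\vv^2(x,\rho_n)\,dx = \pi/2 + \tilde s_n/n$ using the transformation operator, and a parallel Bessel argument on $\mathscr N_0$ yields $\|\{s_n\}_{n>N}\|_{l_2}\le C(Q)$.

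The main obstacle is ensuring that the Riesz-basis constant for $\{\cos\rho_n t\}_{n>N}$ and the lower bounds $|\cos\rho_n\pi|, |\dot{\tilde\Delta}(\rho_n)|\ge c$ used in the inversion above are controlled purely by $Q$ and not by the specific triple $(q,h,H)$. This is precisely what Step 2 buys us: once $N = N(Q)$ is chosen so that every $\rho_n$ with $n>N$ sits in a $1/4$-neighbourhood of $n-1$, all the standard Kadec-Paley-Wiener perturbation constants become absolute, and every estimate in Steps 3-4 goes through uniformly on $P_Q$.
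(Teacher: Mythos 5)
Your proposal is correct and follows essentially the same route as the paper's (deliberately brief) proof: uniform $C(Q)$-bounds on $\mathscr N$, $\mathscr N_0$ via transformation operators, Rouch\'e's theorem applied to \eqref{intDel} for the eigenvalue localization, and the residue formula $\al_n = 2\rho_n\Delta_0(\rho_n)/\dot\Delta(\rho_n)$ together with Bessel-type inequalities for the $l_2$ remainder estimates. Only cosmetic imprecisions remain (e.g.\ the lower bound on $|\cos\rho_n\pi|$ is needed \emph{inside} $\gamma_n$ rather than on it, one needs $|\dot\Delta(\rho_n)|\ge c\,n$ rather than $\ge c$, and at the borderline value $1/4$ only the upper Bessel bound of the Kadec-type argument is available --- which is all you actually use).
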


Location of eigenvalues according to Lemma~\ref{lem:sdQ} is presented in Fig.~\ref{img:eigen}, where stars and dots denote the points $\rho_n$ and $\tilde \rho_n$, respectively.

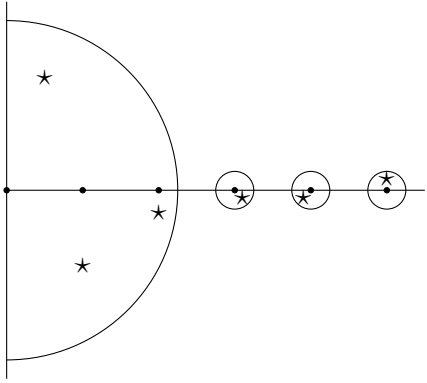
\begin{figure}[h!]
\centering
\begin{tikzpicture}
\draw (0,-2.25) arc(-90:90:2.25);
\draw (0,-2.5) edge (0,2.5);
\draw (0,0) edge (5.5,0);
\draw (3,0) circle (0.25);
\draw (4,0) circle (0.25);
\draw (5,0) circle (0.25);
\filldraw (0,0) circle (1pt);
\filldraw (1,0) circle (1pt);
\filldraw (2,0) circle (1pt);
\filldraw (3,0) circle (1pt);
\filldraw (4,0) circle (1pt);
\filldraw (5,0) circle (1pt);
\draw (0.5, 1.5) node{$\star$};
\draw (1, -1) node{$\star$};
\draw (2, -0.3) node{$\star$};
\draw (3.1, -0.1) node{$\star$};
\draw (3.9, -0.1) node{$\star$};
\draw (5, 0.15) node{$\star$};
\end{tikzpicture}
\caption{Location of eigenvalues}
\label{img:eigen}
\end{figure}

\begin{proof}[Proof of Lemma~\ref{lem:sdQ}]
The proof of this lemma is based on the standard relations \eqref{intDel} and \eqref{intDel0}, so we outline it briefly.
The square roots $\{ \rho_n \}_{n \ge 1}$ of the eigenvalues of the problem $L(q, h, H)$ coincide with the zeros of the characteristic function
$\Delta(\rho) = \vv'(\pi, \rho) + H \vv(\pi,\rho)$.
This function satisfies \eqref{intDel} with $\| \mathscr N \|_{L_2} \le C(Q)$.
Using \eqref{intDel} together with Rouche's Theorem, we get the desired properties of the eigenvalues.

The generalized weight numbers as the residues of the Weyl function $\al_n = \Res_{\la = \la_n} M(\la)$, where
\begin{equation} \label{fracM}
M(\rho^2) = -\frac{\Delta_0(\rho)}{\Delta(\rho)}
\end{equation}
(see \cite[Section 1.4.4]{FY01} for details).
In \eqref{fracM}, $\Delta_0(\rho)$ is the function defined in \eqref{defDelta}, which can be represented as \eqref{intDel0} with $\| \mathscr N_0 \|_{L_2} \le C(Q)$.
 
Due to the assertion 1 of this lemma, the poles of $M(\la)$ are simple for $n \ge N(Q)$ and so
\begin{equation} \label{alphapole}
\al_n = \frac{2 \rho_n \Delta^0(\rho_n)}{\dot \Delta(\rho_n)}, \quad n \ge N(Q).
\end{equation}

Substituting \eqref{intDel} and \eqref{intDel0} into \eqref{alphapole} and using \eqref{asympt}, we obtain \eqref{estrem} for $\{ s_n \}$.
\end{proof}

Using the representations \eqref{intDel}, \eqref{fracM}, \eqref{intDel0} and the estimates $\| \mathscr N \|_{L_2}, \, \| \mathscr N_0 \|_{L_2} \le C(Q)$, we obtain the following corollary about the Weyl function.

\begin{cor} \label{cor:estM}
$|M(\rho^2)| \le C(Q)$ for $|\rho| = N(Q) - \tfrac{1}{2}$.
\end{cor}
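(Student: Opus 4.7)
The plan is to estimate $M(\rho^2)$ directly via the ratio formula \eqref{fracM}, bounding the numerator and denominator separately by means of the integral representations \eqref{intDel}--\eqref{intDel0}. Under $(q,h,H)\in P_Q$, the quantities $|\omega|,|\omega_0|$ are controlled by \eqref{defom} and the definition of $\omega_0$, and the proof of Lemma~\ref{lem:sdQ} already established $\|\mathscr{N}\|_{L_2},\|\mathscr{N}_0\|_{L_2}\le C(Q)$, so all four ingredients in \eqref{intDel}--\eqref{intDel0} are uniformly controlled in terms of $Q$.

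The key analytic input is the classical lower bound on a ``gap circle'': for $\rho$ on the circle $\Gamma_N=\{|\rho|=N-\tfrac12\}$ with $N\in\mathbb N$, one has $|\sin\rho\pi|\ge c_0 e^{|\mathrm{Im}\,\rho|\pi}$ with an absolute constant $c_0>0$ (the circle stays at distance $\ge 1/2$ from every integer zero of $\sin\rho\pi$). Combined with the elementary bound $|\cos\rho t|,|\sin\rho t|\le e^{|\mathrm{Im}\,\rho|\pi}$ for $t\in[0,\pi]$ and Cauchy--Schwarz,
\[
\left|\int_0^\pi\mathscr{N}(t)\cos\rho t\,dt\right|\le \sqrt{\pi}\,\|\mathscr{N}\|_{L_2}\,e^{|\mathrm{Im}\,\rho|\pi}\le C(Q)\,e^{|\mathrm{Im}\,\rho|\pi},
\]
and analogously for the $\mathscr{N}_0$ integral.

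Plugging these into \eqref{intDel} and \eqref{intDel0}, I would obtain on $\Gamma_N$
\[
|\Delta(\rho)|\ge c_0(N-\tfrac12)e^{|\mathrm{Im}\,\rho|\pi}-C(Q)\,e^{|\mathrm{Im}\,\rho|\pi},\qquad
|\Delta_0(\rho)|\le C(Q)\,e^{|\mathrm{Im}\,\rho|\pi}.
\]
By enlarging $N(Q)$ from Lemma~\ref{lem:sdQ} if necessary so that $c_0(N-\tfrac12)-C(Q)\ge \tfrac{c_0}{2}N$, the denominator is bounded below by $\tfrac{c_0}{2}N\,e^{|\mathrm{Im}\,\rho|\pi}$; this also guarantees $\Delta(\rho)\ne 0$ on $\Gamma_N$, which is consistent with Lemma~\ref{lem:sdQ} (all zeros $\rho_n$ lie at distance $\ge 1/4$ from $\Gamma_N$). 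Forming the quotient in \eqref{fracM} cancels the exponential factors and gives $|M(\rho^2)|\le C(Q)/N\le C(Q)$.

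The only non-routine step is the invocation of the lower bound on $|\sin\rho\pi|$ on $\Gamma_N$; everything else reduces to standard exponential-type estimates and application of the previously established bounds on $\mathscr{N}$, $\mathscr{N}_0$, $\omega$, $\omega_0$.
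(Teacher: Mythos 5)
Your proposal is correct and follows essentially the same route the paper intends: the paper derives the corollary directly from the representations \eqref{intDel}, \eqref{intDel0}, \eqref{fracM} and the bounds $\|\mathscr N\|_{L_2}, \|\mathscr N_0\|_{L_2} \le C(Q)$, and your argument simply fills in the standard details (the lower bound $|\sin\rho\pi|\ge c_0 e^{|\mathrm{Im}\,\rho|\pi}$ on the half-integer circle, Cauchy--Schwarz for the integral terms, and enlarging $N(Q)$ so the leading term dominates). No gaps.
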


Note that the eigenvalues for $n \le N(Q)$ are not necessarily simple. Even if they are simple, the corresponding generalized weight numbers $\{ \al_n \}_{n \le N(Q)}$ are not uniformly bounded for $(q, h, H) \in P_Q$, which is shown by the following example.

\begin{example}[\hspace*{-3pt}\cite{Bond20}] \label{ex:unbound}
Let $ L = L(q, h, H)$ be a non-self-adjoint Sturm-Liouville problem with a double eigenvalue $\la_1 = \la_2$ and the Weyl function
$$
M(\la) \sim \frac{\al_2}{(\la - \la_1)^2} + \frac{\al_1}{\la - \la_1}
$$
in the neighbourhood of $\la_1$. For sufficiently small $\de > 0$, consider perturbed spectral data
\begin{gather*}
\la_1^{(\de)} := \la_1 + \de, \quad \la_2^{(\de)} := \la_1 - \de + c \de^2, \quad
\al_1^{(\de)} := \frac{a}{\de}, \quad \al_2^{(\de)} := -\frac{a}{\de}, \\
\la_n^{(\de)} := \la_n, \quad \al_n^{(\de)} := \al_n, \quad n \ge 3,
\end{gather*}
where $a := \frac{\al_2}{2}$, $c := \frac{\al_1}{a}$. According to Theorem~2.3 in \cite{Bond20}, for sufficiently small $\de > 0$, the solution $(q^{(\de)}, h^{(\de)}, H^{(\de)})$ for the inverse problem by the spectral data $\{ \la_n^{(\de)}, \al_n^{(\de)} \}_{n \ge 1}$ exists and fulfills the local stability estimate
$$
\| q^{(\de)} - q \|_{L_2} + |h^{(\de)} - h| + |H^{(\de)} - H| \le C(q,h,H) \de^2.
$$
Thus, the triples $(q^{(\de)}, h^{(\de)}, H^{(\de)})$ are bounded but the generalized weight numbers $\al_1^{(\de)}$ and $\al_2^{(\de)}$ are unbounded as $\de \to 0$. Such numerical examples are presented in \cite{BG21} for the quadratic Sturm-Liouville pencils.
\end{example}

In order to exclude the cases similar to Example~\ref{ex:unbound}, we impose the restriction $|\al_n| \le A$ for some $A > 0$ and consider the class $P_{Q, A}$ defined in Section~\ref{sec:main}. Put 
$$
\mathring{P}_{Q, A} := \{ (q,h,H) \in P_{Q, A} \colon \om = 0 \}.
$$

Consider $(q, h, H) \in \mathring{P}_{Q, A}$. Using the problem $L(q, h, H)$ and the model problem $\tilde L$, construct the operator $R(x)$ via \eqref{defD}, \eqref{defR1}, \eqref{defR2}, and \eqref{operR}.

\begin{lem} \label{lem:RQA}
For $(q, h, H) \in \mathring{P}_{Q, A}$, the following estimate is valid:
\begin{equation*}
\| R(x) \|_{m \to m} \le C(Q, A), \quad x \in [0,\pi].
\end{equation*}
\end{lem}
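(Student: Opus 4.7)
The plan is to establish the bound through direct estimation of the matrix entries $R_{ni,kj}(x)$ from their definitions \eqref{defR1}--\eqref{defR2}, in the same spirit as the analysis of $\tilde R_{ni,kj}(x)$ in Lemma~\ref{lem:estt}, but working with the actual potential $q$ instead of $\tilde q \equiv 0$. This is preferable to going through $R(x) = (I+\tilde R(x))^{-1}\tilde R(x)$, since we do not yet have uniform control of $(I+\tilde R(x))^{-1}$ on $\mathring P_{Q,A}$.

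First, I would extract uniform spectral information from $(q,h,H) \in \mathring P_{Q,A}$. Lemma~\ref{lem:sdQ} gives the rough inclusion $|\rho_n|\le N(Q)-\tfrac 34$ for $n\le N(Q)$ and $|\rho_n-\tilde\rho_n|\le \tfrac14$ for $n>N(Q)$, which forces $|\Im\rho_n|\le C(Q)$ for all $n$ and also $\|\{\varkappa_n\}\|_{l_2}+\|\{s_n\}_{n>N(Q)}\|_{l_2}\le C(Q)$. Combined with the hypothesis $|\alpha_n|\le A$, this yields $\|\{n\xi_n\}\|_{l_2}\le C(Q,A)$, where $\xi_n=\sqrt{|\hat\rho_n|^2+|\hat\alpha_n|^2}$.

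Second, I would derive uniform estimates on $\varphi(x,\rho)$ and on $D(x,\rho,\theta)$. From the Volterra integral equation for $\varphi$, one obtains the classical bounds
\begin{equation*}
|\varphi(x,\rho)|\le C(Q)\,e^{|\Im\rho|x},\qquad \bigl|\varphi(x,\rho)-\cos\rho x\bigr|\le \frac{C(Q)\,e^{|\Im\rho|x}}{|\rho|+1},
\end{equation*}
with an analogous bound on $\varphi'(x,\rho)$. Substituting into $D(x,\rho,\theta)=\int_0^x\varphi(t,\rho)\varphi(t,\theta)\,dt$, integration by parts (or comparison with $\tilde D$) yields, for $\rho$ and $\theta$ in a strip $|\Im\rho|,|\Im\theta|\le C(Q)$ with $\rho,\theta$ near $\tilde\rho_n,\tilde\rho_k$,
\begin{equation*}
|D(x,\rho,\theta)|\le \frac{C(Q)}{|n-k|+1},\qquad |\dot D(x,\rho,\theta)|\le C(Q),
\end{equation*}
parallel to the estimates for $\tilde D$ used in Lemma~\ref{lem:estt}. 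For $n$ or $k$ in the finite initial range $\le N(Q)$, compactness of the admissible disc for $\rho_n$ together with the bound $\|q\|_{L_2}\le Q$ make the constants depend only on $Q$.

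Third, I would unpack the matrix products in \eqref{defR1}--\eqref{defR2}. The left factor $T_n^{-1}$ converts each column of the middle matrix into a divided difference in the first argument of $D$, and the right factor $T_k$ brings out a factor $\hat\rho_k$ in the off-diagonal $j=0$ columns. By Schwarz's lemma applied to the entire function $\rho\mapsto D(x,\rho,\theta)$ on the disc of radius $C(Q)$ around $\tilde\rho_n$, these divided differences are controlled by $\sup|\dot D|\le C(Q)/(|n-k|+1)$; the case $\rho_{n0}=\rho_{n1}$ falls into \eqref{defR2} and is handled by the same bound since $\dot D$ enters directly. Combining with $|\alpha_k|\le A$ and $|\hat\rho_k|\le\xi_k$ gives, for all $n,k\ge 1$ and $i,j\in\{0,1\}$,
\begin{equation*}
|R_{ni,kj}(x)|\le \frac{C(Q,A)\,\xi_k}{|n-k|+1}.
\end{equation*}
Summing as in \eqref{estopR} and applying Cauchy--Schwarz,
\begin{equation*}
\|R(x)\|_{m\to m}\le C(Q,A)\sup_{n\ge 1}\sum_{k\ge 1}\frac{\xi_k}{|n-k|+1}\le C(Q,A)\,\|\{\xi_k\}\|_{l_2}\le C(Q,A).
\end{equation*}

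The main obstacle is ensuring that all constants in the estimates on $\varphi$, $D$, and $\dot D$ depend only on $Q$ and $A$, and not on individual features of $(q,h,H)$ or of the $\rho_n$ with small $n$, which may lie anywhere inside a disc of radius $N(Q)-\tfrac 34$. For $n>N(Q)$ this is standard, since $\rho_n$ lies within a fixed small distance of $\tilde\rho_n=n-1$; for $n\le N(Q)$ one appeals to compactness of the disc together with the uniform potential bound, which is the reason the a priori restriction $(q,h,H)\in P_Q$ (and not merely on the tail of the spectrum) is needed here.
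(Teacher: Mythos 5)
Your proposal follows essentially the same route as the paper: the paper also estimates $R(x)$ directly from its entries via \eqref{defR1}--\eqref{defR2}, using the Volterra equation for $\varphi$, the representation $D(x,\rho,\theta)=\bigl(\varphi(x,\rho)\varphi'(x,\theta)-\varphi'(x,\rho)\varphi(x,\theta)\bigr)/(\rho^2-\theta^2)$ to get the $1/(|n-k|+1)$ decay, Schwarz's lemma for the divided differences, Lemma~\ref{lem:sdQ} plus $|\al_n|\le A$ for $\|\{\xi_k\}\|_{l_2}\le C(Q,A)$, and the summation \eqref{estopR}. The only detail you gloss over is that the $j=1$ entries draw their factor of $\xi_k$ not from $\hat\rho_k$ but from $\hat\al_k$ together with the mixed second difference of $D$ in both arguments (bounded by $C|\hat\rho_n||\hat\rho_k|/(|n-k|+1)$), which the paper records explicitly; this is covered by the same Schwarz-lemma machinery you invoke.
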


\begin{proof}
The solution $\vv(x, \rho)$ satisfies the Volterra integral equation
$$
\vv(x, \rho) = \cos \rho x + h \frac{\sin \rho x}{\rho} + \frac{1}{\rho}\int_0^x \sin \rho(x - t) q(t) \vv(t, \rho) \, dt.
$$
Hence
\begin{equation} \label{estvvQ}
|\vv(x, \rho)| \le C(Q) \exp(|\mbox{Im}\,\rho|), \quad 
|\vv'(x, \rho)| \le C(Q) |\rho| \exp(|\mbox{Im} \, \rho|).
\end{equation}
In view of \eqref{eqv}, the function $D(x, \rho, \theta)$ defined by \eqref{defD} can be represented as follows:
$$
D(x, \rho, \theta) = \frac{\vv(x, \rho) \vv'(x, \theta) - \vv'(x, \rho) \vv(x, \theta)}{\rho^2 - \theta^2}.
$$
Relying \eqref{estvvQ}, we obtain the estimate
\begin{gather} \label{estD}
|D(x, \rho, \theta)| \le \frac{C(Q)}{|\rho - \theta| + 1}, \qquad
\text{for} \:\: \mbox{Re}\,\rho, \, \mbox{Re}\,\theta \ge 0, \quad |\mbox{Im}\,\rho|, |\mbox{Im}\,\theta| \le C(Q).
\end{gather}

Using \eqref{not}, \eqref{estD}, Lemma~\ref{lem:sdQ}, and Schwarz's Lemma, we get
\begin{gather*}
|D(x, \rho_{ni}, \rho_{kj})| \le \frac{C}{|n - k| + 1}, \quad
|D(x, \rho_{n0}, \rho_{kj}) - D(x, \rho_{n1}, \rho_{kj})| \le \frac{C |\hat \rho_n|}{|n-k| + 1}, \\
|D(x, \rho_{ni}, \rho_{k0}) - D(x, \rho_{ni}, \rho_{k1})| \le \frac{C|\hat \rho_k|}{|n -k| + 1}, \\
|D(x, \rho_{n0}, \rho_{k0}) - D(x, \rho_{n1}, \rho_{k0}) - D(x, \rho_{n0}, \rho_{k1}) + D(x, \rho_{n1}, \rho_{k1})| \le \frac{C |\hat \rho_n| |\hat \rho_k|}{|n - k| + 1},
\end{gather*}
where $C = C(Q)$, $n, k \ge 1$, $i,j = 0, 1$, $x \in [0, \pi]$.
The analogous estimates are valid for $\dot D$. Using these estimates together with $|\al_n| \le A$ and the definitions \eqref{defR1}--\eqref{defR2}, we deduce
\begin{equation} \label{estRQA}
|R_{ni,kj}(x)| \le \frac{C(Q, A) \xi_k}{|n - k| + 1}.
\end{equation}

In view of Lemma~\ref{lem:sdQ}, we get $\| \{ \xi_k \} \|_{l_2} \le C(Q, A)$.
Using \eqref{estopR} and \eqref{estRQA}, we obtain
$$
    \| R(x) \|_{m \to m} \le C(Q, A) \| \{ \xi_k \} \|_{l_2} \le C(Q, A),
$$
which yields the claim.
\end{proof}

Lemma~\ref{lem:RQA} together with \eqref{invR} imply
$$
\| (I + \tilde R(x))^{-1} \|_{m \to m} \le C(Q, A) \quad \text{for} \:\: (q, h, H) \in \mathring{P}_{Q, A}.
$$
Furthermore, it follows from Lemma~\ref{lem:sdQ} and $|\al_n| \le A$ that $\Omega \le C(Q, A)$. These arguments lead to the following lemma.

\begin{lem} \label{lem:mainQA}
For $(q, h, H) \in \mathring{P}_{Q, A}$, the spectral data $S$ belong to $\mathring{B}_{\Omega, K}$, where $\Omega = C(Q, A)$ and $K = C(Q, A)$.
\end{lem}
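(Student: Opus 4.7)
The proof naturally decomposes into two separate bounds to verify: first the bound $\|\{\xi_n\}\|_{l_2^1} \le \Omega = C(Q,A)$ placing $S$ in $\mathring{B}_\Omega$ (the condition $\omega = 0$ is inherited from $\mathring{P}_{Q,A}$), and second the operator bound $\|(I + \tilde R(x))^{-1}\|_{m\to m} \le K = C(Q,A)$. Both pieces are essentially corollaries of the material assembled just above, so the task is really to package the previous lemmas together correctly.

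For the $l_2^1$-bound on $\{\xi_n\}$, the plan is to split the estimate of $\xi_n = \sqrt{|\rho_n - \tilde\rho_n|^2 + |\al_n - \tilde\al_n|^2}$ into tails and a finite head using the threshold $N = N(Q)$ from Lemma~\ref{lem:sdQ}. Since $\omega = 0$, the asymptotics \eqref{asympt} give $n(\rho_n - \tilde\rho_n) = \varkappa_n$ for all $n \ge 1$, and for $n > N$ we have $n(\al_n - \tilde\al_n) = s_n$. Both sequences $\{\varkappa_n\}$ and $\{s_n\}_{n>N}$ lie in $l_2$ with norms bounded by $C(Q)$, again by Lemma~\ref{lem:sdQ}. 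For the finite set of indices $n \le N(Q)$, the generalized weight numbers satisfy $|\al_n| \le A$ by the definition of $P_{Q,A}$, so $n|\al_n - \tilde\al_n| \le N(Q)(A + 2/\pi)$, contributing a finite sum bounded by $C(Q,A)$. Assembling the three contributions yields $\|\{\xi_n\}\|_{l_2^1} \le C(Q,A)$, so we can set $\Omega := C(Q,A)$.

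For the operator bound, I would invoke Theorem~\ref{thm:main}, which gives the identity $(I + \tilde R(x))^{-1} = I - R(x)$ on the Banach space $m$, valid for every $x \in [0,\pi]$. Applying Lemma~\ref{lem:RQA}, the operator norm of $R(x)$ is bounded by $C(Q,A)$ uniformly in $x$, so $\|(I + \tilde R(x))^{-1}\|_{m\to m} \le 1 + C(Q,A) =: K$. Combining with the previous paragraph, $S$ satisfies both the quantitative inclusion in $\mathring{B}_\Omega$ and the inverse-operator bound defining $\mathring{B}_{\Omega,K}$, hence $S \in \mathring{B}_{\Omega,K}$ with $\Omega, K$ depending only on $Q$ and $A$.

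No part of this is an obstacle — the substantive content is already encoded in Lemmas~\ref{lem:sdQ} and \ref{lem:RQA} and in the explicit inversion formula \eqref{invR}. The only thing to be careful about is using the hypothesis $|\al_n| \le A$ to control the finitely many ``bad'' indices $n \le N(Q)$, since for these indices Lemma~\ref{lem:sdQ} offers no asymptotic bound on $\al_n - \tilde\al_n$; this is precisely where the extra hypothesis defining $P_{Q,A}$ (as opposed to $P_Q$) is needed, in line with Example~\ref{ex:unbound}.
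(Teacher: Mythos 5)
Your proposal is correct and follows essentially the same route as the paper: the paper likewise obtains the inverse-operator bound from Lemma~\ref{lem:RQA} combined with the identity \eqref{invR}, and obtains $\Omega \le C(Q,A)$ from Lemma~\ref{lem:sdQ} together with $|\al_n| \le A$. Your head/tail splitting of $\|\{\xi_n\}\|_{l_2^1}$ merely spells out in detail what the paper leaves implicit, and correctly identifies that the hypothesis $|\al_n|\le A$ is needed exactly for the finitely many indices $n \le N(Q)$.
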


Combining Lemma~\ref{lem:mainQA} with Theorem~\ref{thm:uni}, we arrive at the assertion of Theorem~\ref{thm:uniQA} for $\om^{(1)} = \om^{(2)} = 0$. One can easily pass to the general case by a shift $\mathring{q} := q - \tfrac{2}{\pi} \om$, $\mathring{\la}_n := \la_n - \tfrac{2}{\pi} \om$ as in the proof of Theorem~\ref{thm:V} in Section~\ref{sec:cond}. This concludes the proof of Theorem~\ref{thm:uniQA} on the conditional uniform stability of the inverse problem.

\section{Multiple eigenvalues} \label{sec:mult}

This section deals with the case when the eigenvalues of the non-self-adjoint Sturm-Liouville problem \eqref{eqv}--\eqref{bc} are not necessarily simple. Throughout this section, we mean that $\{ \al_n \}_{n \ge 1}$ are the coefficients of the expansion \eqref{Laurent} of the Weyl function and study Inverse Problem~\ref{ip:simp}. Specifically, we obtain the generalization of the modified main equation \eqref{main} to the case of multiple eigenvalues by developing the approach of \cite{Bond20}. A feature of our stability analysis is that the spectral data $S^{(1)}$ and $S^{(2)}$ of two different problems can have different multiplicities for some finite number of eigenvalues. Therefore, in the main equation and other relations, we separately form an integral part over the contour $\Gamma_N$ defined by \eqref{defGa}.
As a result, we generalize Theorem~\ref{thm:uni} to the case of multiple eigenvalues and prove Theorem~\ref{thm:uniQ} about the conditional uniform stability.

Fix $N \ge 0$ and introduce the set
$$
\mathcal S_N := \left\{ \{ \rho_n, \al_n \}_{n \ge 1} \in \mathcal S \colon \begin{array}{ll}
|\rho_n| \le N-\tfrac{3}{4}, & n \le N \\
|\rho_n| \ge N - \tfrac{1}{4}, \: \rho_n \ne \rho_k, & n,k > N 
\end{array}
\right\}.
$$

Clearly, $\mathcal S_0$ is the subset of $\mathcal S$ with simple eigenvalues, $\mathcal S = \bigcup_{N = 0}^{\infty} \mathcal S_N$ in view of the eigenvalue asymptotics \eqref{asympt}, and the spectral data $\{ \tilde \rho_n, \tilde \al_n \}_{n \ge 1}$ of $\tilde L$ belong to $\mathcal S_N$ for any $N \ge 0$. Consequently, for any $S \in \mathcal S_N$, the function $\hat M(\rho^2) := M(\rho^2) - \tilde M(\rho^2)$ is continuous on the contour 
$\Gamma_N$ defined by \eqref{defGa}.

Let us construct the main equation for the case of multiple eigenvalues. The relation \eqref{relvv} can be generalized as follows (see \cite{Bond20}):
\begin{align} \nonumber
\tilde \vv(x, \rho) = & \, \vv(x, \rho) + \frac{1}{2 \pi i} \oint_{\Gamma_N} \theta \hat M(\theta^2) \tilde D(x, \rho, \theta) \vv(x, \theta) \, d\theta \\ \label{relvv1} & + \sum_{k > N} \bigl(\al_{k0} \tilde D(x, \rho, \rho_{k0}) \vv_{k0}(x) - \al_{k1} \tilde D(x, \rho, \rho_{k1}) \vv_{k1}(x) \bigr).
\end{align}

Using \eqref{relvv1}, we deduce a linear equation in a special Banach space having continuous and discrete part.

Denote by $\mathfrak B_C = \mathfrak B_{N, C}$ the Banach space of continuous functions on $\Gamma_N$ with the norm
$$
\| f_C \|_{\mathfrak B_C} = \max_{\rho \in \Gamma_N} |f_C(\rho)|, \quad f_C \in \mathfrak B_C.
$$

Denote by $\mathfrak B_D = \mathfrak B_{N, D}$ the Banach space of bounded infinite sequences $f_D = [f_{ni}]_{n \ge N + 1, i = 0, 1}$ with the norm
$$
\| f_D \|_{\mathfrak B_D} = \sup_{n,i} |f_{ni}|, \quad f_D \in \mathfrak B_D.
$$

Here and below, the indices $C$ and $D$ mean ``continuous'' and ``discrete''. Although the spaces $\mathfrak B_C$ and $\mathfrak B_D$ depend on $N$, we assume the index $N$ to be fixed and omit it for brevity.

Define the Banach space
$$
\mathfrak B = \mathfrak B_N := \{ f = [f_C, f_D] \colon f_C \in \mathfrak B_C, f_D \in \mathfrak B_D \}, \quad \| f \|_{\mathfrak B} := \| f_C \|_{\mathfrak B_C} + \| f_D \|_{\mathfrak B_D}.
$$

For each $x \in [0,\pi]$, define the element $\psi(x) = [\psi_C(x), \psi_D(x)] \in \mathfrak B$ as follows:
$$
\psi_C(x, \rho) := \vv(x, \rho), \: \rho \in \Gamma_N, \quad 
\psi_D(x) = [\psi_{ni}(x)]_{n > N, \, i = 0, 1}, 
$$
where $\psi_{ni}(x)$ were defined in \eqref{defpsi}. Furthermore, introduce the linear bounded operator $R(x) \colon \mathfrak B \to \mathfrak B$ as follows:
\begin{align} \label{defRmult}
& R(x) = \begin{bmatrix}
            R_{CC}(x) & R_{CD}(x) \\
            R_{DC}(x) & R_{DD}(x)
      \end{bmatrix}, \quad
\begin{array}{ll}
R_{CC}(x) \colon \mathfrak B_C \to \mathfrak B_C, & R_{CD}(x) \colon \mathfrak B_C \to \mathfrak B_D, \\
R_{DC}(x) \colon \mathfrak B_D \to \mathfrak B_C, & R_{DD}(x) \colon \mathfrak B_D \to \mathfrak B_D,
\end{array} \\ \nonumber
& R(x) f = \bigl[ R_{CC}(x) f_C + R_{CD}(x) f_D, R_{DC}(x) f_C + R_{DD}(x) f_D \bigr], \quad f = [f_C, f_D] \in \mathfrak B, \\ \label{defRCC}
& (R_{CC}(x) f_C)(\rho) = r(x, \rho) := \frac{1}{2 \pi i} \oint_{\Gamma_N} \theta \hat M(\mu) D(x, \rho, \theta) f_C(\theta) \, d\theta,  \\ \nonumber
& (R_{CD}(x) f_D)(\rho) = \sum_{k > N} \bigl[ \al_{k0}  D(x, \rho, \rho_{k0}), \, -\al_{k1} D(x, \rho, \rho_{k1}) \bigr] T_k \begin{bmatrix} f_{k0} \\ f_{k1} \end{bmatrix}.
\end{align}
Next, the sequence $z_{ni}(x) := (R_{DC}(x) f_D)_{ni}$, $n > N$, $i = 0, 1$, is defined analogously to \eqref{defpsi}, where $r(x, \rho)$ was defined in \eqref{defRCC}:
$$
\begin{bmatrix}
z_{n0}(x) \\
z_{n1}(x)
\end{bmatrix} = T_n^{-1} \begin{bmatrix}
r(x, \rho_{n0}) \\
r(x, \rho_{n1})
\end{bmatrix}, \: \rho_{n0} \ne \rho_{n1}, \quad
\begin{bmatrix}
z_{n0}(x) \\
z_{n1}(x)
\end{bmatrix} = 
\begin{bmatrix}
\dot r(x, \rho_{n1}) \\
r(x, \rho_{n1})
\end{bmatrix}, \: \rho_{n0} = \rho_{n1}.
$$
The operator $R_{DD}(x)$ is defined analogously to \eqref{operR} by using the elements $R_{ni,kj}(x)$ given by \eqref{defR1} and \eqref{defR2}:
$$
(R_{DD}(x) f_D)_{ni} = \sum_{k > N, \, j = 0, 1} R_{ni,kj}(x) f_{kj}, \quad n > N, \, i = 0, 1.
$$
Similarly to $\psi(x)$ and $R(x)$, we define $\tilde \psi(x)$ and $\tilde R(x)$. 

In terms of the new definitions, the modified main equation \eqref{main} holds as a linear equation in $\mathfrak B$ for each fixed $x \in [0,\pi]$. Now, $I$ denotes the identity operator in $\mathfrak B$. Similarly to Theorem~\ref{thm:nsc}, we get the following necessary and sufficient conditions for solvability of Inverse Problem~\ref{ip:simp}.

\begin{thm}
For complex numbers $\{ \rho_n, \al_n \}_{n \ge 1} \in \mathcal S_N$ to be the spectral data of a problem $L = L(q, h, H)$, the existence of a bounded inverse operator $(I + \tilde R(x))^{-1}$ in $\mathfrak B$ for each fixed $x \in [0,\pi]$ is necessary and sufficient.
\end{thm}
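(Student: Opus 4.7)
The plan is to follow the same two-step strategy as in the proof of Theorem~\ref{thm:nsc} for simple eigenvalues, but now in the larger Banach space $\mathfrak{B}$ that combines the continuous part on $\Gamma_N$ with the discrete part indexed by $n > N$. The fundamental ingredient is the generalization of Proposition~\ref{prop:contour} to the multiple-eigenvalue setting: relation \eqref{relvv1} is already stated in the excerpt, and one needs an analogous identity for $D(x,\rho,\theta) - \tilde D(x,\rho,\theta)$ obtained by contour integration around $\Gamma_N$ plus a sum over $n > N$, parallel to \eqref{relD}. These two identities are the precise counterparts that, after the change of variables \eqref{defpsi}--\eqref{defR2} in the discrete indices and restriction of the $\rho$-variable to $\Gamma_N$ in the continuous block, collapse into the two operator relations
\begin{equation*}
\tilde\psi(x) = (I+\tilde R(x))\psi(x), \qquad R(x) - \tilde R(x) + \tilde R(x) R(x) = 0
\end{equation*}
in $\mathfrak{B}$, exactly as in Theorem~\ref{thm:main}. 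The second relation gives $(I+\tilde R(x))(I-R(x)) = I$, so the necessity follows: if $\{\rho_n,\al_n\}$ come from some $L(q,h,H)$, then the operator $R(x)$ can be constructed and $(I+\tilde R(x))^{-1}=I-R(x)$ is bounded.

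For sufficiency, assume the bounded inverse $(I+\tilde R(x))^{-1}$ exists for each $x\in[0,\pi]$. Then the main equation $(I+\tilde R(x))\psi(x) = \tilde\psi(x)$ has a unique solution $\psi(x) = [\psi_C(x,\cdot),\psi_D(x)]\in\mathfrak{B}$. From $\psi_D(x)=\{\psi_{ni}(x)\}_{n>N,i=0,1}$ one reconstructs $\vv_{ni}(x)$ via \eqref{vvpsi}, while $\psi_C(x,\rho)$ plays the role of $\vv(x,\rho)$ on $\Gamma_N$. One then defines
\begin{equation*}
\eps(x) := \frac{1}{2\pi i}\oint_{\Gamma_N}\theta\hat M(\theta^2)\vv(x,\theta)\tilde\vv(x,\theta)\,d\theta + \sum_{k>N}\bigl(\al_{k0}\vv_{k0}(x)\tilde\vv_{k0}(x) - \al_{k1}\vv_{k1}(x)\tilde\vv_{k1}(x)\bigr),
\end{equation*}
which is the natural extension of \eqref{defeps}. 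Using the asymptotics \eqref{asympt} together with the estimates in $\mathfrak{B}$ analogous to \eqref{estpsiR} (with the integral term bounded via standard estimates of $D$ on the bounded contour $\Gamma_N$), one verifies that the series converges in $W_2^1[0,\pi]$ and that the continuous part contributes a smooth piece. Then $q := -2\eps'$, $h := -\eps(0)$, $H := \eps(\pi)$, in direct analogy with Proposition~\ref{prop:recqhH}.

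The final and most delicate step is to check that the triple $(q,h,H)$ so constructed actually has $\{\rho_n,\al_n\}$ as its spectral data in the sense of the Laurent expansion \eqref{Laurent} of the Weyl function. This requires showing that $\psi_C(x,\rho)$ coincides with the genuine solution $\vv(x,\rho)$ of \eqref{eqv} with the reconstructed $q$ at initial data $(1,h)$ for $\rho\in\Gamma_N$, and that $\psi_{ni}(x)$ produce the correct eigenfunctions via \eqref{vvpsi} for $n>N$. This verification proceeds by plugging the reconstructed objects back into the main equation and exploiting its unique solvability, combined with the identification of $M(\la)$ through its jump across $\Gamma_N$ and its residues outside. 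I expect this identification part to be the main obstacle, since one must carefully handle multiple eigenvalues inside $\Gamma_N$, where $M$ can have higher-order poles and the coefficients $\{\al_n\}_{n\le N}$ enter only through the contour integral, not as residues of a simple Weyl function. The argument parallels the one used in \cite{Bond20} for recovery from the continuous Weyl function on $\Gamma_N$, combined with the simple-eigenvalue reasoning of Theorem~\ref{thm:nsc} for $n>N$; together they complete the proof.
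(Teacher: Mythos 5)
Your proposal is correct and follows essentially the same route as the paper, which states this theorem without a written proof beyond the remark that it is obtained ``similarly to Theorem~\ref{thm:nsc}'' using the main equation in $\mathfrak B$, the inverse formula $(I + \tilde R(x))^{-1} = I - R(x)$, and the reconstruction formula \eqref{defepsint}. In fact your sketch supplies more detail than the paper does at this point, correctly identifying the needed analog of \eqref{relD} with a contour-integral part over $\Gamma_N$ and the final verification that the reconstructed triple has the prescribed data (a step the paper defers to \cite{FY01} and \cite{Bond20}).
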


The solution of Inverse Problem~\ref{ip:simp} can be found by formulas \eqref{recqhH}, where
\begin{equation} \label{defepsint}
\eps(x) := \frac{1}{2 \pi i} \oint_{\Gamma_N} \theta \hat M(\theta) \psi(x, \theta) \tilde \psi(x, \theta) \, d\theta + \sum_{k = N + 1}^{\infty} \bigl(\al_{k0} \vv_{k0}(x) \tilde \vv_{k0}(x) - \al_{k1} \vv_{k1}(x) \tilde \vv_{k1}(x)\bigr),
\end{equation}
and $\vv_{kj}(x)$ are determined by \eqref{vvpsi}.

Proceed to the uniform stability of Inverse Problem~\ref{ip:simp} with multiple eigenvalues.
For $N \ge 0$, $\Omega > 0$, and $K > 0$, define the following analog of $\mathring{B}_{\Omega, K}$:
$$
\mathring{B}_{N, \Omega, K} := \bigl\{ S \in \mathcal S_N \colon d_N(S, \tilde S) \le \Omega, \, \om = 0, \, \| (I + \tilde R(x))^{-1} \|_{\mathfrak B \to \mathfrak B} \le K \bigr\}. 
$$

The following theorem on the uniform stability is obtained similarly to Theorem~\ref{thm:uni}.

\begin{thm} \label{thm:unimult}
Suppose that $S^{(1)}, S^{(2)} \in \mathring{B}_{N, \Omega, K}$. Then, the corresponding solutions $(q^{(1)}, h^{(1)}, H^{(1)})$ and $(q^{(2)}, h^{(2)}, H^{(2)})$ of the inverse problem satisfy the uniform estimate \eqref{unimult} holds with $C = C(N, \Omega, K)$.
\end{thm}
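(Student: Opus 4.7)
The plan is to adapt the proof scheme of Theorem~\ref{thm:uni} from Section~\ref{sec:ustab} to the combined Banach space $\mathfrak B = \mathfrak B_C \oplus \mathfrak B_D$, using the generalized distance $d_N$ in place of $d$. Put $Z := d_N(S^{(1)}, S^{(2)})$. The two tail pieces of $Z$ (over $n > N$) will control the discrete components exactly as in Section~\ref{sec:ustab}, while the first piece $\max_{\rho \in \Gamma_N}|M^{(1)}(\rho^2) - M^{(2)}(\rho^2)|$ is the device that lets us compare problems with possibly different eigenvalue multiplicities for $n \le N$ without touching individual $\rho_n,\alpha_n$.

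First I would prove analogs of Lemmas~\ref{lem:estt} and~\ref{lem:estdift} for the operator $\tilde R(x) \in \mathfrak B \to \mathfrak B$. For the discrete block $\tilde R_{DD}$ and the vector $\tilde \psi_D$, the estimates are literally those of Section~\ref{sec:ustab} restricted to indices $n > N$; the constants now depend on $\Omega$ through the tail norm bound coming from \eqref{defdN}. For the continuous block, I note that on the fixed compact contour $\Gamma_N$ the model Weyl function $\tilde M(\rho^2)$ is bounded, so by $d_N(S,\tilde S)\le \Omega$ we have $\|\hat M^{(s)}\|_{\mathfrak B_C} \le C(N,\Omega)$, and the kernels $\tilde D(x,\rho,\theta)$, $D(x,\rho,\theta)$ are uniformly bounded for $\rho,\theta \in \Gamma_N$. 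This gives $\|\tilde R_{CC}(x)\|, \|\tilde R_{CD}(x)\|, \|\tilde R_{DC}(x)\| \le C(N,\Omega)$. For the differences, the key identity is
$$
\hat M^{(1)}(\rho^2) - \hat M^{(2)}(\rho^2) = M^{(1)}(\rho^2) - M^{(2)}(\rho^2),
$$
whose sup on $\Gamma_N$ is, by definition, bounded by $Z$; combined with the Schwarz-lemma arguments already used in Lemma~\ref{lem:estdift} for the discrete tail, this yields $\|\tilde R^{(1)}(x) - \tilde R^{(2)}(x)\|_{\mathfrak B\to\mathfrak B} \le C(N,\Omega) Z$ and similarly $\|\tilde \psi^{(1)}(x) - \tilde \psi^{(2)}(x)\|_{\mathfrak B}\le C(N,\Omega) Z$.

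Next I would exploit the hypothesis $\|(I + \tilde R^{(s)}(x))^{-1}\|_{\mathfrak B\to \mathfrak B} \le K$ together with the resolvent identity
$$
R^{(1)}(x) - R^{(2)}(x) = (I + \tilde R^{(2)}(x))^{-1}\bigl(\tilde R^{(1)}(x) - \tilde R^{(2)}(x)\bigr)(I + \tilde R^{(1)}(x))^{-1},
$$
to obtain $\|R^{(1)}(x) - R^{(2)}(x)\|_{\mathfrak B\to\mathfrak B} \le C(N,\Omega,K)Z$, and then, from $\psi^{(s)}(x) = (I - R^{(s)}(x))\tilde\psi^{(s)}(x)$, the corresponding estimate $\|\psi^{(1)}(x) - \psi^{(2)}(x)\|_{\mathfrak B} \le C(N,\Omega,K)Z$. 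The same passage to $\vv_{kj}(x)$ via \eqref{vvpsi} and to $\vv(x,\rho)$, $\rho\in\Gamma_N$, through the continuous component of $\psi$ is standard; derivatives in $x$ are handled by differentiating the kernels, exactly as in the derivations of \eqref{estdifRp} and of the second halves of \eqref{estdifpsi}--\eqref{estdifvv}.

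Finally I would plug these estimates into the representation \eqref{defepsint} of $\eps(x)$. The contour integral splits into terms of the form $\hat M^{(1)}\psi^{(1)}\tilde\psi^{(1)} - \hat M^{(2)}\psi^{(2)}\tilde\psi^{(2)}$; grouping as in the proof of Lemma~\ref{lem:esidifeps} and using the uniform bounds on the factors together with the difference estimates yields $\|\eps^{(1)} - \eps^{(2)}\|_{W_2^1} \le C(N,\Omega,K)Z$ for the integral part. The tail series over $n > N$ is exactly the object treated in Lemma~\ref{lem:esidifeps}, so it contributes the same bound. Formula \eqref{recqhH} then produces \eqref{unimult}.

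The main obstacle I expect is in Step~1: producing the $W_2^1$-type estimates for the \emph{derivatives} with respect to $x$ of the continuous block $\tilde R_{CC}(x)$, $R_{CC}(x)$ and of the contour part of $\eps(x)$ in a form that mirrors the $n\zeta_n + Z$ bounds of Lemma~\ref{lem:estdif}. This needs that $\rho \partial_x \tilde D(x,\rho,\theta)$ remains uniformly bounded for $\rho,\theta \in \Gamma_N$, which is true since $\Gamma_N$ has fixed radius, but the combinatorics of how the factor $\theta$ inside the integral \eqref{defRCC} couples to the estimates on $\hat M^{(1)}-\hat M^{(2)}$ requires care, and the analog of \eqref{smeps1} for the continuous portion of $\eps'$ must be derived from scratch since there is no Parseval structure on $\Gamma_N$. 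Once this is in place, the rest of the argument is a routine transcription of Sections~\ref{sec:ubound}--\ref{sec:ustab} into the $\mathfrak B$-framework.
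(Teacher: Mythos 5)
Your proposal is correct and follows essentially the same route as the paper, which itself omits the detailed proof and merely notes that the discrete part repeats Sections~\ref{sec:ubound}--\ref{sec:ustab} while the continuous part is easy because the contour $\Gamma_N$ is bounded. Your additional remarks on the $x$-derivatives of the continuous block and the absence of a Parseval structure on $\Gamma_N$ address exactly the details the paper glosses over, and they are resolved as you indicate since the contour part contributes only finitely many smooth, uniformly bounded terms.
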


The proof Theorem~\ref{thm:unimult} is based on the estimates for $\tilde \psi^{(1)} - \tilde \psi^{(2)}$, $\tilde R^{(1)} - \tilde R^{(2)}$, $R^{(1)} - R^{(2)}$, etc. However, for continuous parts, obtaining such estimates is easy, since the contour $\Gamma_N$ is bounded. For discrete parts, the arguments are exactly the same as in Sections~\ref{sec:ubound} and~\ref{sec:ustab}. So, we omit the proof of Theorem~\ref{thm:unimult}.

Theorem~\ref{thm:unimult} implies Theorem~\ref{thm:uniQ} on the conditional uniform stability for the inverse problem with multiple eigenvalues.

\begin{proof}[Proof of Theorem~\ref{thm:uniQ}]
Consider $(q, h, H) \in \mathring{P}_Q$. By virtue of Lemma~\ref{lem:sdQ}, there exists $N = N(Q)$ such that the spectral data $S$ of $L(q, h, H)$ belong to $\mathcal S_N$. Construct the operators $R(x)$ and $\tilde R(x)$ in the Banach space $\mathfrak B = \mathfrak B_N$ by \eqref{defRmult} and subsequent formulas.
Similarly to Lemma~\ref{lem:RQA}, we prove that 
$$
\| R(x) \|_{\mathfrak B \to \mathfrak B} \le C(Q), \quad x \in [0,\pi]. 
$$
At the same time, the relation analogous to \eqref{invR} holds:
$$
(I + \tilde R(x))^{-1} = I - R(x).
$$
Hence 
\begin{equation} \label{estRtmult}
\| (I + \tilde R(x))^{-1} \|_{\mathfrak B \to \mathfrak B} \le C(Q), \quad x \in [0,\pi], \quad (q, h, H) \in \mathring{P}_Q.
\end{equation}

Next, Lemma~\ref{lem:sdQ} and Corollary~\ref{cor:estM} directly imply that
\begin{equation} \label{estCQ1}
\sqrt{\sum_{k > N} |\varkappa_n - \tilde \varkappa_n|^2 + |s_n - \tilde s_n|^2} \le C(Q)
\end{equation}
and
\begin{equation} \label{estCQ2}
\max_{\rho \in \Gamma_N} |M(\rho^2) - \tilde M(\rho^2)| \le C(Q),
\end{equation}
respectively. Combining \eqref{defdN}, \eqref{estCQ1}, and \eqref{estCQ2}, we get 
\begin{equation} \label{estdN}
|d_N(S, \tilde S)| \le C(Q), \quad (q, h, H) \in \mathring{P}_Q.
\end{equation}

Using Lemma~\ref{lem:sdQ}, \eqref{estRtmult}, and \eqref{estdN}, we conclude that spectral data $S$ corresponding to any $(q, h, H) \in \mathring{P}_Q$ belong to $\mathring{B}_{N, \Omega, K}$, where $N$, $\Omega$, and $K$ depend only on $Q$. Applying Theorem~\ref{thm:unimult}, we obtain the estimate \eqref{unimult} with $C = C(Q)$ and $N = N(Q)$ for the case $\om^{(1)} = \om^{(2)} = 0$. Finally, we pass to the general case by a shift, which concludes the proof.
\end{proof}

\begin{remark}
Theorems~\ref{thm:unimult} and \ref{thm:uniQ}, as well as our construction of the modified main equation remain valid if $M(\la)$ in the definition of $d_N$ and under the contour integrals is replaced by the rational function $M_N(\la)$, which is constructed by the finite part of the spectral data $\{ \la_n, \al_n \}_{n \le N}$:
$$
M_N(\la) := \sum_{n \in \mathcal I_N} \sum_{\nu = 0}^{m_n - 1} \frac{\al_{n + \nu}}{(\la- \la_n)^{\nu + 1}}, \quad
\mathcal I_N := \mathcal I \cap \{ 1, 2, \dots, N \}.
$$
Indeed, the remainders $\sum_{k > N} \frac{\al_k}{\la - \la_k}$ and $\sum_{k > N} \frac{\tilde \al_k}{\la - \tilde \la_k}$ do not influence on the contour integrals in \eqref{relvv1}, \eqref{defRCC}, and \eqref{defepsint}. Furthermore, the asymptotics \eqref{asympt} and the estimates \eqref{estCQ1} imply the estimate similar to \eqref{estCQ2} for the remainders. Therefore, the analogous estimate holds for $|M_N^{(1)}(\rho^2) - M_N^{(2)}(\rho^2)|$. Thus, $M^{(1)} - M^{(2)}$ can be replaced by $M_N^{(1)} - M_N^{(2)}$ in $d_N$ in Theorem~\ref{thm:uniQ}. Then, the data $\{ \rho_n, \al_n \}$ for $n \le N$ and for $n > N$ will participate in $d_N$ separately.
However, the result with $M^{(1)}-M^{(2)}$ on the contour $\Gamma_N$ is more convenient for the application in the next section.
\end{remark}

\section{Inverse problem by Cauchy data} \label{sec:Cauchy}

In this section, we study Inverse Problem~\ref{ip:Cauchy} by the Cauchy data $\mathscr C := \{ \mathscr N, \mathscr N_0, \om, \om_0 \}$ and prove Theorem~\ref{thm:uniCauchy} on the uniform stability.

The Cauchy data $\mathscr C$ can be treated as an element of the linear space $\mathcal C := L_2(0,\pi) \times L_2(0,\pi) \times \mathbb C \times \mathbb C$ with the norm
$$
\| \mathscr C \|_{\mathcal C} := \| \mathscr N \|_{L_2} + \| \mathscr N_0 \|_{L_2} + |\om| + |\om_0|.
$$

Let $\mathscr C = \{ \mathscr N, \mathscr N_0, \om, \om_0 \}$ be an arbitrary element of $\mathcal C$, not necessarily related to some problem $L(q, h, H)$. Define the functions $\Delta(\rho)$ and $\Delta_0(\rho)$ by formulas \eqref{intDel} and \eqref{intDel0}, respectively. Denote by $\{ \pm \rho_n \}_{n \ge 1}$ the zeros of $\Delta(\rho)$ (counting with multiplicities) so that $\arg \rho_n \in \left[ -\frac{\pi}{2}, \tfrac{\pi}{2}\right)$. Put $M(\rho^2) := -\dfrac{\Delta_0(\rho)}{\Delta(\rho)}$ and denote by $\{ \al_n \}_{n \ge 1}$ the coefficients of the expansion \eqref{Laurent} for $M(\la)$. Thus, we have defined the ``spectral data'' $\{ \rho_n, \al_n \}_{n \ge 1} \in \mathcal S$ associated with $\mathscr C \in \mathcal C$.

\begin{lem} \label{lem:CSD}
For every $W > 0$, there exists $N = N(W)$ such that, for any
$\mathscr C^{(s)} \in \mathcal C$ satisfying $\| \mathscr C^{(s)} \|_{\mathcal C} \le W$ for $s = 1, 2$. the corresponding data $S^{(s)} = \{ \rho_n^{(s)}, \al_n^{(s)} \}_{n \ge 1}$, $s = 1, 2$, fulfill the uniform estimate
\begin{equation} \label{estCSD}
d_N(S^{(1)}, S^{(2)}) \le C(W) \| \mathscr C^{(1)} - \mathscr C^{(2)} \|_{\mathcal C}.
\end{equation}
Thus, the mapping $\mathscr C \mapsto S$ is Lipschitz continuous on the ball $\| \mathscr S \|_{\mathcal C} \le W$. 
\end{lem}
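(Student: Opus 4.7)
The plan is to first secure uniform localization and nondegeneracy of the zeros of $\Delta^{(s)}$ via a Rouch\'e argument, and then bound the two terms of $d_N(S^{(1)}, S^{(2)})$ separately using \eqref{intDel}--\eqref{intDel0}. From \eqref{intDel} one has $|\Delta^{(s)}(\rho) + \rho\sin\rho\pi| \le C(W) e^{\pi|\mbox{Im}\,\rho|}$ uniformly in the class $\|\mathscr C^{(s)}\|_{\mathcal C} \le W$, while $|\rho\sin\rho\pi| \ge c|\rho| e^{\pi|\mbox{Im}\,\rho|}$ both on the contour $\Gamma_N$ and on the circles $|\rho - (n - 1)| = \tfrac{1}{4}$ for $n > N$. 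Choosing $N = N(W)$ large enough, Rouch\'e's theorem gives, for both $s = 1, 2$: $\Delta^{(s)}$ has exactly $N$ zeros (with multiplicity) inside $\Gamma_N$; each $\rho_n^{(s)}$ with $n > N$ is simple and lies in $|\rho - (n - 1)| < \tfrac{1}{4}$; and the uniform lower bounds $|\Delta^{(s)}(\rho)| \ge c_W(|\rho| + 1) e^{\pi|\mbox{Im}\,\rho|}$ on $\Gamma_N$ and $|\dot\Delta^{(s)}(\rho_n^{(s)})| \ge c_W n$ for $n > N$ hold, so in particular $S^{(s)} \in \mathcal S_N$.

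For the first term of $d_N$, writing
\[
M^{(1)} - M^{(2)} = \frac{\Delta_0^{(2)}(\Delta^{(1)} - \Delta^{(2)}) - \Delta^{(2)}(\Delta_0^{(1)} - \Delta_0^{(2)})}{\Delta^{(1)}\Delta^{(2)}}
\]
and applying \eqref{intDel}--\eqref{intDel0} to each factor in the numerator yields $|\Delta^{(1)} - \Delta^{(2)}| \le Ce^{\pi|\mbox{Im}\,\rho|}\|\mathscr C^{(1)} - \mathscr C^{(2)}\|_{\mathcal C}$, $|\Delta_0^{(1)} - \Delta_0^{(2)}| \le (C/|\rho|)e^{\pi|\mbox{Im}\,\rho|}\|\mathscr C^{(1)} - \mathscr C^{(2)}\|_{\mathcal C}$, together with $|\Delta^{(s)}| \le C(|\rho| + 1)e^{\pi|\mbox{Im}\,\rho|}$ and $|\Delta_0^{(s)}| \le Ce^{\pi|\mbox{Im}\,\rho|}$. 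Dividing by the lower bound on $\Delta^{(1)}\Delta^{(2)}$ from the previous step gives $\max_{\rho \in \Gamma_N}|M^{(1)}(\rho^2) - M^{(2)}(\rho^2)| \le C(W)\|\mathscr C^{(1)} - \mathscr C^{(2)}\|_{\mathcal C}$.

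For the asymptotic part of $d_N$, Taylor expansion at $\rho_n^{(1)}$ gives $\rho_n^{(2)} - \rho_n^{(1)} = -\Delta^{(1)}(\rho_n^{(2)})/\dot\Delta^{(1)}(\rho_n^{(1)}) + O(n|\rho_n^{(2)} - \rho_n^{(1)}|^2)$, while $\Delta^{(1)}(\rho_n^{(2)}) = -(\om^{(2)} - \om^{(1)})\cos\rho_n^{(2)}\pi - \int_0^\pi (\mathscr N^{(2)} - \mathscr N^{(1)})(t)\cos\rho_n^{(2)}t\, dt$. Substituting $\cos\rho_n^{(2)}\pi = (-1)^{n-1} + O(n^{-2})$ and $\dot\Delta^{(1)}(\rho_n^{(1)}) = -(-1)^{n-1}\pi n + O(1)$ isolates the non-summable contribution $(\om^{(1)} - \om^{(2)})/\pi$ to $n(\rho_n^{(1)} - \rho_n^{(2)})$, which cancels exactly against the term $-\om^{(s)}/\pi$ from the definition $\varkappa_n^{(s)} = n(\rho_n^{(s)} - (n - 1)) - \om^{(s)}/\pi$. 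The upshot is
\[
\varkappa_n^{(1)} - \varkappa_n^{(2)} = \frac{(-1)^{n-1}}{\pi}\int_0^\pi (\mathscr N^{(1)} - \mathscr N^{(2)})(t)\cos\rho_n^{(2)}t\, dt + r_n,
\]
with $\|\{r_n\}_{n > N}\|_{l_2} \le C(W)\|\mathscr C^{(1)} - \mathscr C^{(2)}\|_{\mathcal C}$. The $l_2$ estimate for the integral terms then follows from the Bari basis property of $\{\cos\rho_n^{(2)}t\}_{n \ge 1}$ in $L_2(0, \pi)$, a Kadets--type consequence of $|\rho_n^{(2)} - (n - 1)| \to 0$ with a rate uniform in the class $\|\mathscr C^{(2)}\|_{\mathcal C} \le W$. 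The parallel computation based on the residue formula $\al_n = -2\rho_n \Delta_0(\rho_n)/\dot\Delta(\rho_n)$ yields the analogous $l_2$-estimate for $\{s_n^{(1)} - s_n^{(2)}\}_{n > N}$, and combining these with the trivial $|\om^{(1)} - \om^{(2)}| \le \|\mathscr C^{(1)} - \mathscr C^{(2)}\|_{\mathcal C}$ gives \eqref{estCSD}.

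The main obstacle will be this last $l_2$-bookkeeping. In the expansions of $n(\rho_n^{(1)} - \rho_n^{(2)})$ and $n(\al_n^{(1)} - \al_n^{(2)})$, several groups of terms contribute at order $1$ rather than $o(1)$, and one has to verify that the non-summable pieces coming from $\om^{(1)} - \om^{(2)}$ and $\om_0^{(1)} - \om_0^{(2)}$ are precisely those absorbed into the leading asymptotic constants $\om/\pi$ and $2/\pi$ in \eqref{asympt}. Only after this cancellation do the remaining pieces reduce to evaluations of $\mathscr N^{(1)} - \mathscr N^{(2)}$ and $\mathscr N_0^{(1)} - \mathscr N_0^{(2)}$ on the perturbed trigonometric systems $\{\cos\rho_n^{(2)}t\}$, $\{\sin\rho_n^{(2)}t\}$, which can be controlled via the Bari/Kadets basis argument using the uniform decay of $\rho_n^{(s)} - (n - 1)$.
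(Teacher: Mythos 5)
Your proposal is correct and follows the same overall skeleton as the paper for the contour part: both bound $\max_{\Gamma_N}|M^{(1)}-M^{(2)}|$ by writing the difference of the quotients $-\Delta_0^{(s)}/\Delta^{(s)}$ and combining uniform upper bounds on $\Delta^{(s)}$, $\Delta_0^{(s)}$ and their differences with a uniform lower bound $|\Delta^{(s)}|\ge c(W)$ on $\Gamma_N$ for $N=N(W)$ large. Where you genuinely diverge is in the treatment of the tail terms $\{\varkappa_n,s_n\}_{n>N}$ and of the case $\om\ne 0$. The paper first proves an auxiliary shift lemma (Lemma~\ref{lem:auxC}): the substitution $\rho^2\mapsto\rho^2+\tfrac{2}{\pi}\om^{(s)}$ removes the $\om^{(s)}\cos\rho\pi$ term from $\Delta^{(s)}$ at the cost of replacing $\mathscr N^{(s)}$ by a new kernel $\mathscr M^{(s)}$, and the map $(\mathscr N,\om)\mapsto\mathscr M$ is shown to be Lipschitz; the main estimates are then carried out only for $\om^{(1)}=\om^{(2)}=0$, with the $l_2$ tail bounds essentially imported from the local-perturbation analysis of [XB23] after checking uniformity of the constants. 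You instead keep $\om^{(s)}$ in place and track the cancellation of the non-summable contribution $(\om^{(1)}-\om^{(2)})/\pi$ against the $\om/\pi$ term in the definition of $\varkappa_n$ directly in the Taylor expansion of $\rho_n^{(2)}-\rho_n^{(1)}$, then control the residual coefficients $\int_0^\pi(\mathscr N^{(1)}-\mathscr N^{(2)})(t)\cos\rho_n^{(2)}t\,dt$ by a uniform Bessel-sequence bound for $\{\cos\rho_n^{(2)}t\}_{n>N}$ (and analogously for $s_n$ via the residue formula; your sign $\al_n=-2\rho_n\Delta_0(\rho_n)/\dot\Delta(\rho_n)$ is the consistent one, given $M=-\Delta_0/\Delta$). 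Your route is self-contained and avoids both the auxiliary kernel $\mathscr M$ and the appeal to [XB23], at the price of the bookkeeping you acknowledge; the paper's shift isolates the $\om$-dependence once and reuses the same cancellation mechanism inside Lemma~\ref{lem:auxC} (where the same Bessel-type estimates \eqref{estdifN1}--\eqref{estdifN2} appear). One small imprecision: the full system $\{\cos\rho_n^{(2)}t\}_{n\ge1}$ need not be a Bari basis when low-index zeros collide or leave the real axis, but you only need the uniform Bessel upper bound for $n>N(W)$, which your Kadets-type argument does deliver.
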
 

For proving Lemma~\ref{lem:CSD}, it is convenient first to apply the shift that removes $\omega$. So, we need the following auxiliary lemma.

\begin{lem} \label{lem:auxC}
For $s = 1, 2$, let 
\begin{equation} \label{intDelNs}
\Delta^{(s)}(\rho) := -\rho \sin \rho \pi + \om^{(s)} \cos \rho \pi + \int_0^{\pi} \mathscr N^{(s)}(t) \cos \rho t \, dt,
\end{equation}
where $\mathscr N^{(s)} \in L_2(0,\pi)$ and $\om^{(s)} \in \mathbb C$ satisfy $\| \mathscr N^{(s)} \|_{L_2} + |\om^{(s)}| \le W$. Then
\begin{equation} \label{intDelMs}
\Delta^{(s)}\left(\sqrt{\rho^2 + \tfrac{2}{\pi} \om^{(s)}}\right) = -\rho \sin \rho \pi + \int_0^{\pi} \mathscr M^{(s)}(t) \cos \rho t \, dt,
\end{equation}
where $\mathscr M^{(s)} \in L_2(0,\pi)$, $\| \mathscr M^{(s)} \|_{L_2} \le C(W)$, and
\begin{equation} \label{estdifM}
\| \mathscr M^{(1)} - \mathscr M^{(2)} \|_{L_2} \le C(W) \bigl( \| \mathscr N^{(1)} - \mathscr N^{(2)} \|_{L_2} + |\om^{(1)} - \om^{(2)}| \bigr).
\end{equation}
Thus, the mapping $(\mathscr N, \om) \mapsto \mathscr M$ is Lipschitz continuous on the ball $\| \mathscr N \|_{L_2} + |\om| \le W$.
\end{lem}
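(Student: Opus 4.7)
The plan is to prove the lemma via the Paley--Wiener theorem. For each $s\in\{1,2\}$, set $c^{(s)}:=\frac{2}{\pi}\omega^{(s)}$ and $\mu^{(s)}(\rho):=\sqrt{\rho^2+c^{(s)}}$; the square root is ambiguous, but since $\Delta^{(s)}(\mu)$ is even in $\mu$ (inspection of \eqref{intDelNs}), the composition
\[
G^{(s)}(\rho):=\Delta^{(s)}\!\bigl(\mu^{(s)}(\rho)\bigr)+\rho\sin\rho\pi
\]
depends only on $\mu^2=\rho^2+c^{(s)}$ and is unambiguously defined, even, and entire in $\rho$. The target identity \eqref{intDelMs} is precisely the statement that $G^{(s)}(\rho)=\int_0^\pi\mathscr M^{(s)}(t)\cos\rho t\,dt$ for some $\mathscr M^{(s)}\in L_2(0,\pi)$. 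Once this representation is secured, Parseval transports $L_2(\mathbb R)$-control of $G^{(s)}$ and of $G^{(1)}-G^{(2)}$ into $L_2(0,\pi)$-control of $\mathscr M^{(s)}$ and of $\mathscr M^{(1)}-\mathscr M^{(2)}$, delivering both $\|\mathscr M^{(s)}\|_{L_2}\le C(W)$ and the estimate \eqref{estdifM}. I therefore plan to verify (a) that $G^{(s)}$ has exponential type at most $\pi$, and (b) that the map $(\mathscr N^{(s)},\omega^{(s)})\mapsto G^{(s)}|_{\mathbb R}$ is bounded and Lipschitz into $L_2(\mathbb R)$.

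The exponential-type bound will follow from $|\Delta^{(s)}(\mu)|\le C(W)(1+|\mu|)e^{\pi|\mbox{Im}\,\mu|}$ (immediate from \eqref{intDelNs}) combined with the expansion $|\mbox{Im}\,\mu^{(s)}|\le|\mbox{Im}\,\rho|+C(W)/|\rho|$ and $|\mu^{(s)}|\le|\rho|+C(W)$ for $|\rho|$ large. The heart of the argument is the $L_2(\mathbb R)$-bound on $G^{(s)}$. Using $\mu^{(s)}=\rho+\frac{c^{(s)}}{2\rho}+O(|\rho|^{-3})$ for real $|\rho|\to\infty$, together with $\omega^{(s)}=\frac{\pi c^{(s)}}{2}$, one obtains the key cancellation
\[
-\mu^{(s)}\sin\mu^{(s)}\pi+\omega^{(s)}\cos\mu^{(s)}\pi=-\rho\sin\rho\pi+O_W(1/|\rho|),
\]
in which the $\omega^{(s)}\cos\rho\pi$ contributions from the two summands cancel exactly. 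Since $|\cos\mu^{(s)}t-\cos\rho t|\le C(W)/|\rho|$ uniformly in $t\in[0,\pi]$, this gives
\[
G^{(s)}(\rho)=\int_0^\pi\mathscr N^{(s)}(t)\cos\rho t\,dt+O_W(1/|\rho|),\qquad\rho\in\mathbb R,\ |\rho|\to\infty,
\]
so Plancherel (plus a trivial bound on compact $\rho$-sets) yields $\|G^{(s)}\|_{L_2(\mathbb R)}\le C(W)$, and Paley--Wiener then furnishes the desired $\mathscr M^{(s)}\in L_2(0,\pi)$.

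For the Lipschitz estimate, I will split $G^{(1)}-G^{(2)}$ into the bracket $[-\mu^{(1)}\sin\mu^{(1)}\pi+\omega^{(1)}\cos\mu^{(1)}\pi]-[-\mu^{(2)}\sin\mu^{(2)}\pi+\omega^{(2)}\cos\mu^{(2)}\pi]$ and the integral difference $\int\mathscr N^{(1)}\cos\mu^{(1)}t\,dt-\int\mathscr N^{(2)}\cos\mu^{(2)}t\,dt$. The bracket is of size $O_W(|\omega^{(1)}-\omega^{(2)}|/|\rho|)$ on $\mathbb R$, because the explicit derivative
\[
\partial_\omega\bigl[-\mu\sin\mu\pi+\omega\cos\mu\pi\bigr]=-\bigl(1/\pi+\omega\bigr)\sin\mu\pi/\mu
\]
is $O_W(1/|\rho|)$ for real $\rho$ (the cancellation survives differentiation in $\omega$). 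The integral difference breaks into $\int(\mathscr N^{(1)}-\mathscr N^{(2)})\cos\rho t\,dt$ (handled by Plancherel) plus two remainders of size $C(W)(\|\mathscr N^{(1)}-\mathscr N^{(2)}\|_{L_2}+|\omega^{(1)}-\omega^{(2)}|)/|\rho|$ arising from $|\cos\mu^{(s)}t-\cos\rho t|$ and from $|\mu^{(1)}-\mu^{(2)}|=|c^{(1)}-c^{(2)}|/|\mu^{(1)}+\mu^{(2)}|$. Summing and applying Parseval yields \eqref{estdifM}. The main technical obstacle throughout will be the careful bookkeeping of the leading $\omega\cos\rho\pi$ cancellation: its failure would leave behind a non-$L_2(\mathbb R)$ remainder of the form $\omega^{(s)}\cos\rho\pi$ (respectively $(\omega^{(1)}-\omega^{(2)})\cos\rho\pi$ in the difference), which would destroy the Paley--Wiener step.
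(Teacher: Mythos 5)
Your proposal is correct. For the representation \eqref{intDelMs} and the bound $\|\mathscr M^{(s)}\|_{L_2}\le C(W)$ you follow essentially the same route as the paper (which only states that $f(\rho):=\Delta^{(s)}(\sqrt{\rho^2+\tfrac{2}{\pi}\om^{(s)}})+\rho\sin\rho\pi$ is an even Paley--Wiener function with $\|f\|_{L_2(\mathbb R)}\le C(W)$ and invokes Paley--Wiener); your explicit verification of the cancellation $-\mu^{(s)}\sin\mu^{(s)}\pi+\om^{(s)}\cos\mu^{(s)}\pi=-\rho\sin\rho\pi+O_W(1/|\rho|)$, which hinges on $\om^{(s)}=\tfrac{\pi}{2}c^{(s)}$, is exactly the content of that claim. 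For the Lipschitz estimate \eqref{estdifM}, however, you take a genuinely different route: the paper works discretely, writing the cosine Fourier coefficients as samples $\hat{\mathscr M}^{(s)}_n=\Delta^{(s)}(\rho_n(\om^{(s)}))$ at the shifted points $\rho_n(\om)=\sqrt{n^2+\tfrac{2}{\pi}\om}$, estimating the differences term by term (its \eqref{estpn}--\eqref{estdifN2}) and summing via Bessel's inequality, whereas you estimate $\|G^{(1)}-G^{(2)}\|_{L_2(\mathbb R)}$ directly on the line and conclude by Plancherel. The two are equivalent in substance --- both rest on the same cancellation of the $(\om^{(1)}-\om^{(2)})\cos\rho\pi$ term --- but your exact derivative identity $\partial_\om\bigl[-\mu\sin\mu\pi+\om\cos\mu\pi\bigr]=-(1/\pi+\om)\sin\mu\pi/\mu$ (which I have checked) makes that cancellation transparent and uniform in one stroke, replacing the paper's asymptotic expansion of $p_n(\om^{(1)})-p_n(\om^{(2)})$; the paper's discrete version has the advantage of matching the $\ell_2$-sequence machinery used elsewhere in the text. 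Two small points to make explicit when writing this up: the branch-independence of $\sqrt{\rho^2+c^{(s)}}$ via evenness of $\Delta^{(s)}$ (you do note this), and a separate, routine Lipschitz bound for $G^{(1)}-G^{(2)}$ on a compact neighbourhood of $\rho=0$, where the $O(1/|\rho|)$ bookkeeping is not available but analyticity gives everything.
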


\begin{proof}
Using \eqref{intDelNs}, one can easily show that $f(\rho) := \Delta^{(s)}(\sqrt{\rho^2 + \tfrac{2}{\pi} \om^{(s)}}) + \rho \sin \rho \pi$ is an even Paley-Wiener function with the norm $\| f \|_{L_2(\mathbb R)} \le C(W)$. This readily implies the representation \eqref{intDelMs} and the estimate $\| \mathscr M^{(s)} \|_{L_2} \le C(W)$.

Proceed to obtaining the estimate \eqref{estdifM}. Define the cosine Fourier coefficients
$$
\hat{\mathscr M}^{(s)}_n := \int_0^{\pi} \mathscr M^{(s)}(t) \cos n t \, dt, \quad n \ge 0, \quad s = 1, 2.
$$
By virtue of \eqref{intDelMs}, we get
$\hat{\mathscr M}^{(s)}_n = \Delta^{(s)}(\rho_n(\om^{(s)}))$, where
\begin{equation} \label{rhonom}
\rho_n(\om) := \sqrt{n + \frac{2}{\pi}\om} = n + \frac{\om}{\pi n} \left(1 + O(n^{-2}) \right).
\end{equation}
Denote the main part of \eqref{intDelNs} as 
$$
p_n(\om) := -\rho_n(\om) \sin \rho_n(\om)\pi + \om \cos \rho_n(\om)\pi. 
$$
Using \eqref{rhonom}, we obtain
\begin{equation} \label{estpn}
\bigl| p_n(\om^{(1)}) - p_n(\om^{(2)})\bigr| \le \frac{C(W) |\om^{(1)} - \om^{(2)}|}{(n+1)^2}, \quad n \ge 0.
\end{equation}
Next, we have
\begin{multline} \label{difN12}
\int_0^{\pi} \mathscr N^{(1)}(t) \cos \rho_n(\om^{(1)}) t \, dt - 
\int_0^{\pi} \mathscr N^{(2)}(t) \cos \rho_n(\om^{(2)}) t \, dt \\ =
\int_0^{\pi} (\mathscr N^{(1)} - \mathscr N^{(2)})(t) \cos \rho_n(\om^{(1)}) t \, dt + \int_0^{\pi} \mathscr N^{(2)}(t) \bigl(\cos \rho_n(\om^{(1)}) t - \cos \rho_n(\om^{(2)}) t\bigr) \, dt
\end{multline}
Calculations show that
\begin{multline} \label{estdifN1}
\left| \int_0^{\pi} (\mathscr N^{(1)} - \mathscr N^{(2)})(t) \cos \rho_n(\om^{(1)}) t \, dt \right| \le |\hat{\mathscr N}_n^{(1)} - \hat{\mathscr N}_n^{(2)}| \\ + C(W)\left( \frac{|\hat{\mathscr S}_n^{(1)} - \hat{\mathscr S}_n^{(2)}|}{n + 1} + \frac{\| \mathscr N^{(1)} - \mathscr N^{(2)}\|_{L_2}}{(n + 1)^2} \right),
\end{multline}
\begin{equation} \label{estdifN2}
\left| \int_0^{\pi} \mathscr N^{(2)}(t) \bigl(\cos \rho_n(\om^{(1)}) t - \cos \rho_n(\om^{(2)}) t\bigr) \, dt \right| \le \frac{|\om^{(1)} - \om^{(2)}| w_n}{n + 1},
\end{equation}
where
$$
\hat{\mathscr N}^{(s)}_n := \int_0^{\pi} \mathscr N^{(s)}(t) \cos n t \, dt, \quad
\hat{\mathscr S}^{(s)}_n := \int_0^{\pi} \mathscr N^{(s)}(t) \, t \sin n t \, dt, \quad \| \{ w_n \} \|_{l_2} \le C(W).
$$

Combining \eqref{intDelNs}, \eqref{estpn}, \eqref{difN12}, \eqref{estdifN1}, and \eqref{estdifN2}, we get
$$
\bigl| \hat{\mathscr M}_n^{(1)} - \hat{\mathscr M}_n^{(2)} \bigr| \le |\hat{\mathscr N}_n^{(1)} - \hat{\mathscr N}_n^{(2)}| + C(W)\left( \frac{|\hat{\mathscr S}_n^{(1)} - \hat{\mathscr S}_n^{(2)}|}{n + 1} + \frac{\| \mathscr N^{(1)} - \mathscr N^{(2)}\|_{L_2}}{(n + 1)^2} \right) + \frac{|\om^{(1)} - \om^{(2)}| w_n}{n + 1}.
$$
Applying Bessel's inequality, we obtain
$$
\left( \sum_{n = 0}^{\infty} \bigl| \hat{\mathscr M}_n^{(1)} - \hat{\mathscr M}_n^{(2)} \bigr|^2 \right)^{1/2} \le C(W) \bigl( \| \mathscr N^{(1)} - \mathscr N^{(2)} \|_{L_2} + |\om^{(1)} - \om^{(2)}|\bigr),
$$
which implies \eqref{estdifM}.
\end{proof}

\begin{proof}[Proof of Lemma~\ref{lem:CSD}]
Let $W > 0$ be fixed, collections $\mathscr C^{(s)}$ satisfy the hypothesis of Lemma~\ref{lem:CSD}, and $\om^{(s)} = 0$, $s = 1, 2$. Construct the functions $\Delta^{(s)}(\rho)$ by \eqref{intDelNs} and
\begin{equation} \label{intDel0Ns}
\Delta_0^{(s)}(\rho) := \cos \rho \pi + \om_0^{(s)} \frac{\sin \rho \pi}{\rho} + \frac{1}{\rho} \int_0^{\pi} \mathscr N^{(s)}_0(t) \sin \rho t \, dt, \quad s = 1, 2.
\end{equation}
Since $\| \mathscr C^{(s)} \|_{\mathcal C} \le W$, then the relations \eqref{intDelNs} and \eqref{intDel0Ns} imply the following estimates for sufficiently large $N = N(W)$, $\rho \in \Gamma_N$:
\begin{gather*}
0 < c(W) \le |\Delta^{(s)}(\rho)| \le C(W), \quad |\Delta_0^{(s)}(\rho)| \le C(W), \quad s = 1, 2, \\
\bigl|\Delta^{(1)}(\rho) - \Delta^{(2)}(\rho)\bigr| \le C(W) \| \mathscr N^{(1)} - \mathscr N^{(2)} \|_{L_2}, \\
\bigl|\Delta_0^{(1)}(\rho) - \Delta_0^{(2)}(\rho)\bigr| \le C(W) \bigl( \| \mathscr N_0^{(1)} - \mathscr N_0^{(2)} \|_{L_2} + |\om^{(1)} - \om^{(2)}|\bigr).
\end{gather*}
Then
\begin{multline} \label{CSDM}
\bigl| M^{(1)}(\rho^2) - M^{(2)}(\rho^2)\bigr| = \left| \frac{\Delta_0^{(1)}(\rho)}{\Delta^{(1)}(\rho)} - \frac{\Delta_0^{(2)}(\rho)}{\Delta^{(2)}(\rho)}\right| \\ \le
\frac{|(\Delta^{(1)}_0 - \Delta^{(2)}_0)(\rho)| |\Delta^{(2)}(\rho)| + |\Delta^{(2)}_0(\rho)| |(\Delta^{(1)} - \Delta^{(2)})(\rho)|}{|\Delta^{(1)}(\rho)| |\Delta^{(2)}(\rho)|} \le C(W) \| \mathscr C^{(1)} - \mathscr C^{(2)} \|_{\mathcal C}, \quad \rho \in \Gamma_N.
\end{multline}

Next, one can easily see from the representation
\begin{equation} \label{Deltashort}
\Delta^{(s)}(\rho) = -\rho \sin \rho \pi + \int_0^{\pi} \mathscr N^{(s)}(t) \cos \rho t \, dt
\end{equation}
that the zeros $\{ \pm \rho_n^{(s)} \}_{n \ge 1}$ of $\Delta^{(s)}(\rho)$ satisfy the asymptotics
$$
\rho_n^{(s)} = n - 1 + \frac{\varkappa_n^{(s)}}{n}, \quad \{ \varkappa_n^{(s)} \} \in l_2, \quad n \ge 1, \quad s = 1, 2.
$$
Furthermore, the index $N = N(W)$ can be chosen so large that $\{ \rho_n^{(s)} \}_{n > N}$ are simple and
\begin{equation} \label{CSDrho}
\left( \sum_{n = N + 1}^{\infty} \bigl| \varkappa_n^{(1)} - \varkappa_n^{(2)} \bigr|^2 \right)^{1/2} \le C(W) \| \mathscr N^{(1)} - \mathscr N^{(2)} \|_{L_2}.
\end{equation}

Using \eqref{alphapole}, \eqref{intDel0Ns}, and \eqref{Deltashort}, we show that
$$
\al_n^{(i)} = \frac{2}{\pi} + \frac{s_n^{(i)}}{n}, \quad \{ s_n^{(i)} \} \in l_2, \quad n \ge 1, \quad i = 1, 2,
$$
and obtain the estimate
\begin{equation} \label{CSDal}
\left( \sum_{n = N+1}^{\infty} \bigl| s_n^{(1)} - s_n^{(2)} \bigr|^2 \right)^{1/2} \le C(W) \| \mathscr C^{(1)} - \mathscr C^{(2)} \|_{\mathcal C}.
\end{equation}

Note that the estimates similar to \eqref{CSDM}, \eqref{CSDrho}, and \eqref{CSDal} have been obtained in~\cite{XB23} for the case of a local perturbation of Cauchy data. The constant $C$ in those estimates in \cite{XB23} depends on one of the Cauchy data sets (e.g., on $\mathscr C^{(1)}$). Nevertheless, following the proof of Lemma 2.1 in \cite{XB23}, one can see that the same constant $C = C(W)$ can be chosen for all $\mathscr C^{(1)}$ in the ball $\| \mathscr C^{(1)} \|_{\mathcal C} \le W$.

Combining the estimates \eqref{CSDM}, \eqref{CSDrho}, and \eqref{CSDal} together with the definition \eqref{defdN} of the distance $d_N$, we get the desired estimate \eqref{estCSD} for the case $\om^{(1)} = \om^{(2)} = 0$. Applying Lemma~\ref{lem:auxC}, we pass to the general case.
\end{proof}

Now, we are ready to prove the main theorem of this section.

\begin{proof}[Proof of Theorem~\ref{thm:uniCauchy}]
For the Cauchy data $\mathscr C = \{ \mathscr N, \mathscr N_0, \om, \om_0 \}$ of the problem $L$ with $(q, h, H) \in P_Q$, we have $\| \mathscr C \|_{\mathcal C} \le C(Q)$ according to the proof of Lemma~\ref{lem:sdQ}. Therefore, combining Theorem~\ref{thm:uniQ} and Lemma~\ref{lem:CSD} yields the claim:
$$
\| q^{(1)} - q^{(2)} \|_{L_2} + |h^{(1)} - h^{(2)}| + |H^{(1)} - H^{(2)}| \le C(Q) d_N(S^{(1)}, S^{(2)}) \le C(Q) \| \mathscr C^{(1)} - \mathscr C^{(2)} \|_{\mathcal C},
$$
for $N = N(Q)$ and any $(q^{(s)}, h^{(s)}, H^{(s)}) \in P_Q$, $s= 1, 2$.
\end{proof}

\medskip

{\bf Funding.} This work was supported by Grant 24-71-10003 of the Russian Science Foundation, https://rscf.ru/en/project/24-71-10003/.

\medskip

\medskip

\noindent Natalia Pavlovna Bondarenko \\

\noindent 1. Department of Mechanics and Mathematics, Saratov State University, 
Astrakhanskaya 83, Saratov 410012, Russia, \\

\noindent 2. S.M. Nikolskii Mathematical Institute, Peoples' Friendship University of Russia (RUDN University), 
6 Miklukho-Maklaya Street, Moscow, 117198, Russia, \\

\noindent 3. Moscow Center of Fundamental and Applied Mathematics, Lomonosov Moscow State University, Moscow 119991, Russia.

\noindent e-mail: {\it bondarenkonp@sgu.ru}

\end{document}